\numberwithin{equation}{section}
\numberwithin{figure}{section}
\theoremstyle{plain}
\newtheorem{thm}{\protect\theoremname}[section]
  \theoremstyle{plain}
  \newtheorem{prop}[thm]{\protect\propositionname}
  \theoremstyle{definition}
  \newtheorem{defn}[thm]{\protect\definitionname}
  \theoremstyle{plain}
  \newtheorem{cor}[thm]{\protect\corollaryname}
  \theoremstyle{plain}
  \newtheorem{lem}[thm]{\protect\lemmaname}
  \theoremstyle{remark}
  \newtheorem{rem}[thm]{\protect\remarkname}
  \theoremstyle{definition}
  \theoremstyle{plain}
  \newtheorem*{lem*}{\protect\lemmaname}
  \providecommand{\corollaryname}{Corollary}
  \providecommand{\definitionname}{Definition}
  \providecommand{\examplename}{Example}
  \providecommand{\lemmaname}{Lemma}
  \providecommand{\propositionname}{Proposition}
  \providecommand{\remarkname}{Remark}
\providecommand{\theoremname}{Theorem}
\begin{document}
\title{Convex Bodies Associated to Tensor Norms}

\author{Maite Fern{\'a}ndez-Unzueta$^1$\and Luisa F. Higueras-Monta{\~n}o$^2$}
\address{Centro de Investigaci\'on en Matem\'aticas (Cimat), A.P. 402 Guanajuato, Gto., M\'exico}
\email{maite@cimat.mx$^1$, fher@cimat.mx$^2$ }
\subjclass[2010]{46M05, 52A21, 46N10, 15A69}
\keywords{Convex body, Tensor norm, Minkowski space, Banach-Mazur distance, Tensor product of convex sets, Linear mappings on tensor spaces}
\maketitle
\begin{abstract}
We determine when a convex body in $\mathbb{R}^d$ is the closed unit ball of a reasonable crossnorm on $\mathbb{R}^{d_1}\otimes\cdots\otimes\mathbb{R}^{d_l},$  $d=d_1\cdots d_l.$ We call these convex bodies ``tensorial bodies''. We prove that, among them, the only ellipsoids are the closed unit balls of Hilbert tensor products of Euclidean spaces. It is also proved that linear isomorphisms on $\mathbb{R}^{d_1}\otimes\cdots \otimes \mathbb{R}^{d_l}$ preserving decomposable vectors map tensorial bodies into tensorial bodies. This leads us to define a Banach-Mazur type distance between them, and to prove that there exists a Banach-Mazur type compactum of tensorial bodies.
\end{abstract}

\section{Introduction}

Tensor products of finite dimensional spaces play a fundamental role in a wide range of problems in applications. They arise, among others, in quantum computing \cite{karolpawelsanpera}, in theoretical computer science \cite{efremenko}, and in the use of tensor decompositions to extract and explain properties from data arrays (see \cite{Kolda2009} and the references therein). This fact has motivated the current research into their geometric, topologic and algebraic properties, as can be seen in \cite{DeSilva2008,tomczakgiladiwerner,Hackbusch2014,Landsberg2011a,Vannieuwenhoven2014}. 
   
   On the other hand, there is a well developed theory of norms defined on tensor products of Banach spaces. This theory was established by A. Grothendieck \cite{Grothendieck1956}. It has had a great impact in the Geometry of Banach spaces, as can be traced in \cite{defantfloret,Diestel2008,DiestelJarchTong,pisier,Ryan2013,Tomczak-Jaegermann1989}. Indeed,  its impact extends even beyond  Mathematical Analysis. By way of example, we refer to the survey \cite{KhotNaor} where applications of Grothendieck's theorem (usually called Grothendieck's inequality) to the design of polynomial time algorithms for computing approximate solutions of NP problems are detailed. In the other direction, we refer to \cite{BrietNoirRegev} where results  from  theoretical computer science are used to prove that for some indices $p_1,p_2,p_3,$ the space $\ell_{p_1}\hat{\otimes}_{\pi}\ell_{p_2}\hat{\otimes}_{\pi}\ell_{p_3}$  fails to have non trivial cotype. The interested reader can consult \cite{AcinGisinTorner,Pitowsky,Szarek2010} for further information about tensor products of Banach spaces and its applications.

\medskip

In the case of finite dimensions, the Minkowski functional   enables the use of convex geometry to study finite dimensional Banach spaces  (also known as Minkowski spaces) and vice versa. With it, a  bijection between norms and $0$-symmetric convex bodies in $\mathbb{R}^d$ is established. 
This result was  originally due to  H. Minkowski \cite{Minkowski1927}, and nowadays is a standard result (see \cite[Remark 1.7.7]{Schneider1993} for a modern statement).
 Thus, in the context of  tensors of finite dimensional spaces, a natural question to ask is if it is possible to  determine the  convex bodies that are the unit balls of  tensor normed spaces,  as well as $0$-symmetric convex bodies are  the  unit balls of  normed spaces. The main result of this paper, Theorem \ref{thm:caracterizacion bola de norma razon cruza},  provides  an affirmative  answer to this question.

This work, as  well as \cite{Aubrun2006},  lies  between  the  theory of  tensor norms  and  convex geometry. In \cite{Aubrun2006}, G. Aubrun and S. Szarek establish connections between tensor norms on finite dimensions and convex geometry to estimate the volume of the set of separable mixed quantum states.

We now briefly expose our results.
Bringing together the theory of tensor norms and convex geometry, we immediately obtain  that the the convex bodies $Q\subset\mathbb{R}^{d}$ that are the unit ball of a reasonable crossnorm  defined on $\mathbb{R}^d= \mathbb{R}^{d_1}\otimes\cdots \otimes \mathbb{R}^{d_l},$ $d=d_1\cdots d_l,$
 are those $Q\subset \mathbb{R}^{d_1}\otimes\cdots \otimes \mathbb{R}^{d_l}$ for which  there exist norms $\left\Vert\cdot\right\Vert_i$ on $\mathbb{R}^{d_i}$ such that
\begin{equation}
\label{eq:crossnorms introduction}
B_{\otimes_{\pi}\left(\mathbb{R}^{d_i},\left\Vert\cdot\right\Vert_i\right)}\subseteq Q \subseteq B_{\otimes_{\epsilon}\left(\mathbb{R}^{d_i},\left\Vert\cdot\right\Vert_i\right)},
\end{equation}
where $B$ denotes the closed unit ball of the projective and the injective tensor norms. 
In Proposition \ref{prop:razonable cruzada version convexa}, we prove that  (\ref{eq:crossnorms introduction}) is equivalent to say that
\begin{equation}
\label{eq: inclusion convex bodies razonables intro}
Q_{1}\otimes_{\pi}\cdots\otimes_{\pi}Q_{l}\subseteq Q\subseteq Q_{1}\otimes_{\epsilon}\cdots\otimes_{\epsilon}Q_{l},
\end{equation}
where for each $i$, $Q_i\subset \mathbb{R}^{d_i}$ is the closed unit ball of $\left(\mathbb{R}^{d_i},\left\Vert\cdot\right\Vert_i\right)$,   and $\otimes_{\pi}, \otimes_{\epsilon}$ are the projective and the injective tensor products of  $0$-symmetric convex bodies, defined by  G. Aubrun and S. Szarek in \cite{Aubrun2006,Aubrun2017}.

 Our main result (Theorem  \ref{thm:caracterizacion bola de norma razon cruza}) lies much deeper than Proposition \ref{prop:razonable cruzada version convexa}. There, we establish the conditions on $Q\subset \mathbb{R}^{d}= \mathbb{R}^{d_1}\otimes\cdots \otimes \mathbb{R}^{d_l}$ that guarantee the existence of the convex sets $Q_i\subset \mathbb{R}^{d_i}$ in (\ref{eq: inclusion convex bodies razonables intro})
and give an explicit description of them.
\newline

Proposition \ref{prop:razonable cruzada version convexa} and Theorem \ref{thm:caracterizacion bola de norma razon cruza} allow us to introduce ``tensorial bodies'': a $0$-symmetric convex body $Q\subset\mathbb{R}^{d_1}\otimes\cdots \otimes \mathbb{R}^{d_l}$ is  a \textbf{tensorial body in} $\mathbf{\mathbb{R}^{d_1}\otimes\cdots \otimes \mathbb{R}^{d_l}}$ if there exist $0$-symmetric convex bodies $Q_{i}\subset\mathbb{R}^{d_{i}},$  $i=1,...,l$ such that (\ref{eq: inclusion convex bodies razonables intro}) holds (see Definition  \ref{def:tensorial convex bodies}).

Corollary \ref{cor:equivalence of tensor body} shows that the reasonable crossnorms on $\mathbb{R}^{d_1}\otimes\cdots \otimes \mathbb{R}^{d_l}$ are the image of the tensorial bodies in $\mathbb{R}^{d_1}\otimes\cdots \otimes \mathbb{R}^{d_l}$, under the bijection given by the Minkowski functional.
With it, we can go further with the study of this class of convex sets. We  prove that the polar set of a  tensorial body is a tensorial body  and  prove  the stability of tensorial bodies by  multiplying for positive scalars (Proposition \ref{prop:tensorial bodies closed for polars scalar mult}). We also show that the convex bodies $Q_i$ in (\ref{eq: inclusion convex bodies razonables intro}) are essentially unique (see Proposition \ref{prop:unicidad de secciones}).

  In  Theorem \ref{thm:GLsigma preserves tensorial bodies} we prove that the subgroup of linear isomorphisms on $\mathbb{R}^{d_1}\otimes\cdots \otimes \mathbb{R}^{d_l}$ preserving decomposable vectors also preserve tensorial bodies. We denote this group by $GL_{\otimes}\left(\otimes_{i=1}^{l}\mathbb{R}^{d_{i}}\right)$.
By means of $GL_{\otimes}\left(\otimes_{i=1}^{l}\mathbb{R}^{d_{i}}\right)$,  we start a geometric study of the set of tensorial bodies,  defining the following distance:
\begin{equation*}
\delta_{\otimes}^{BM}\left(P,Q\right):=\inf\left\{ \lambda\geq1:Q\subseteq TP\subseteq\lambda Q\text{ for }T\in GL_{\otimes}\left(\otimes_{i=1}^{l}\mathbb{R}^{d_{i}}\right)\right\},
\end{equation*}
where $P,Q\subset\mathbb{R}^{d_1}\otimes\cdots \otimes \mathbb{R}^{d_l}$ are tensorial bodies. We  call $\delta_{\otimes}^{BM}$ the tensorial Banach-Mazur distance.  We use it to show that there is a Banach-Mazur type compactum of tensorial bodies in $\mathbb{R}^{d_1}\otimes\cdots \otimes \mathbb{R}^{d_l}$ (Theorem \ref{thm:BMSIGMA ES COMPACTO}).

 Finally, we apply the ideas developed through the paper to   prove that the only ellipsoids that are also  tensorial bodies in $\mathbb{R}^{d_1}\otimes\cdots \otimes \mathbb{R}^{d_l}$ are the unit balls of the Hilbert tensor product of Euclidean spaces (see Theorem \ref{thm:caracterizacion elipsoides varios productos} and  Corollary \ref{cor:representation of ellipsoids 1}).
 \newline

 The paper is organized as follows: in Subection 1.1, we introduce the notation and
basic results that we will use throughout the paper. In Section 2, we recall the main properties of the projective and the injective tensor product of $0$-symmetric convex bodies. 
In Section 3, we  prove Theorem \ref{thm:caracterizacion bola de norma razon cruza} and establish the fundamental properties of tensorial bodies. There, we exhibit examples of tensorial bodies and show  that not every $0$-symmetric convex body is of this type.  In Subsection 3.2,  we establish the relation between $GL_{\otimes}\left(\otimes_{i=1}^{l}\mathbb{R}^{d_{i}}\right)$ and the set of tensorial bodies, and settle the fundamental properties of $\delta_{\otimes}^{BM}.$ We finish this section by giving upper bounds for $\delta_{\otimes}^{BM}$ (Corollary \ref{cor:diametro sigmaCompactum}). In Section 4, we characterize the ellipsoids in the class of tensorial bodies (Theorem \ref{thm:caracterizacion elipsoides varios productos} and Corollary \ref{cor:representation of ellipsoids 1}).
\newline

We like to point out that Theorems \ref{thm:caracterizacion bola de norma razon cruza} and \ref{thm:GLsigma preserves tensorial bodies} remain true in $\mathbb{C}^d=\otimes_{i=1}^l\mathbb{C}^{d_i},$ $d=d_1\cdots d_l,$ when circled convex bodies (i.e. a convex body $Q\subset\mathbb{C}^d$ s.t. $e^{i\theta}Q=Q$) are considered. As a consequence, it is possible to provide the corresponding notion of ``tensorial body in $\otimes_{i=1}^l\mathbb{C}^{d_i}$'' as well as the definition of the tensorial Banach-Mazur distance. Here, for the sake of transparency we will concentrate in the case of $0$-symmetric convex bodies in real spaces.

\subsection{Preliminaries}

Throughout this paper, $X$, $Y$ or $X_i$ will  denote Banach spaces.  The closed unit ball of $X$ will be  denoted by  $B_{X}$ and  its  dual space by $X^{*}.$ We write $\mathcal{L}\left(X,Y\right)$ to denote the space of  bounded linear operators from $X$ to $Y.$

Let $V_{i},$ $i=1,\ldots,l$ be vector spaces over the same field $\mathbb{R}$ or $\mathbb{C}$. By $\otimes_{i=1}^{l}V_{i}$
we denote its tensor product, and by $\otimes$ we denote the canonical
multilinear map:
\begin{alignat*}{1}
\otimes:V_{1}\times\cdots\times V_{l} & \longrightarrow\otimes_{i=1}^{l}V_{i}\\
\left(x^{1},\ldots,x^{l}\right) & \rightarrow x^{1}\otimes\cdots\otimes x^{l}.
\end{alignat*}

In the case of Banach spaces, a norm $\alpha\left(\cdot\right)$ on the tensor product $\otimes_{i=1}^{l}X_{i}$ is called a \textbf{reasonable crossnorm} if
\begin{enumerate}
\item $\alpha\left(x^{1}\otimes\cdots\otimes x^{l}\right)\leq\left\Vert x^{1}\right\Vert \cdots\left\Vert x^{l}\right\Vert $
for every $x^{i}\in X_{i}$ with $i=1,...,l.$

\item If $x_{i}^{*}\in X_{i}^{*}$ for $i=1,...,l$ then $x_{1}^{*}\otimes\cdots\otimes x_{l}^{*}\in\left(\otimes_{i=1}^{l}X_{i},\alpha\right)^{*}$
and $\left\Vert x_{1}^{*}\otimes\cdots\otimes x_{l}^{*}\right\Vert \leq\left\Vert x_{1}^{*}\right\Vert \cdots\left\Vert x_{l}^{*}\right\Vert .$
\end{enumerate}

If $\alpha\left(\cdot\right)$ is a reasonable crossnorm on $\otimes_{i=1}^{l}X_{i}$, $\otimes_{\alpha,i=1}^{l}X_{i}$ will denote the normed space $\left(\otimes_{i=1}^{l}X_{i},\alpha\right),$ and $X_1\hat{\otimes}_{\alpha}\cdots\hat{\otimes}_{\alpha}X_{l}$ its completion.

For each $u\in\otimes_{i=1}^{l}X_{i}$, the projective norm $\pi$ and the injective norm $\epsilon$ are defined by:
\[
\pi\left(u\right):=\inf\left\{ \stackrel[i=1]{n}{\sum}\left\Vert x_{i}^{1}\right\Vert \cdots\left\Vert x_{i}^{l}\right\Vert :u=\stackrel[i=1]{n}{\sum}x_{i}^{1}\otimes\cdots\otimes x_{i}^{l}\right\}
\]
and
\[
\epsilon\left(u\right):=\sup\left\{ \left|x_{1}^{*}\otimes\cdots\otimes x_{l}^{*}\left(u\right)\right|:x_{i}^{*}\in B_{X_{i}^{*}},\textrm{ for }i=1,\ldots,l\right\}.
\]
Both the projective and the injective norm are reasonable crossnorms on $\otimes_{i=1}^{l}X_{i}.$  Indeed, these norms provide the next fundamental characterization of reasonable crossnorms:

A norm $\alpha\left(\cdot\right)$ on $\otimes_{i=1}^{l}X_{i}$ is a reasonable crossnorm if and only if
\begin{equation}
\label{eq:Caracterizacion de normars razonables cruzadas}
\epsilon\left(u\right)\leq\alpha\left(u\right)\leq\pi\left(u\right)\text{ for every }u\in\otimes_{i=1}^{l}X_{i}.
\end{equation}
The proof of this equivalence in the case of two normed spaces
can be consulted in  \cite[Proposition 6.3]{Ryan2013}. For a deeper discussion about tensor norms we also refer to \cite{defantfloret}.

\subsubsection{Convex bodies in Euclidean spaces}

Let $\mathbb{E}$ be a real Euclidean space with scalar product $\left\langle\cdot,\cdot\right\rangle _{\mathbb{E}}$ and Euclidean ball $B_{\mathbb{E}}$. A subset $P\subset\mathbb{E}$ is called  a \textsl{convex body} if $P$ is a compact convex set with nonempty interior. Every convex body $P\subset\mathbb{E}$ for which $P=-P$ is called a $0$-symmetric (or centrally symmetric) convex body. The set of $0$-symmetric convex bodies in $\mathbb{E}$ is denoted by $\mathcal{B}\left(\mathbb{E}\right)$ (resp. $\mathcal{B}\left(d\right)$ if $\mathbb{E}=\mathbb{R}^{d}$).

If $C$ is a nonempty subset of $\mathbb{E},$ then its \textbf{polar set} is defined by
\[
C^{\circ}:=\left\{ y\in\mathbb{E}:{\sup}_{x\in C}\left|\left\langle x,y\right\rangle _{\mathbb{E}}\right|\leq1\right\}.
\]
The \textbf{Minkowski functional} (or gauge function) of $P\in\mathcal{B}\left(\mathbb{E}\right)$  is defined as
\[
g_{P}\left(x\right):=\inf\left\{ \lambda>0:\frac{1}{\lambda}x\in P\right\} \text{ for }x\in\mathbb{E}.
\]
A fundamental result concerning  $0$-symmetric convex bodies is the bijection between norms defined on $\mathbb{E}$ and $0$-symmetric convex bodies in $\mathbb{E}$. This result, originally  due to H. Minkowksi \cite{Minkowski1927},  will be used throughout the paper without making an explicit reference. We will use it in  the following form:
\begin{thm}
\label{thm:(H.-Minkowski)} Let $\mathbb{E}$ be a Euclidean
space. If $A\in\mathcal{B}\left(\mathbb{E}\right),$ then
\[
\left\Vert x\right\Vert _{A}:=g_{A}\left(x\right)\text{ for }x\in\mathbb{E}
\]
 defines a norm $\left\Vert \cdot\right\Vert _{A}$ on $\mathbb{E}$
for which $A$ is the closed unit ball. Furthermore, for every $x\in\mathbb{E}$
we have
\[
\left\Vert x\right\Vert _{A^{\circ}}=\left\Vert \left\langle \cdot,x\right\rangle :\left(\mathbb{E},\left\Vert \cdot\right\Vert _{A}\right)\rightarrow\mathbb{R}\right\Vert .
\]
\end{thm}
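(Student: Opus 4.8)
The plan is to check directly from the definition that $g_{A}$ satisfies the norm axioms, using only that $A$ is compact, convex, has nonempty interior and $A=-A$; then to identify $A$ with the sublevel set $\{x:g_{A}(x)\le 1\}$; and finally to obtain the polar identity by unwinding the definition of $A^{\circ}$.

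First I would record the two geometric facts behind everything. Since $A$ has nonempty interior and $A=-A$, taking $x\in\mathrm{int}(A)$ we get $-x\in\mathrm{int}(A)$, and the midpoint $0=\frac{1}{2}x+\frac{1}{2}(-x)$ lies in $\mathrm{int}(A)$ by convexity; hence $\rho B_{\mathbb{E}}\subseteq A$ for some $\rho>0$. Since $A$ is compact, $A\subseteq R B_{\mathbb{E}}$ for some $R>0$. From $\rho B_{\mathbb{E}}\subseteq A$ it follows that $g_{A}(x)\le\rho^{-1}\left\Vert x\right\Vert _{\mathbb{E}}<\infty$, so $g_{A}$ is finite-valued; from $A\subseteq R B_{\mathbb{E}}$, any $\lambda$ with $x/\lambda\in A$ satisfies $\left\Vert x\right\Vert _{\mathbb{E}}\le R\lambda$, so $g_{A}(x)\ge R^{-1}\left\Vert x\right\Vert _{\mathbb{E}}$, which gives positive definiteness. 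Positive homogeneity $g_{A}(tx)=t\,g_{A}(x)$ for $t>0$ is immediate from the definition, and $g_{A}(-x)=g_{A}(x)$ follows from $A=-A$, so $g_{A}(\lambda x)=\left|\lambda\right|g_{A}(x)$. For subadditivity, if $x/\lambda\in A$ and $y/\mu\in A$ then convexity gives $\frac{x+y}{\lambda+\mu}=\frac{\lambda}{\lambda+\mu}\cdot\frac{x}{\lambda}+\frac{\mu}{\lambda+\mu}\cdot\frac{y}{\mu}\in A$, so $g_{A}(x+y)\le\lambda+\mu$; taking the infimum over admissible $\lambda,\mu$ yields $g_{A}(x+y)\le g_{A}(x)+g_{A}(y)$. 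Thus $\left\Vert \cdot\right\Vert _{A}:=g_{A}$ is a norm.

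Next I would show $A=\{x:\left\Vert x\right\Vert _{A}\le 1\}$. The inclusion ``$\subseteq$'' is trivial from the definition of $g_{A}$. Conversely, if $g_{A}(x)\le 1$ and $x\ne 0$, choose $\lambda_{n}\downarrow g_{A}(x)$ with $x/\lambda_{n}\in A$; since $A$ is closed, $x/g_{A}(x)=\lim_{n}x/\lambda_{n}\in A$, and then $x=g_{A}(x)\cdot\frac{x}{g_{A}(x)}+\bigl(1-g_{A}(x)\bigr)\cdot 0\in A$ by convexity and $0\in A$ (the case $x=0$ being clear). Then for the polar statement I would first note $A^{\circ}\in\mathcal{B}(\mathbb{E})$: it is $0$-symmetric and convex by its very definition, closed, bounded because $\rho B_{\mathbb{E}}\subseteq A$ forces $A^{\circ}\subseteq\rho^{-1}B_{\mathbb{E}}$, and with nonempty interior because $A\subseteq R B_{\mathbb{E}}$ forces $R^{-1}B_{\mathbb{E}}\subseteq A^{\circ}$; hence the first part applies to $A^{\circ}$ and $\left\Vert \cdot\right\Vert _{A^{\circ}}=g_{A^{\circ}}$ is a norm with unit ball $A^{\circ}$. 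Now fix $x\in\mathbb{E}$: by the definition of $A^{\circ}$, the condition $x/\lambda\in A^{\circ}$ reads $\sup_{z\in A}\left|\left\langle z,x\right\rangle _{\mathbb{E}}\right|\le\lambda$, so $g_{A^{\circ}}(x)=\inf\bigl\{\lambda>0:\sup_{z\in A}\left|\left\langle z,x\right\rangle _{\mathbb{E}}\right|\le\lambda\bigr\}=\sup_{z\in A}\left|\left\langle z,x\right\rangle _{\mathbb{E}}\right|$. Since $A=\{z:\left\Vert z\right\Vert _{A}\le 1\}$ by the previous step, the right-hand side is precisely the operator norm of the linear functional $z\mapsto\left\langle z,x\right\rangle _{\mathbb{E}}$ on $\left(\mathbb{E},\left\Vert \cdot\right\Vert _{A}\right)$, which is the asserted identity.

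This is the classical Minkowski correspondence, so I do not expect a genuine obstacle; the only steps that need a little care are verifying $0\in\mathrm{int}(A)$ (which is what makes $g_{A}$ finite and, dually, $A^{\circ}$ bounded) and using the closedness of $A$ to pass from $g_{A}(x)\le 1$ back to $x\in A$. Everything else is a direct unwinding of the definitions of $g_{A}$ and of $C^{\circ}$.
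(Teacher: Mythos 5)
Your proof is correct and complete; the paper itself gives no proof of this statement, instead citing Schneider's monograph (\cite{Schneider1993}), and the argument you give is exactly the standard Minkowski correspondence that one finds there. The only points that genuinely need care — establishing $0\in\mathrm{int}(A)$ from $0$-symmetry and nonempty interior, using closedness of $A$ to recover $A=\{x:g_A(x)\le 1\}$, and verifying that $A^{\circ}\in\mathcal{B}(\mathbb{E})$ before applying the first part to it — are all handled correctly.
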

This statement as well as  the  theory of  convex bodies and convex geometry that will be used in this paper, can be found  in  \cite{Schneider1993}.

\section{The projective and injective tensor products of $0$-symmetric
convex bodies}

To  introduce  the projective and the injective tensor products of $0$-symmetric convex bodies, it is convenient to first recall  two well known facts about tensor products of Banach spaces. The first one is that
\begin{equation}
\label{eq:eq projective tensor norm ball}
B_{X_1\hat{\otimes}_{\pi}\cdots\hat{\otimes}_{\pi}X_l}=\overline{\text{conv}\left\{ x^1\otimes\cdots\otimes x^l:x^1\in B_{X_1},\ldots,x^l\in B_{X_l}\right\} }.
\end{equation}
The second one is the duality between the injective and projective tensor product of Banach spaces given by the canonical isometry:
\begin{equation}
\label{eq:duality projective injective norm}
X_1\otimes_{\epsilon}\cdots\otimes_{\epsilon}X_l\hookrightarrow\left(X_{1}^{*}\otimes_{\pi}\cdots\otimes_{\pi}X_{l}^{*}\right)^{*},
\end{equation}
which on finite dimensions is an isometric isomorphism (see \cite[ pp. 27, 46]{defantfloret}, respectively).

Let  $Q_i\subset\mathbb{R}^{d_i}$, $i=1,\ldots,l$ be  $0$-symmetric convex bodies with associated Minkowski functionals $g_{Q_{i}},$ $i=1,\ldots,l.$  By (\ref{eq:eq projective tensor norm ball}), $\overline{\text{conv}\left\{ x^1\otimes\cdots\otimes x^l:x^{i}\in Q_{i}\right\}}$ is the closed unit ball of the projective norm on $\otimes_{i=1}^{l}\left(\mathbb{R}^{d_{i}},g_{Q_{i}}\right).$ This fact provides a natural way to define the projective tensor product of $0$-symmetric convex bodies: the \textbf{projective tensor product of $Q_1,\ldots,Q_l$} is the $0$-symmetric convex body in $\otimes_{i=1}^{l}\mathbb{R}^{d_{i}}$ defined by:
\begin{equation*}
Q_{1}\otimes_{\pi}\cdots\otimes_{\pi}Q_{l}:=\text{conv}\left\{ x^{1}\otimes\cdots\otimes x^{l}\in\otimes_{i=1}^{l}\mathbb{R}^{d_{i}}:x^{i}\in Q_{i}, i=1,\ldots,l\right\}.
\end{equation*}
This definition was introduced by G. Aubrun and S. Szarek in \cite{Aubrun2006}. There, the   projective tensor product of more general classes of convex sets is considered.

Since $\text{conv}\left\{ x^{1}\otimes\cdots\otimes x^{l}\in\otimes_{i=1}^{l}\mathbb{R}^{d_{i}}:x^{i}\in Q_{i}\right\}$ is compact (see  Proposition \ref{prop:imagen de compactos es compacto}), it coincides with its closure. Then,
\begin{equation}
\label{eq: projec convex bodies is unit ball proj norm}
Q_{1}\otimes_{\pi}\cdots\otimes_{\pi}Q_{l}=B_{\otimes_{\pi,i=1}^{l}\left(\mathbb{R}^{d_{i}},g_{Q_{i}}\left(\cdot\right)\right)}.
\end{equation}

The duality between the injective and the projective tensor norms given in (\ref{eq:duality projective injective norm}) gives rise to a notion of  injective tensor product of $0$-symmetric convex bodies.
To be precise, we first fix  the scalar products that will  be used through the paper.

Given $d\in\mathbb{N}$, we  will denote by $\left\langle\cdot,\cdot\right\rangle$ the standard scalar product on $\mathbb{R}^{d},$ and by $\left\Vert\cdot\right\Vert_{2},$ $B_{2}^{d}$ its associated norm and Euclidean ball respectively.

 The scalar product on $\otimes_{i=1}^{l}\mathbb{R}^{d_{i}}$   will be the one associated to the Hilbert tensor product $\otimes_{H,i=1}^{l}\mathbb{R}^{d_{i}}, $ that is, $\left\langle \cdot,\cdot \right\rangle _{H}$ will be the bilinear form determined by the relation
 \[
 \left\langle x^{1}\otimes\cdots\otimes x^{l},y^{1}\otimes\cdots\otimes y^{l}\right\rangle _{H}:={\Pi}_{i=1}^l\left\langle x^i,y^i\right\rangle
 \]
 (see \cite[Section 2.5 ]{kadisonkingrose}).
 The closed unit ball of $\otimes_{H,i=1}^{l}\mathbb{R}^{d_{i}}$ will be denoted by $B_{2}^{d_{1},\ldots,d_{l}}$, and its norm by $\Vert\cdot\Vert_{H}$. In this way, given a  $0$-symmetric convex body $Q\subset\otimes_{i=1}^{l}\mathbb{R}^{d_{i}}$, its  polar set  acquires the form $Q^{\circ}=\left\{ z\in\otimes_{i=1}^{l}\mathbb{R}^{d_{i}}:{\sup}_{u\in Q}\left|\left\langle u,z\right\rangle _{H}\right|\leq1\right\}.$

Now, if  $Q_{i}\subset\mathbb{R}^{d_{i}}$, $i=1,\ldots,l$  are   $0$-symmetric convex bodies,  the \textbf{ injective tensor product of $Q_1,\ldots,Q_l$} is the $0$-symmetric conex body  in $\otimes_{i=1}^{l}\mathbb{R}^{d_{i}}$ defined as   follows:
\begin{equation*}
Q_{1}\otimes_{\epsilon}\cdots\otimes_{\epsilon}Q_{l}:=(Q_{1}^{\circ}\otimes_{\pi}\cdots\otimes_{\pi}Q_{l}^{\circ})^{\circ}.
\end{equation*}

This definition appeared  for the first time  in the remarkable monograph  \cite[Subsection 4.1.4]{Aubrun2017} published in 2017. Later we will use this identity  written in the following equivalent ways:

\begin{prop}\label{prop: duality} Let $Q_{i}\subset\mathbb{R}^{d_{i}},$ $i=1,\ldots,l$ be  $0$-symmetric convex bodies. Then,

\begin{enumerate}
  \item $
(Q_{1}\otimes_{\epsilon}\cdots\otimes_{\epsilon}Q_{l})^{\circ}=
Q_{1}^{\circ}\otimes_{\pi}\cdots\otimes_{\pi}Q_{l}^{\circ}.
$

\item $ (Q_{1}\otimes_{\pi}\cdots\otimes_{\pi}Q_{l})^{\circ}=
Q_{1}^{\circ}\otimes_{\epsilon}\cdots\otimes_{\epsilon}Q_{l}^{\circ}.
$
\end{enumerate}
\end{prop}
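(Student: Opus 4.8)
The plan is to obtain both identities as purely formal consequences of the definition of the injective tensor product of convex bodies together with the bipolar theorem. Throughout, polars in $\otimes_{i=1}^{l}\mathbb{R}^{d_{i}}$ are taken with respect to the fixed scalar product $\left\langle\cdot,\cdot\right\rangle_{H}$, and on each factor with respect to the standard scalar product, so the double polars that occur in the two parts are always taken with respect to the same pairing; I will use the standard fact that a $0$-symmetric convex body $C$ in a Euclidean space satisfies $C^{\circ\circ}=C$ (indeed $C^{\circ}$ is again a $0$-symmetric convex body, and $C$ is closed, convex and has the origin in its interior). The only preliminary point to record is that $Q_{1}\otimes_{\pi}\cdots\otimes_{\pi}Q_{l}$ is a $0$-symmetric convex body in $\otimes_{i=1}^{l}\mathbb{R}^{d_{i}}$ — which is already established in Section 2: it is the convex hull of the image of the compact set $Q_{1}\times\cdots\times Q_{l}$ under the continuous canonical multilinear map, hence compact by Proposition \ref{prop:imagen de compactos es compacto}, and it has nonempty interior because it contains a projective tensor product of small Euclidean balls. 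Consequently $Q_{i}^{\circ}$ and $Q_{1}^{\circ}\otimes_{\pi}\cdots\otimes_{\pi}Q_{l}^{\circ}$ are $0$-symmetric convex bodies as well, so every double polar appearing below is legitimate.

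For item (1), I would take the polar of both sides of the defining equality $Q_{1}\otimes_{\epsilon}\cdots\otimes_{\epsilon}Q_{l}=\left(Q_{1}^{\circ}\otimes_{\pi}\cdots\otimes_{\pi}Q_{l}^{\circ}\right)^{\circ}$, obtaining $\left(Q_{1}\otimes_{\epsilon}\cdots\otimes_{\epsilon}Q_{l}\right)^{\circ}=\left(Q_{1}^{\circ}\otimes_{\pi}\cdots\otimes_{\pi}Q_{l}^{\circ}\right)^{\circ\circ}=Q_{1}^{\circ}\otimes_{\pi}\cdots\otimes_{\pi}Q_{l}^{\circ}$, where the last step is the bipolar theorem applied to the $0$-symmetric convex body $Q_{1}^{\circ}\otimes_{\pi}\cdots\otimes_{\pi}Q_{l}^{\circ}$. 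For item (2), I would instead apply the definition of the injective tensor product with each $Q_{i}$ replaced by $Q_{i}^{\circ}$, which gives $Q_{1}^{\circ}\otimes_{\epsilon}\cdots\otimes_{\epsilon}Q_{l}^{\circ}=\left(\left(Q_{1}^{\circ}\right)^{\circ}\otimes_{\pi}\cdots\otimes_{\pi}\left(Q_{l}^{\circ}\right)^{\circ}\right)^{\circ}$; since $\left(Q_{i}^{\circ}\right)^{\circ}=Q_{i}$ for each $i$ by the bipolar theorem, the right-hand side equals $\left(Q_{1}\otimes_{\pi}\cdots\otimes_{\pi}Q_{l}\right)^{\circ}$, which is exactly the claim.

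I do not anticipate any genuine obstacle: this proposition is a bookkeeping exercise in the definition of $\otimes_{\epsilon}$ and the bipolar theorem. The only spot deserving a line of care is verifying that the bipolar theorem is applicable at each step, i.e. that every set whose double polar is formed is a $0$-symmetric convex body; this in turn reduces to the compactness of convex hulls of images of compacta under the canonical multilinear map (Proposition \ref{prop:imagen de compactos es compacto}) and to the elementary observation that the projective tensor product of convex bodies has nonempty interior.
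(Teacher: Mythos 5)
Your proof is correct; the paper states this proposition without an explicit proof, treating it as an immediate consequence of the definition of $\otimes_{\epsilon}$ together with the bipolar theorem, and your argument carries out exactly that route (item (1) by taking polars of the defining identity $Q_{1}\otimes_{\epsilon}\cdots\otimes_{\epsilon}Q_{l}=\left(Q_{1}^{\circ}\otimes_{\pi}\cdots\otimes_{\pi}Q_{l}^{\circ}\right)^{\circ}$, item (2) by applying that definition to the $Q_{i}^{\circ}$ and using $\left(Q_{i}^{\circ}\right)^{\circ}=Q_{i}$). Your verification that the bipolar theorem applies — compactness of the projective tensor product via Proposition \ref{prop:imagen de compactos es compacto}, nonempty interior, and $0$-symmetry — is sound.
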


Due to the duality between the projective and the injective tensor norms (\ref{eq:duality projective injective norm}), along with (\ref{eq: projec convex bodies is unit ball proj norm}),
we have that
\begin{equation}
\label{eq: pinjec convex bodies is unit ball inj norm}
Q_{1}\otimes_{\epsilon}\cdots\otimes_{\epsilon}Q_{l}=B_{\otimes_{\epsilon,i=1}^{l}\left(\mathbb{R}^{d_{i}},g_{Q_{i}}\left(\cdot\right)\right)}.
\end{equation}

\subsection{The  unit balls of $\ell_1^d$ and  $\ell_{\infty}^d$.}\label{sec: proj-inj}

Proposition \ref{prop: l1 linf balls} below, together with  (\ref{eq: projec convex bodies is unit ball proj norm}) and (\ref{eq: pinjec convex bodies is unit ball inj norm}) show that the convex bodies $B_{1}^{d},$ $B_{\infty}^{d},$ $d=d_1\cdots d_l,$ are the closed unit balls of $\otimes_{\pi,i=1}^l\ell_{1}^{d_i}$ and  $\otimes_{\epsilon,i=1}^l\ell_{\infty}^{d_i},$ respectively.

In effect, let  $B_p^d$ be  the closed unit ball of $\ell_p^d$,   $d\in \mathbb{N}$ and $1\leq p\leq \infty$.
For each   $i=1,...,l$,  let  $\left\{e_{j_{i}}^{d_{i}}\right\}_{j_{i}=1,\ldots,d_{i}}$ be  the standard basis of
$\mathbb{R}^{d_{i}}$.
Then, the set of vectors  $\left\{ e_{j_{1}}^{d_{1}}\otimes\cdots\otimes e_{j_{l}}^{d_{l}}\right\}$ is an orthonormal basis in $\otimes_{H,i=1}^{l}\mathbb{R}^{d_{i}},$ and it can be identified with the standard basis of $\mathbb{R}^{d}$, $d=d_1\cdots d_l$. Consequently, for each $1\leq p\leq\infty$, the sets
\[
B_{p}^{d_{1},\ldots,d_{l}}:=\left\{ z\in\otimes_{i=1}^{l}\mathbb{R}^{d_{i}}:\underset{j_{1},\ldots,j_{l}}{\sum}\left|\left\langle z,e_{j_{1}}^{d_{1}}\otimes\cdots\otimes e_{j_{l}}^{d_{l}}\right\rangle _{H}\right|^{p}\leq1\right\} \text{ for } \,p\neq\infty
\]
 and
\[
B_{\infty}^{d_{1},\ldots,d_{l}}:=\left\{ z\in\otimes_{i=1}^{l}\mathbb{R}^{d_{i}}:\underset{j_{1},\ldots,j_{l}}{\max}\left|\left\langle z,e_{j_{1}}^{d_{1}}\otimes\cdots\otimes e_{j_{l}}^{d_{l}}\right\rangle _{H}\right|\leq1\right\}
\]
are naturally identified with the closed unit balls of $\ell_p^d.$ Thus, $B_{p}^{d}=B_{p}^{d_{1},...,d_{l}}$ for $1\leq p\leq\infty.$

\begin{prop}
\label{prop: l1 linf balls}
Let $d \in\mathbb{N}$. For every factorization of $d$  in natural numbers  $d=d_1\cdots  d_l, $
  $$B_{1}^{d}=B_{1}^{d_{1}}\otimes_{\pi}\cdots\otimes_{\pi}B_{1}^{d_{l}} \hspace{1cm}\mbox{and}\hspace{1cm}
 B_{\infty}^{d}=B_{\infty}^{d_{1}}\otimes_{\epsilon}\cdots\otimes_{\epsilon}B_{\infty}^{d_{l}}.$$
\end{prop}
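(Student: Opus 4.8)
The plan is to prove the two identities separately, using the already-established fact (equation~(\ref{eq: projec convex bodies is unit ball proj norm})) that $B_1^{d_1}\otimes_{\pi}\cdots\otimes_{\pi}B_1^{d_l}$ is precisely the closed unit ball of the projective tensor norm on $\otimes_{i=1}^l\left(\mathbb{R}^{d_i},g_{B_1^{d_i}}\right)=\otimes_{\pi,i=1}^l\ell_1^{d_i}$, and dually (equation~(\ref{eq: pinjec convex bodies is unit ball inj norm})) that $B_\infty^{d_1}\otimes_{\epsilon}\cdots\otimes_{\epsilon}B_\infty^{d_l}=B_{\otimes_{\epsilon,i=1}^l\ell_\infty^{d_i}}$. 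So the whole content reduces to the classical isometric identifications $\otimes_{\pi,i=1}^l\ell_1^{d_i}\cong\ell_1^d$ and $\otimes_{\epsilon,i=1}^l\ell_\infty^{d_i}\cong\ell_\infty^d$ under the basis identification $e_{j_1}^{d_1}\otimes\cdots\otimes e_{j_l}^{d_l}\leftrightarrow e_j^d$ fixed just before the statement, so that in the end it suffices to show $B_1^{d_1}\otimes_\pi\cdots\otimes_\pi B_1^{d_l}=B_1^{d_1,\ldots,d_l}$ and the analogous statement for $\ell_\infty$.

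For the projective identity I would argue directly from the definition of $\otimes_\pi$ as a convex hull. A generic element of $B_1^{d_i}$ is a convex combination of signed basis vectors $\pm e_{j_i}^{d_i}$, so $B_1^{d_i}=\mathrm{conv}\{\pm e_{j_i}^{d_i}\}$; since the map $(x^1,\ldots,x^l)\mapsto x^1\otimes\cdots\otimes x^l$ sends products of extreme points to the vectors $\pm e_{j_1}^{d_1}\otimes\cdots\otimes e_{j_l}^{d_l}$, and since the convex hull of the image of a product of convex sets under a multilinear map equals the convex hull of the image of the product of their extreme points, we get $B_1^{d_1}\otimes_\pi\cdots\otimes_\pi B_1^{d_l}=\mathrm{conv}\{\pm e_{j_1}^{d_1}\otimes\cdots\otimes e_{j_l}^{d_l}\}$. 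Under the basis identification this is exactly $\mathrm{conv}\{\pm e_j^d\}=B_1^d$, which also matches the description of $B_1^{d_1,\ldots,d_l}$ given above. (One could alternatively invoke the standard fact that $\ell_1$ is the ``smallest'' reasonable crossnorm agreeing with the norms on the factors; but the convex-hull computation is more self-contained given what the paper has set up.)

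The $\ell_\infty$ identity then follows by duality with essentially no extra work: the standard scalar product on $\mathbb{R}^d$ makes $B_1^d$ and $B_\infty^d$ mutually polar, i.e. $(B_1^d)^\circ=B_\infty^d$ and $(B_1^{d_i})^\circ=B_\infty^{d_i}$, and the Hilbert tensor scalar product $\langle\cdot,\cdot\rangle_H$ was chosen precisely so that the orthonormal basis $\{e_{j_1}^{d_1}\otimes\cdots\otimes e_{j_l}^{d_l}\}$ is identified with the standard orthonormal basis of $\mathbb{R}^d$; hence polarity computed with $\langle\cdot,\cdot\rangle_H$ agrees with polarity computed with $\langle\cdot,\cdot\rangle$ on $\mathbb{R}^d$ under the identification. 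Applying Proposition~\ref{prop: duality}(2) together with the projective identity just proved gives
\begin{align*}
B_\infty^{d_1}\otimes_\epsilon\cdots\otimes_\epsilon B_\infty^{d_l}
&=(B_1^{d_1})^\circ\otimes_\epsilon\cdots\otimes_\epsilon(B_1^{d_l})^\circ
=\bigl(B_1^{d_1}\otimes_\pi\cdots\otimes_\pi B_1^{d_l}\bigr)^\circ
=(B_1^d)^\circ=B_\infty^d.
\end{align*}

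The only genuinely delicate point is the interchange of convex hull and multilinear image used in the projective step — that $\mathrm{conv}(f(C_1\times\cdots\times C_l))=\mathrm{conv}(f(\mathrm{ext}\,C_1\times\cdots\times\mathrm{ext}\,C_l))$ for a multilinear $f$ and compact convex $C_i$ — but this is immediate from multilinearity: fixing all but one argument makes $f$ affine, so extreme points of each $C_i$ suffice coordinate by coordinate, and compactness of the hull (Proposition~\ref{prop:imagen de compactos es compacto}) ensures no closure issue. Everything else is bookkeeping with the two basis identifications, so I expect the write-up to be short.
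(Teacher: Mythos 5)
The paper does not actually prove this proposition; it simply declares it well known and cites Ryan's book (Exercise~2.6) and Aubrun--Szarek (p.~83). Your proposal therefore does more than the paper itself: it supplies a short, self-contained argument, and that argument is correct. The convex-hull computation for the $\ell_1$ half is sound: since each $B_1^{d_i}=\mathrm{conv}\{\pm e_{j_i}^{d_i}\}$ and the canonical map $\otimes$ is multilinear, applying Krein--Milman one slot at a time (fixing the other arguments makes the map linear, hence affine, in the remaining one) gives $\mathrm{conv}\,\otimes\bigl(B_1^{d_1},\ldots,B_1^{d_l}\bigr)=\mathrm{conv}\{\pm e_{j_1}^{d_1}\otimes\cdots\otimes e_{j_l}^{d_l}\}$, which under the basis identification fixed just before the statement is exactly $B_1^{d_1,\ldots,d_l}=B_1^d$. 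The $\ell_\infty$ half then does follow formally from Proposition~\ref{prop: duality}(2) and the fact that $\langle\cdot,\cdot\rangle_H$-polarity corresponds to standard polarity on $\mathbb{R}^d$ under that same orthonormal identification, exactly as you say. One cosmetic point: Proposition~\ref{prop:imagen de compactos es compacto} gives compactness of the set of decomposable vectors $\otimes(A_1,\ldots,A_l)$, not of its convex hull; the step from there to compactness (hence closedness) of the hull is Carath\'eodory's theorem in finite dimensions. The paper makes the same elision when introducing $\otimes_\pi$, so this is not a gap, merely something to state more precisely in a final write-up.
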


The previous proposition is a well known result, see for instance \cite[Excercise 2.6]{Ryan2013} or \cite[pp. 83]{Aubrun2017}.

In Subection 3.1, we will treat the case $1<p<\infty$. We will see that $B_{p}^d$ is  the closed unit ball associated to a reasonable crossnorm on $\otimes_{i=1}^l\ell_{p}^{d_i}.$ In this case it is not the projective nor the injective tensor norm on $\otimes_{i=1}^l\ell_{p}^{d_i}.$

We finish this section stating without proof  two  results that    will be used  throughout the paper. Proposition \ref{prop:DECOMPOSABLE TENSORS ARE CLOSED} is a well known result (for a proof see \cite[Proposition 4.2]{DeSilva2008}). Proposition \ref{prop:imagen de compactos es compacto} is a direct consequence of the continuity of the canonical multilinear map $\otimes:\mathbb{R}^{d_1}\times\cdots\times\mathbb{R}^{d_l}\rightarrow\otimes_{H,i=1}^{l}\mathbb{R}^{d_{i}}.$

\begin{prop}
\label{prop:DECOMPOSABLE TENSORS ARE CLOSED}The set of decomposable vectors
$\left\{ x^{1}\otimes\cdots\otimes x^{l}\in\otimes_{i=1}^{l}\mathbb{R}^{d_{i}}:x^{i}\in \mathbb{R}^{d_{i}}\right\}$
 is a closed subset of $\otimes_{H,i=1}^{l}\mathbb{R}^{d_{i}}.$
\end{prop}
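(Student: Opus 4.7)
The plan is to prove sequential closedness. Suppose $u_n = x_n^1 \otimes \cdots \otimes x_n^l$ converges in $\otimes_{H,i=1}^l \mathbb{R}^{d_i}$ to some $u$; I aim to exhibit vectors $x^i \in \mathbb{R}^{d_i}$ with $u = x^1 \otimes \cdots \otimes x^l$.

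First I would dispose of the trivial case $u=0$, where the zero tensor is decomposable as $0 \otimes \cdots \otimes 0$. Assume then $u \neq 0$. Exploiting the scalar freedom in decomposable representations (i.e.\ $\lambda x^1 \otimes x^2 \otimes \cdots \otimes x^l = x^1 \otimes (\lambda x^2) \otimes \cdots \otimes x^l$), I normalize: for every sufficiently large $n$ I replace the representation by one in which $\|x_n^i\|_2 = 1$ for $i = 2, \ldots, l$, absorbing the scalars into $x_n^1$. Since the Hilbert norm satisfies $\|x_n^1 \otimes \cdots \otimes x_n^l\|_H = \prod_i \|x_n^i\|_2$, this rescaling gives $\|x_n^1\|_2 = \|u_n\|_H \to \|u\|_H$, so the sequence $(x_n^1)$ is bounded.

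Next, by compactness of the unit sphere in each $\mathbb{R}^{d_i}$ ($i \geq 2$) and boundedness of $(x_n^1)$, I pass to a subsequence along which $x_n^i \to x^i$ for every $i = 1, \ldots, l$, with $\|x^i\|_2 = 1$ for $i \geq 2$. The multilinearity and continuity of the canonical map $\otimes$ on finite-dimensional spaces then yield $u_n = x_n^1 \otimes \cdots \otimes x_n^l \to x^1 \otimes \cdots \otimes x^l$, and uniqueness of limits gives $u = x^1 \otimes \cdots \otimes x^l$, which is decomposable.

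The only delicate step is guaranteeing that, after the rescaling, the sequence $(x_n^1)$ is bounded and hence has a convergent subsequence; this is precisely where I use the identity $\|u_n\|_H = \prod_i \|x_n^i\|_2$ coming from the Hilbert tensor inner product. Should an alternative argument be preferred, the same conclusion can be reached by contracting $u_n$ against $x^2 \otimes \cdots \otimes x^l$ (where $x^i$ are the limits obtained for $i \geq 2$): since the linear functional $\langle \,\cdot\,, \,\cdot\, \otimes x^2 \otimes \cdots \otimes x^l \rangle_H$ gives a continuous partial evaluation $\otimes_{i=1}^l \mathbb{R}^{d_i} \to \mathbb{R}^{d_1}$ producing $\prod_{i \geq 2} \langle x_n^i, x^i\rangle \cdot x_n^1 \to (\text{partial contraction of } u)$, one recovers the limit $x^1$ without needing compactness on the first factor. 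Either route completes the argument.
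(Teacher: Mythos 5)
Your proof is correct. A small caveat for context: the paper does not supply its own argument for this proposition; it simply cites \cite[Proposition 4.2]{DeSilva2008} and calls the result well known. So there is no ``paper's proof'' to compare against, but your normalization-plus-compactness argument is the standard analytic proof and is essentially the one given in the cited reference. The key steps are all in order: passing to a tail where $u_n\neq 0$ so every factor $x_n^i$ is nonzero; rescaling so $\|x_n^i\|_2=1$ for $i\geq 2$ and absorbing the scalars into the first factor; using the multiplicativity $\|x^1\otimes\cdots\otimes x^l\|_H=\prod_i\|x^i\|_2$ of the Hilbert tensor norm (which holds because $\langle\cdot,\cdot\rangle_H$ is defined multiplicatively on decomposable vectors) to bound $(x_n^1)$; extracting a convergent subsequence in each factor; and invoking continuity of $\otimes$, which the paper records separately in Proposition \ref{prop:imagen de compactos es compacto}. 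Your alternative route via partial contraction against $x^2\otimes\cdots\otimes x^l$ is also valid once the limits $x^i$, $i\geq 2$, have been produced, since the contraction functional is linear (hence continuous) in finite dimensions and the scalar $\prod_{i\geq 2}\langle x_n^i,x^i\rangle$ tends to $1$, so is eventually nonzero. For completeness, an entirely different proof exists: the decomposable tensors form the affine cone over the Segre variety, which is cut out by the vanishing of all $2\times 2$ minors of the flattenings of the tensor, hence is Zariski-closed and a fortiori Euclidean-closed; your analytic argument is more elementary and self-contained, which is what one wants here.
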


\begin{prop}
\label{prop:imagen de compactos es compacto}
If $A_{i}\subseteq\mathbb{R}^{d_{i}},$ $i=1,...,l$ are compact sets then
$
\otimes\left(A_1,\ldots,A_l\right):=
\left\{ x^{1}\otimes\cdots\otimes x^{l}\in\otimes_{i=1}^{l}\mathbb{R}^{d_{i}}:x^{i}\in A_{i}\right\}
$
is a compact subset of $\otimes_{H,i=1}^{l}\mathbb{R}^{d_{i}}$.
\end{prop}

\section{tensorial bodies}

In this section we characterize the convex bodies in $\otimes_{i=1}^{l}\mathbb{R}^{d_{i}}$ that are the closed unit balls of reasonable crossnorms. They will be called  tensorial bodies (Definition \ref{def:tensorial convex bodies}).  A main tool to study them is  the   group of linear isomorphisms that preserve decomposable vectors. With it, we will  introduce a Banach-Mazur type distance between tensorial bodies, and prove  that there is a Banach-Mazur type compactum associated to them (see Subsection 3.2).

\

Recall that we have already  fixed the scalar product $\left\langle \cdot,\cdot \right\rangle _{H}$ on $\otimes_{i=1}^{l}\mathbb{R}^{d_{i}}$ and that $g_{Q}$ denotes the Minkowski functional of a $0$-symmetric convex body $Q$. Whit them, we have:
\begin{prop}
\label{prop:razonable cruzada version convexa}
Let $Q\subset\otimes_{i=1}^{l}\mathbb{R}^{d_{i}}$ and let $Q_{i}\subset\mathbb{R}^{d_i}$, $i=1,\ldots,l$ be $0$-symmetric convex bodies. Then,  $g_Q\left(\cdot\right)$ is a reasonable crossnorm on $\otimes_{i=1}^{l}\left(\mathbb{R}^{d_{i}},g_{Q_i}\left(\cdot\right)\right)$ if and only if
\begin{equation}
\label{eq: inclusion convex bodies razonables}
Q_{1}\otimes_{\pi}\cdots\otimes_{\pi}Q_{l}\subseteq Q\subseteq Q_{1}\otimes_{\epsilon}\cdots\otimes_{\epsilon}Q_{l}.
\end{equation}
In this case,  for every decomposable vector $x^{1}\otimes\cdots\otimes x^{l}\in\otimes_{i=1}^{l}\mathbb{R}^{d_{i}}$ we have:
\begin{align*}
g_{Q}\left(x^{1}\otimes\cdots\otimes x^{l}\right) & = g_{Q_{1}}\left(x^{1}\right)\cdots g_{Q_{l}}\left(x^{l}\right),\\
g_{Q^{\circ}}\left(x^{1}\otimes\cdots\otimes x^{l}\right) & = g_{Q_{1}^{\circ}}\left(x^{1}\right)\cdots g_{Q_{l}^{\circ}}\left(x^{l}\right).
\end{align*}
\end{prop}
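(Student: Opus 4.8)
The plan is to translate the analytic statement ``$g_Q$ is a reasonable crossnorm'' directly into the convex-geometric inclusions by means of the characterization \eqref{eq:Caracterizacion de normars razonables cruzadas}, and then to read the resulting inequalities back through the Minkowski-functional dictionary. Set $\|\cdot\|_i := g_{Q_i}$ on $\mathbb{R}^{d_i}$, so that $Q_i$ is the closed unit ball of $(\mathbb{R}^{d_i},\|\cdot\|_i)$, and recall that by \eqref{eq: projec convex bodies is unit ball proj norm} and \eqref{eq: pinjec convex bodies is unit ball inj norm} the bodies $Q_1\otimes_\pi\cdots\otimes_\pi Q_l$ and $Q_1\otimes_\epsilon\cdots\otimes_\epsilon Q_l$ are exactly the closed unit balls of the projective norm $\pi$ and the injective norm $\epsilon$ on $\otimes_{i=1}^l(\mathbb{R}^{d_i},\|\cdot\|_i)$.

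\textbf{Equivalence of the two statements.} First I would observe the elementary fact that for two $0$-symmetric convex bodies $A,B$ one has $A\subseteq B$ if and only if $g_B(u)\le g_A(u)$ for all $u$. Hence \eqref{eq: inclusion convex bodies razonables} is equivalent to the pointwise inequalities $\epsilon(u)\le g_Q(u)\le\pi(u)$ for every $u\in\otimes_{i=1}^l\mathbb{R}^{d_i}$, where $\epsilon,\pi$ are computed with respect to the norms $\|\cdot\|_i=g_{Q_i}$. But by \eqref{eq:Caracterizacion de normars razonables cruzadas} this chain of inequalities is precisely the statement that the norm $g_Q$ is a reasonable crossnorm on $\otimes_{i=1}^l(\mathbb{R}^{d_i},g_{Q_i})$. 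One must be slightly careful at the outset: for ``$g_Q$ is a reasonable crossnorm'' to even make sense, $g_Q$ must be a norm, which it is by Theorem \ref{thm:(H.-Minkowski)} because $Q$ is assumed to be a $0$-symmetric convex body; and the characterization \eqref{eq:Caracterizacion de normars razonables cruzadas} is stated in the excerpt for general normed spaces, so it applies verbatim here. This gives both implications at once, so the ``if and only if'' costs essentially nothing beyond invoking the two dictionaries.

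\textbf{The formulas on decomposable vectors.} Assuming now that \eqref{eq: inclusion convex bodies razonables} holds, I would prove $g_Q(x^1\otimes\cdots\otimes x^l)=g_{Q_1}(x^1)\cdots g_{Q_l}(x^l)$ by two inequalities. The upper bound $g_Q(x^1\otimes\cdots\otimes x^l)\le g_{Q_1}(x^1)\cdots g_{Q_l}(x^l)$ is immediate from $g_Q\le\pi$ together with the fact that $\pi$ is a crossnorm, so $\pi(x^1\otimes\cdots\otimes x^l)\le\|x^1\|_1\cdots\|x^l\|_l=g_{Q_1}(x^1)\cdots g_{Q_l}(x^l)$ (property (1) of a reasonable crossnorm; that $\pi$ actually attains this, i.e. is a crossnorm, is standard and recorded in the excerpt). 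For the lower bound I would use $g_Q\ge\epsilon$ and produce, for each $i$, a functional $\varphi_i\in\mathbb{R}^{d_i*}$ with $\|\varphi_i\|_{(\mathbb{R}^{d_i},\|\cdot\|_i)^*}\le 1$ and $\varphi_i(x^i)=\|x^i\|_i=g_{Q_i}(x^i)$ (Hahn--Banach / the support functional of $Q_i$ at $x^i$, using Theorem \ref{thm:(H.-Minkowski)} to identify the dual norm with $g_{Q_i^\circ}$). Then $\varphi_1\otimes\cdots\otimes\varphi_l$ evaluated on $x^1\otimes\cdots\otimes x^l$ equals $\prod_i g_{Q_i}(x^i)$, and since this is one of the functionals admitted in the supremum defining $\epsilon$, we get $\epsilon(x^1\otimes\cdots\otimes x^l)\ge\prod_i g_{Q_i}(x^i)$, hence $g_Q(x^1\otimes\cdots\otimes x^l)\ge\prod_i g_{Q_i}(x^i)$. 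Combining the two bounds yields the first identity.

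\textbf{The polar formula.} For the second identity I would apply the first one to the polar bodies. By Proposition \ref{prop: duality}(2), $Q^\circ$ sits between $Q_1^\circ\otimes_\pi\cdots\otimes_\pi Q_l^\circ$ and $Q_1^\circ\otimes_\epsilon\cdots\otimes_\epsilon Q_l^\circ$: indeed taking polars in \eqref{eq: inclusion convex bodies razonables} reverses the inclusions, and $(Q_1\otimes_\pi\cdots\otimes_\pi Q_l)^\circ=Q_1^\circ\otimes_\epsilon\cdots\otimes_\epsilon Q_l^\circ$ while $(Q_1\otimes_\epsilon\cdots\otimes_\epsilon Q_l)^\circ=Q_1^\circ\otimes_\pi\cdots\otimes_\pi Q_l^\circ$. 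Thus the pair $(Q^\circ; Q_1^\circ,\ldots,Q_l^\circ)$ satisfies the hypothesis \eqref{eq: inclusion convex bodies razonables} of the proposition with each $Q_i$ replaced by $Q_i^\circ$, and the already-proved first identity gives $g_{Q^\circ}(x^1\otimes\cdots\otimes x^l)=g_{Q_1^\circ}(x^1)\cdots g_{Q_l^\circ}(x^l)$, which is exactly the asserted formula.

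\textbf{Main obstacle.} Genuinely there is no deep obstacle: the whole argument is a careful bookkeeping of the three dictionaries (Minkowski functional $\leftrightarrow$ norm; \eqref{eq:Caracterizacion de normars razonables cruzadas}; and \eqref{eq: projec convex bodies is unit ball proj norm}--\eqref{eq: pinjec convex bodies is unit ball inj norm} identifying $\otimes_\pi,\otimes_\epsilon$ of bodies with the unit balls of $\pi,\epsilon$). The one point that requires a moment's care is the lower bound $\epsilon(x^1\otimes\cdots\otimes x^l)\ge\prod_i g_{Q_i}(x^i)$: one should make sure each support functional $\varphi_i$ is normalized in the correct dual norm $g_{Q_i^\circ}$ and that $\varphi_1\otimes\cdots\otimes\varphi_l$ is therefore an admissible competitor in the definition of $\epsilon$ (its norm in the relevant dual being $\le\prod\|\varphi_i\|\le 1$), which is where Theorem \ref{thm:(H.-Minkowski)} is used. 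Everything else is routine.
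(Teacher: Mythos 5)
Your proposal is correct and takes essentially the same approach as the paper: both identify $Q_1\otimes_\pi\cdots\otimes_\pi Q_l$ and $Q_1\otimes_\epsilon\cdots\otimes_\epsilon Q_l$ as the unit balls of $\pi$ and $\epsilon$ via \eqref{eq: projec convex bodies is unit ball proj norm}--\eqref{eq: pinjec convex bodies is unit ball inj norm} and then invoke the characterization \eqref{eq:Caracterizacion de normars razonables cruzadas}. The paper disposes of the two displayed formulas by remarking that they ``follow using the two properties that define a reasonable crossnorm,'' which is exactly the bookkeeping you spell out; your route to the polar formula (apply the first identity to $Q^\circ, Q_i^\circ$ via Proposition~\ref{prop: duality}) is a clean equivalent of invoking property (2) directly.
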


\begin{proof}
Let $Q_i\subset\mathbb{R}^{d_i},$ $i=1,\dots,l,$ be $0$-symmetric convex bodies. Then, (\ref{eq: projec convex bodies is unit ball proj norm}) and (\ref{eq: pinjec convex bodies is unit ball inj norm}) tell us that $Q_{1}\otimes_{\pi}\cdots\otimes_{\pi}Q_{l}$  is the closed unit ball of $\otimes_{\pi,i=1}^{l}\left(\mathbb{R}^{d_{i}},g_{Q_i}\right),$ and $Q_{1}\otimes_{\epsilon}\cdots\otimes_{\epsilon}Q_{l}$ is the closed unit ball of $\otimes_{\epsilon,i=1}^{l}\left(\mathbb{R}^{d_{i}},g_{Q_i}\right).$ Now, the proof of the first part  follows from  the characterization  of a reasonable crossnorm  (\ref{eq:Caracterizacion de normars razonables cruzadas}). The second part follows  using the two properties that define  being a  reasonable crossnorm.
\end{proof}

This proposition can be understood as the definition of a reasonable crossnorm written in terms of convex bodies. It determines when a $0$-symmetric convex body in  $\otimes_{i=1}^{l}\mathbb{R}^{d_{i}}$ is the unit ball of a reasonable crossnorm  when the norms on each $\mathbb{R}^{d_{i}}$  are fixed ($ g_{Q_i}$).  Our next result goes further: it determines when a $0$-symmetric convex body in $\otimes_{i=1}^{l}\mathbb{R}^{d_{i}}$
 is the unit ball of a reasonable crossnorm, with respect to some norms  (not determined a priori) on the spaces $\mathbb{R}^{d_{i}}$.

For every non-zero decomposable vector $\mathbf{a}\in\otimes_{i=1}^{l}\mathbb{R}^{d_{i}}$ and every $0$-symmetric convex body $Q\subset\otimes_{i=1}^{l}\mathbb{R}^{d_{i}}$ . If $\mathbf{a}=a^{1}\otimes\cdots\otimes a^{l},$
consider the  $0$-symmetric convex bodies in $\mathbb{R}^{d_i},$ $i=1,\ldots,l$, defined as:
\begin{equation}\label{eq: Q1 Ql}
Q_{i}^{a^{1},\ldots,a^{l}}:=\left\{ x^{i}\in\mathbb{R}^{d_{i}}:a^{1}\otimes\cdots\otimes a^{i-1}\otimes x^{i}\otimes a^{i+1}\otimes\cdots\otimes a^{l}\in Q\right\}.
\end{equation}

\begin{thm}
\label{thm:caracterizacion bola de norma razon cruza}Let $Q\subset\otimes_{i=1}^{l}\mathbb{R}^{d_{i}}$ be a $0$-symmetric convex body. Then, there exist norms $\left\Vert \cdot\right\Vert _{i}$ on $\mathbb{R}^{d_{i}},$
$i=1,...,l,$ such that $Q$ is the closed unit ball of a reasonable
crossnorm on $\otimes_{i=1}^{l}\left(\mathbb{R}^{d_{i}},\left\Vert \cdot\right\Vert _{i}\right)$
if and only if for an arbitrary decomposable vector $a^{1}\otimes\cdots\otimes a^{l}\in\partial Q$ it holds:
\begin{alignat}{1}
\label{eq:ecuacionnnnn}
 & Q_{1}^{a^{1},\ldots,a^{l}}\otimes_{\pi}\cdots\otimes_{\pi}Q_{l}^{a^{1},\ldots, a^{l}}\subseteq Q
  \subseteq  Q_{1}^{a^{1},\ldots,a^{l}}\otimes_{\epsilon}\cdots\otimes_{\epsilon}Q_{l}^{a^{1},\ldots,a^{l}}.
\end{alignat}
In such a situation, $Q_{i}^{a^{1},\ldots,a^{l}}=\left\Vert a^i\right\Vert _{i}B_{\left(\mathbb{R}^{d_{i}},\left\Vert \cdot\right\Vert _{i}\right)}.$
\end{thm}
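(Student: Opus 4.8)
The plan is to reduce the theorem to Proposition \ref{prop:razonable cruzada version convexa} by showing that the sets $Q_i^{a^1,\ldots,a^l}$ defined by the slices in \eqref{eq: Q1 Ql} are \emph{exactly} the ``right'' factor bodies whenever $Q$ is the unit ball of a reasonable crossnorm, and conversely that if the slices satisfy the sandwich \eqref{eq:ecuacionnnnn} then $g_Q$ is a reasonable crossnorm on $\otimes_{i=1}^l(\mathbb{R}^{d_i},g_{Q_i^{a^1,\ldots,a^l}})$. The forward direction: suppose $Q = B_{(\otimes_i \mathbb{R}^{d_i},\alpha)}$ for a reasonable crossnorm $\alpha$ built from norms $\|\cdot\|_i$. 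Write $B_i := B_{(\mathbb{R}^{d_i},\|\cdot\|_i)}$. First I would observe that for a decomposable $\mathbf a = a^1\otimes\cdots\otimes a^l$, the crossnorm property gives $\alpha(a^1\otimes\cdots\otimes a^l) = \|a^1\|_1\cdots\|a^l\|_l$ (this is the standard fact that a reasonable crossnorm is a crossnorm on decomposable tensors, which follows from \eqref{eq:Caracterizacion de normars razonables cruzadas} since $\epsilon$ and $\pi$ agree on decomposables). Hence if $\mathbf a\in\partial Q$ then $\prod_i\|a^i\|_i = 1$. Then I would compute the slice: $x^i \in Q_i^{a^1,\ldots,a^l}$ iff $\alpha(a^1\otimes\cdots\otimes x^i\otimes\cdots\otimes a^l)\le 1$ iff $\|a^1\|_1\cdots\|x^i\|_i\cdots\|a^l\|_l\le1$ iff $\|x^i\|_i \le 1/\prod_{j\ne i}\|a^j\|_j = \|a^i\|_i$. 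Therefore $Q_i^{a^1,\ldots,a^l} = \|a^i\|_i B_i$, which is the last assertion of the theorem. Since $\|a^i\|_i B_i$ is just a positive rescaling of $B_i$, it has the same Minkowski functional up to a scalar, and the projective/injective tensor products scale multiplicatively in each factor; because $\prod_i\|a^i\|_i=1$, the scalars cancel and $Q_1^{a^1,\ldots,a^l}\otimes_\pi\cdots\otimes_\pi Q_l^{a^1,\ldots,a^l} = B_1\otimes_\pi\cdots\otimes_\pi B_l = B_{\otimes_{\pi,i}(\mathbb{R}^{d_i},\|\cdot\|_i)}$, and similarly for $\otimes_\epsilon$. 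Now \eqref{eq:ecuacionnnnn} is precisely \eqref{eq: inclusion convex bodies razonables} and follows from Proposition \ref{prop:razonable cruzada version convexa} (or directly from \eqref{eq:Caracterizacion de normars razonables cruzadas}).

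For the converse, assume that for \emph{some} decomposable $a^1\otimes\cdots\otimes a^l\in\partial Q$ the sandwich \eqref{eq:ecuacionnnnn} holds. Set $Q_i := Q_i^{a^1,\ldots,a^l}$; these are $0$-symmetric convex bodies in $\mathbb{R}^{d_i}$ — I should check nonempty interior and boundedness, which follows because $Q$ itself is a convex body and the slice is the preimage of $Q$ under the injective linear embedding $x^i\mapsto a^1\otimes\cdots\otimes x^i\otimes\cdots\otimes a^l$ (here $\mathbf a\ne 0$, so each $a^j\ne0$ and the map is a linear isomorphism onto its image, so preimages of bounded sets with interior are again bounded with interior). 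Then \eqref{eq:ecuacionnnnn} is exactly the hypothesis \eqref{eq: inclusion convex bodies razonables} of Proposition \ref{prop:razonable cruzada version convexa}, so $g_Q$ is a reasonable crossnorm on $\otimes_{i=1}^l(\mathbb{R}^{d_i},g_{Q_i})$; taking $\|\cdot\|_i := g_{Q_i}$ gives the required norms. Finally, to close the loop and get the last displayed identity in full generality, I note that once $g_Q$ is known to be a reasonable crossnorm, the forward-direction computation above applies and gives $Q_i^{a^1,\ldots,a^l} = g_{Q_i}(a^i)\,B_i = \|a^i\|_i B_{(\mathbb{R}^{d_i},\|\cdot\|_i)}$ for every decomposable boundary vector.

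The one genuinely delicate point — and the place where I expect to spend the most care — is the logical structure of the ``arbitrary'' versus ``some'' decomposable boundary vector. The theorem is stated with ``for an arbitrary decomposable vector $a^1\otimes\cdots\otimes a^l\in\partial Q$'', and one must make sure that (i) such a decomposable vector actually exists in $\partial Q$ (it does: $Q$ has nonempty interior, so rescaling any nonzero decomposable vector — e.g. $e_{j_1}^{d_1}\otimes\cdots\otimes e_{j_l}^{d_l}$ — by $1/g_Q(\cdot)$ lands on $\partial Q$, using that decomposables form a nonzero cone), and (ii) the condition, when it holds for one such vector, forces it to hold for all of them. Point (ii) is exactly what the computation $Q_i^{a^1,\ldots,a^l}=\|a^i\|_iB_i$ delivers: the slice bodies are, independently of which boundary point $\mathbf a$ we picked, all positive multiples of the fixed bodies $B_i$ with reciprocal product of scalars, so the sandwich \eqref{eq:ecuacionnnnn} is either true for all decomposable $\mathbf a\in\partial Q$ or for none. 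I would state this explicitly so the ``arbitrary'' in the hypothesis is justified and not merely asserted. Everything else is bookkeeping with the multiplicativity of $\otimes_\pi,\otimes_\epsilon$ under scaling of the factors and the identifications \eqref{eq: projec convex bodies is unit ball proj norm}, \eqref{eq: pinjec convex bodies is unit ball inj norm}.
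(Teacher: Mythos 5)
Your proposal is correct and follows essentially the same route as the paper: both directions reduce to Proposition \ref{prop:razonable cruzada version convexa}, with the forward direction computing that the slices $Q_i^{a^1,\ldots,a^l}$ equal $\left\Vert a^i\right\Vert_i B_i$ and using the constraint $\prod_i\left\Vert a^i\right\Vert_i=1$ so the rescalings cancel in $\otimes_{\pi}$ and $\otimes_{\epsilon}$. Your explicit discussion of the ``arbitrary versus some'' logical structure is a useful clarification that the paper leaves implicit, but it does not constitute a different method.
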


\begin{proof}

Suppose that $Q$ is the closed unit ball of
a reasonable crossnorm $\alpha\left(\cdot\right)$ on
$\otimes_{i=1}^{l}\left(\mathbb{R}^{d_{i}},\left\Vert \cdot\right\Vert _{i}\right).$

Clearly $g_{Q}\left(\cdot\right)=\alpha\left(\cdot\right)$ and  for each $x^{1}\otimes\cdots\otimes x^{l}\in\otimes_{i=1}^{l}\mathbb{R}^{d_{i}}$
we have:
\begin{equation}
\label{eq: for principal theorem}
g_{Q}\left( x^{1}\otimes\cdots\otimes x^{l}\right) =\left\Vert x^{1}\right\Vert _{1}\cdots\left\Vert x^{l}\right\Vert _{l}
\hspace{0.1cm}\mbox{and}\hspace{0.1cm}
\left\Vert \left\langle \cdot,x^{1}\otimes\cdots\otimes x^{l}\right\rangle _{H}\right\Vert  =\left\Vert \left\langle \cdot,x^{1}\right\rangle\right\Vert \cdots\left\Vert \left\langle \cdot,x^{l}\right\rangle \right\Vert,
\end{equation}
where $ \left\langle \cdot,x^{i}\right\rangle$ is a linear funtional on $\left(\mathbb{R}^{d_i},\Vert\cdot\Vert_{i}\right),$ $i=1,\ldots,l.$

Now, if we fix an arbitrary $a^{1}\otimes\cdots\otimes a^{l}\in\partial Q$, then
$g_{Q}\left(a^{1}\otimes\cdots\otimes a^{l}\right)=\left\Vert a^{1}\right\Vert _{1}\cdots\left\Vert a^{l}\right\Vert _{l}=1,$ and
\begin{align*}
g_{Q}\left( a^{1}\otimes\cdots a^{i-1}\otimes x^{i}\otimes a^{i+1}\otimes\cdots\otimes a^{l}\right) & =\left\Vert a^{1}\right\Vert _{1}\cdots\left\Vert a^{i-1}\right\Vert _{i-1}\left\Vert x^{i}\right\Vert _{i}\left\Vert a^{i+1}\right\Vert _{i+1}\cdots\left\Vert a^{l}\right\Vert _{l}\\
 & =\frac{1}{\left\Vert a^{i}\right\Vert _{i}}\left\Vert x^{i}\right\Vert _{i}.
\end{align*}
Thus, from the definition of $Q_{i}^{a^{1},\ldots,a^{l}},$
we obtain $g_{Q_{i}^{a^{1},\ldots,a^{l}}}\left(x^{i}\right)=\frac{1}{\left\Vert a^{i}\right\Vert _{i}}\left\Vert x^{i}\right\Vert _{i}$  for $i=1\ldots,l$ and  $Q_{i}^{a^{1},\ldots,a^{l}}=\left\Vert a^i\right\Vert _{i}B_{\left(\mathbb{R}^{d_{i}},\left\Vert \cdot\right\Vert _{i}\right)}.$
Since the latter is equivalent to $g_{\left({Q_{i}^{a^{1},\ldots,a^{l}}}\right)^{\circ}}\left(x^{i}\right)=\left\Vert a^{i}\right\Vert _{i}\left\Vert \left\langle \cdot,x^{i}\right\rangle \right\Vert,$ $i=1\ldots,l,$ then from (\ref{eq: for principal theorem}) and the previous equalities we have:
\[
g_{Q}\left(x^{1}\otimes\cdots\otimes x^{l}\right)=g_{Q_{1}^{a^{1},\ldots,a^{l}}}\left(x^{1}\right)\cdots g_{Q_{l}^{a^{1},\ldots,a^{l}}}\left(x^{l}\right)
\]
and
\begin{align*}
g_{Q^{\circ}} \left(x^{1}\otimes\cdots\otimes x^{l}\right)   &=\left\Vert \left\langle \cdot,x^{1}\otimes\cdots\otimes x^{l}\right\rangle _{H}\right\Vert =
  \left\Vert \left\langle \cdot,x^{1}\right\rangle \right\Vert \cdots\left\Vert \left\langle \cdot,x^{l}\right\rangle \right\Vert \\
  &=g_{\left(Q_{1}^{a^{1},\ldots,a^{l}}\right)^{\circ}}\left(x^{1}\right)\cdots g_{\left(Q_{l}^{a^{1},\ldots,a^{l}}\right)^{\circ}}\left(x^{l}\right).
\end{align*}
Therefore, by Proposition \ref{prop:razonable cruzada version convexa},  (\ref{eq:ecuacionnnnn}) holds.

To prove the converse, suppose that  $Q$ satisfies (\ref{eq:ecuacionnnnn})  for $a^{1}\otimes\cdots\otimes a^{l}\in\partial Q,$
then from Proposition \ref{prop:razonable cruzada version convexa}, we conclude that $g_{Q}$ is a reasonable crossnorm on $\otimes_{i=1}^{l}\left(\mathbb{R}^{d_{i}},g_{Q_{i}^{a^{1},\ldots,a^{l}}}\right).$ This completes the proof.
\end{proof}

Now, we introduce the formal notion of a tensorial body:

\begin{defn}
\label{def:tensorial convex bodies}
A $0$-symmetric convex body $Q\subset\otimes_{i=1}^{l}\mathbb{R}^{d_{i}}$ is called a \textbf{tensorial body in} $\mathbf{\otimes_{i=1}^{l}\mathbb{R}^{d_{i}}}$ if there exist $0$-symmetric convex bodies $Q_{i}\subset\mathbb{R}^{d_{i}},$  $i=1,...,l$ such that
\[
Q_{1}\otimes_{\pi}\cdots\otimes_{\pi}Q_{l}\subseteq Q\subseteq Q_{1}\otimes_{\epsilon}\cdots\otimes_{\epsilon}Q_{l}.
\]
\end{defn}

If $Q$ satisfies  the inclusions in Definition \ref{def:tensorial convex bodies}, we will say that $Q$ is a \textbf{tensorial body with respect to} $\mathbf{Q_1,\ldots,Q_l}.$ The set of tensorial bodies in $\otimes_{i=1}^{l}\mathbb{R}^{d_{i}}$ is denoted by $\mathcal{B}_{\otimes}\left(\otimes_{i=1}^{l}\mathbb{R}^{d_{i}}\right).$ The set of tensorial bodies with respect to $Q_{1},...,Q_{l}$ is denoted by $\mathcal{B}_{Q_{1},\ldots,Q_{l}}\left(\otimes_{i=1}^{l}\mathbb{R}^{d_{i}}\right).$

In the next corollary, we summarize the relation between tensorial bodies in $\otimes_{i=1}^{l}\mathbb{R}^{d_{i}}$ and reasonable crossnorms. We omit its proof, since it follows directly from Proposition \ref{prop:razonable cruzada version convexa} and Theorem \ref{thm:caracterizacion bola de norma razon cruza}.
\begin{cor}
\label{cor:equivalence of tensor body}
Let $Q\subset\otimes_{i=1}^{l}\mathbb{R}^{d_{i}}$ be a $0$-symmetric convex body. The following are equivalent:
\begin{enumerate}
\item $Q$ is a tensorial body in $\otimes_{i=1}^{l}\mathbb{R}^{d_{i}}.$
\item $Q$ satisfies (\ref{eq:ecuacionnnnn}) for any  $a^{1}\otimes\cdots\otimes a^{l}\in\partial Q.$
\item There exist norms $\left\Vert \cdot\right\Vert _{i}$ on $\mathbb{R}^{d_{i}},$
$i=1,...,l,$ such that $g_{Q}$ is a reasonable crossnorm on $\otimes_{i=1}^{l}\left(\mathbb{R}^{d_{i}},\left\Vert \cdot\right\Vert _{i}\right).$
\end{enumerate}
In this case, $g_{Q_{i}^{a^{1},\ldots,a^{l}}}\left(\cdot\right)=\frac{1}{\left\Vert a^{i}\right\Vert _{i}}\left\Vert\cdot\right\Vert _{i}$  for $i=1,\ldots,l.$
\end{cor}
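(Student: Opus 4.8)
The plan is to obtain the three-way equivalence by chaining the two results already available: Proposition \ref{prop:razonable cruzada version convexa}, which converts the sandwich inclusion into the reasonable-crossnorm property when the factor norms are prescribed, and Theorem \ref{thm:caracterizacion bola de norma razon cruza}, whose biconditional is precisely the passage between condition (2) and the existence of factor norms making $g_Q$ a reasonable crossnorm. Concretely, I would prove $(1)\Leftrightarrow(3)$ from the Proposition and $(2)\Leftrightarrow(3)$ from the Theorem; since both equivalences share condition (3), all three statements are equivalent. Throughout I will use Minkowski's correspondence (Theorem \ref{thm:(H.-Minkowski)}) tacitly: a $0$-symmetric convex body $Q_i\subset\mathbb{R}^{d_i}$ and the norm $g_{Q_i}$ it gauges carry the same data; conversely, in finite dimensions the closed unit ball of a norm $\left\Vert\cdot\right\Vert_i$ on $\mathbb{R}^{d_i}$ is a $0$-symmetric convex body whose Minkowski functional is $\left\Vert\cdot\right\Vert_i$; and $g_{\lambda B}=\lambda^{-1}g_B$ for $\lambda>0$.

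For $(1)\Rightarrow(3)$: assume $Q$ is a tensorial body with respect to $0$-symmetric convex bodies $Q_1,\ldots,Q_l$, i.e. $Q_1\otimes_\pi\cdots\otimes_\pi Q_l\subseteq Q\subseteq Q_1\otimes_\epsilon\cdots\otimes_\epsilon Q_l$. Set $\left\Vert\cdot\right\Vert_i:=g_{Q_i}$. By Proposition \ref{prop:razonable cruzada version convexa} this inclusion is equivalent to $g_Q$ being a reasonable crossnorm on $\otimes_{i=1}^l(\mathbb{R}^{d_i},g_{Q_i})=\otimes_{i=1}^l(\mathbb{R}^{d_i},\left\Vert\cdot\right\Vert_i)$, which is (3). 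For $(3)\Rightarrow(1)$: given norms $\left\Vert\cdot\right\Vert_i$ with $g_Q$ a reasonable crossnorm on $\otimes_{i=1}^l(\mathbb{R}^{d_i},\left\Vert\cdot\right\Vert_i)$, put $Q_i:=B_{(\mathbb{R}^{d_i},\left\Vert\cdot\right\Vert_i)}$; these are $0$-symmetric convex bodies with $g_{Q_i}=\left\Vert\cdot\right\Vert_i$, so Proposition \ref{prop:razonable cruzada version convexa} again yields the sandwich inclusion and $Q$ is a tensorial body. The equivalence $(2)\Leftrightarrow(3)$ is a verbatim restatement of Theorem \ref{thm:caracterizacion bola de norma razon cruza}, observing that the quantifier ``for any $a^1\otimes\cdots\otimes a^l\in\partial Q$'' in (2) matches ``for an arbitrary decomposable vector in $\partial Q$'' there, and that $\partial Q$ does contain decomposable vectors (e.g. $v/g_Q(v)$ with $v=e_1^{d_1}\otimes\cdots\otimes e_1^{d_l}$), so neither statement is vacuous.

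It remains to record the final identity under the hypothesis that $(1)$--$(3)$ hold. Fix a decomposable $a^1\otimes\cdots\otimes a^l\in\partial Q$ and let $\left\Vert\cdot\right\Vert_i$ be the norms furnished by (3). The closing clause of Theorem \ref{thm:caracterizacion bola de norma razon cruza} gives $Q_i^{a^1,\ldots,a^l}=\left\Vert a^i\right\Vert_i B_{(\mathbb{R}^{d_i},\left\Vert\cdot\right\Vert_i)}$; taking Minkowski functionals and using $g_{\lambda B}=\lambda^{-1}g_B$ yields $g_{Q_i^{a^1,\ldots,a^l}}(\cdot)=\frac{1}{\left\Vert a^i\right\Vert_i}\left\Vert\cdot\right\Vert_i$, as claimed. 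There is no substantive obstacle: the corollary merely repackages the Proposition and the Theorem, and the only points needing care are keeping the \emph{same} factor norms $\left\Vert\cdot\right\Vert_i$ throughout the cycle of implications—so that the concluding identity refers to the norms witnessing (3)—and checking that condition (2) is not vacuously satisfied.
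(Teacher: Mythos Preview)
Your proof is correct and follows exactly the approach indicated in the paper, which omits the proof and states that the corollary follows directly from Proposition~\ref{prop:razonable cruzada version convexa} and Theorem~\ref{thm:caracterizacion bola de norma razon cruza}. Your added remarks on nonvacuity of condition (2) and on keeping the same factor norms through the cycle are harmless clarifications.
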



\begin{prop}
\label{prop:tensorial bodies closed for polars scalar mult}
Let $Q\subset\otimes_{i=1}^{l}\mathbb{R}^{d_{i}}$ be a tensorial body. Then $Q^{\circ}$ and $\lambda Q$, $\lambda>0,$ are tensorial bodies. Indeed, if
$Q\in\mathcal{B}_{Q_{1},\ldots,Q_{l}}\left(\otimes_{i=1}^{l}\mathbb{R}^{d_{i}}\right)$ for some  $0$-symmetric convex bodies $Q_{i}\subset\mathbb{R}^{d_{i}},$  $i=1,...,l$, then
\begin{enumerate}
\item $Q^{\circ}\in\mathcal{B}_{Q_{1}^{\circ},...,Q_{l}^{\circ}}\left(\otimes_{i=1}^{l}\mathbb{R}^{d_{i}}\right).$

\item $\lambda Q\in\mathcal{B}_{ Q_{1},...,\left(\lambda Q_{k}\right),...,Q_{l}}\left(\otimes_{i=1}^{l}\mathbb{R}^{d_{i}}\right).$
\end{enumerate}
\end{prop}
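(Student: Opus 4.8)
The plan is to prove both statements by invoking the duality properties of the projective and injective tensor products of convex bodies (Proposition \ref{prop: duality}) together with the monotonicity of these operations with respect to inclusion, and then to observe that membership in $\mathcal{B}_{Q_1,\ldots,Q_l}$ is preserved under the operations of polarity and scalar multiplication because these operations commute appropriately with $\otimes_\pi$ and $\otimes_\epsilon$.

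For part (1), I start from the hypothesis $Q_{1}\otimes_{\pi}\cdots\otimes_{\pi}Q_{l}\subseteq Q\subseteq Q_{1}\otimes_{\epsilon}\cdots\otimes_{\epsilon}Q_{l}$ and apply the polar operation. Since polarity reverses inclusions (for $0$-symmetric convex bodies $A\subseteq B$ implies $B^\circ\subseteq A^\circ$), this gives
\[
\left(Q_{1}\otimes_{\epsilon}\cdots\otimes_{\epsilon}Q_{l}\right)^{\circ}\subseteq Q^{\circ}\subseteq\left(Q_{1}\otimes_{\pi}\cdots\otimes_{\pi}Q_{l}\right)^{\circ}.
\]
Now I apply Proposition \ref{prop: duality}: part (1) of that proposition says $\left(Q_{1}\otimes_{\epsilon}\cdots\otimes_{\epsilon}Q_{l}\right)^{\circ}=Q_{1}^{\circ}\otimes_{\pi}\cdots\otimes_{\pi}Q_{l}^{\circ}$, and part (2) says $\left(Q_{1}\otimes_{\pi}\cdots\otimes_{\pi}Q_{l}\right)^{\circ}=Q_{1}^{\circ}\otimes_{\epsilon}\cdots\otimes_{\epsilon}Q_{l}^{\circ}$. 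Substituting, we obtain exactly $Q_{1}^{\circ}\otimes_{\pi}\cdots\otimes_{\pi}Q_{l}^{\circ}\subseteq Q^{\circ}\subseteq Q_{1}^{\circ}\otimes_{\epsilon}\cdots\otimes_{\epsilon}Q_{l}^{\circ}$, i.e. $Q^{\circ}\in\mathcal{B}_{Q_{1}^{\circ},\ldots,Q_{l}^{\circ}}\left(\otimes_{i=1}^{l}\mathbb{R}^{d_{i}}\right)$. (I should also note that $Q^\circ$ is indeed a $0$-symmetric convex body, which holds because $Q$ is one.)

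For part (2), the key is that scaling a single factor by $\lambda>0$ scales the whole projective or injective tensor product by $\lambda$. Concretely, from the definition $Q_{1}\otimes_{\pi}\cdots\otimes_{\pi}Q_{l}=\operatorname{conv}\{x^1\otimes\cdots\otimes x^l:x^i\in Q_i\}$, replacing $Q_k$ by $\lambda Q_k$ and using homogeneity of the tensor map in the $k$-th slot gives $Q_{1}\otimes_{\pi}\cdots\otimes_{\pi}(\lambda Q_k)\otimes_{\pi}\cdots\otimes_{\pi}Q_{l}=\lambda\left(Q_{1}\otimes_{\pi}\cdots\otimes_{\pi}Q_{l}\right)$; the injective case follows from this together with the polarity identities of Proposition \ref{prop: duality} and the fact that $(\lambda C)^\circ=\lambda^{-1}C^\circ$, or directly from \eqref{eq: pinjec convex bodies is unit ball inj norm} since $g_{\lambda Q_k}=\lambda^{-1}g_{Q_k}$ and injective norms are crossnorms. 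Multiplying the hypothesis inclusions by $\lambda$ then yields
\[
\lambda\left(Q_{1}\otimes_{\pi}\cdots\otimes_{\pi}Q_{l}\right)\subseteq\lambda Q\subseteq\lambda\left(Q_{1}\otimes_{\epsilon}\cdots\otimes_{\epsilon}Q_{l}\right),
\]
and rewriting the two outer terms via the scaling identities gives $Q_{1}\otimes_{\pi}\cdots\otimes_{\pi}(\lambda Q_k)\otimes_{\pi}\cdots\otimes_{\pi}Q_{l}\subseteq\lambda Q\subseteq Q_{1}\otimes_{\epsilon}\cdots\otimes_{\epsilon}(\lambda Q_k)\otimes_{\epsilon}\cdots\otimes_{\epsilon}Q_{l}$, which is the claim $\lambda Q\in\mathcal{B}_{Q_{1},\ldots,(\lambda Q_k),\ldots,Q_{l}}\left(\otimes_{i=1}^{l}\mathbb{R}^{d_{i}}\right)$.

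I expect no serious obstacle here: the whole argument is a short bookkeeping exercise built on monotonicity of $\otimes_\pi,\otimes_\epsilon$ under inclusion, the duality identities already recorded in Proposition \ref{prop: duality}, and elementary scaling behavior. The only point requiring a word of care is justifying the scaling identity $\left(\otimes_\pi\right)(\ldots,\lambda Q_k,\ldots)=\lambda\left(\otimes_\pi\right)(\ldots,Q_k,\ldots)$, which is immediate from the explicit $\operatorname{conv}$ description of the projective tensor product of convex bodies, and the observation that $\mathcal{B}$-membership (being a $0$-symmetric convex body) is obviously stable under both operations.
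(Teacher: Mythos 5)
Your proof is correct and follows essentially the same route as the paper's: part (1) by taking polars of the defining inclusions and applying Proposition \ref{prop: duality}, part (2) by observing that scaling one factor scales the projective tensor product and deducing the injective case via the same duality identities. No substantive differences.
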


\begin{proof}
(1). If $Q\subset\otimes_{i=1}^{l}\mathbb{R}^{d_{i}}$ is a tensorial body with respect to $Q_1,\ldots, Q_l$ then
\begin{align*}
Q_{1}\otimes_{\pi}\cdots\otimes_{\pi}Q_{l} & \subseteq Q\subseteq Q_{1}\otimes_{\epsilon}\cdots\otimes_{\epsilon}Q_{l}.
\end{align*}
Thus, $\left(Q_{1}\otimes_{\epsilon}\cdots\otimes_{\epsilon}Q_{l}\right)^{\circ}\subseteq Q^{\circ}\subseteq\left(Q_{1}\otimes_{\pi}\cdots\otimes_{\pi}Q_{l}\right)^{\circ}.$ By Proposition \ref{prop: duality}, this implies that
$Q_{1}^{\circ}\otimes_{\pi}\cdots\otimes_{\pi}Q_{l}^{\circ}\subseteq Q^{\circ}\subseteq Q_{1}^{\circ}\otimes_{\epsilon}\cdots\otimes_{\epsilon}Q_{l}^{\circ}$ which is equivalent to $Q^{\circ}\in\mathcal{B}_{Q_{1}^{\circ},...,Q_{l}^{\circ}}\left(\otimes_{i=1}^{l}\mathbb{R}^{d_{i}}\right).$

(2).  We  assume,  w.l.o.g., that $k=1$. To prove this part, it is enough to observe that, by definition, for each real number $\lambda>0$, we have $\lambda(Q_{1}\otimes_{\pi}\cdots\otimes_{\pi}Q_{l})=\left(\lambda Q_{1}\right)\otimes_{\pi}\cdots\otimes_{\pi}Q_{l}$
and 
\begin{align*}
\lambda Q_{1}\otimes_{\epsilon}\cdots\otimes_{\epsilon}Q_{l} &=\lambda\left({Q_{1}^{\circ}\otimes_{\pi}\cdots\otimes_{\pi}Q_{l}^{\circ}}\right)^{\circ} \\
 & =\left(\lambda^{-1}({Q_{1}^{\circ}\otimes_{\pi}\cdots\otimes_{\pi}Q_{l}^{\circ}})\right)^{\circ}\\
  &=\left(\left(\lambda Q_{1}\right)^{\circ}\otimes_{\pi}\cdots\otimes_{\pi}Q_{l}^{\circ}\right)^{\circ}
  =\left(\lambda Q_{1}\right)\otimes_{\epsilon}\cdots\otimes_{\epsilon}Q_{l}.
\end{align*}
From this, it follows that $\lambda Q\in\mathcal{B}_{\left(\lambda Q_{1}\right),...,Q_{l}}\left(\otimes_{i=1}^{l}\mathbb{R}^{d_{i}}\right),$ if $Q\in\mathcal{B}_{Q_{1},...,Q_{l}}\left(\otimes_{i=1}^{l}\mathbb{R}^{d_{i}}\right).$
\end{proof}

A  tensorial body in  $\otimes_{i=1}^{l}\mathbb{R}^{d_{i}}$ is a tensorial body with respect
 to an essentially unique $l$-tuple of convex bodies. More precisely:
\begin{prop}
\label{prop:unicidad de secciones}Let $P_{i}, Q_{i}\subset\mathbb{R}^{d_{i}},$ $i=1\ldots,l$ be $0$-symmetric convex bodies. If
\[
Q\in\mathcal{B}_{P_{1},\ldots,P_{l}}\left(\otimes_{i=1}^{l}\mathbb{R}^{d_{i}}\right)\cap\mathcal{B}_{Q_{1},\ldots,Q_{l}}\left(\otimes_{i=1}^{l}\mathbb{R}^{d_{i}}\right),
\]
then there exist real numbers $\lambda_{i}>0$, $i=1,\ldots,l$ such that $\lambda_{1}\cdots\lambda_{l}=1$ and $P_{i}=\lambda_{i}Q_{i}$ for $i=1,...,l$.
\end{prop}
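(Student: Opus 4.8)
The plan is to exploit Theorem~\ref{thm:caracterizacion bola de norma razon cruza} together with the uniqueness statement at the end of its proof. The key observation is that being a tensorial body with respect to $Q_1,\ldots,Q_l$ is equivalent (by Corollary~\ref{cor:equivalence of tensor body}) to $g_Q$ being a reasonable crossnorm on $\otimes_{i=1}^l(\mathbb{R}^{d_i},\|\cdot\|_i)$ where $\|\cdot\|_i$ is the norm whose unit ball is $Q_i$, and similarly for the $P_i$. Thus from the hypothesis we get two $l$-tuples of norms, say $\|\cdot\|_i^{Q}$ with unit ball $Q_i$ and $\|\cdot\|_i^{P}$ with unit ball $P_i$, such that $g_Q$ is simultaneously a reasonable crossnorm with respect to both. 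The heart of the matter is then the following rigidity fact: if a norm on $\otimes_{i=1}^l V_i$ is a crossnorm (multiplicative on decomposables) with respect to two choices of norms on the factors, those factor norms must differ only by positive scalars with product $1$.

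First I would fix a nonzero decomposable vector $a^1\otimes\cdots\otimes a^l$, rescaled so that it lies on $\partial Q$; this is possible since $g_Q$ is multiplicative on decomposables and positive on each nonzero $a^i$. By Theorem~\ref{thm:caracterizacion bola de norma razon cruza} applied with the two tuples, we obtain
\[
Q_i^{a^1,\ldots,a^l}=\|a^i\|_i^{Q}\,Q_i\qquad\text{and}\qquad Q_i^{a^1,\ldots,a^l}=\|a^i\|_i^{P}\,P_i
\]
for each $i$, where the left-hand side $Q_i^{a^1,\ldots,a^l}$ is the section of $Q$ defined in~(\ref{eq: Q1 Ql}) and depends only on $Q$ and on $a^1\otimes\cdots\otimes a^l$, not on which tuple we started with. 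Setting $\lambda_i:=\|a^i\|_i^{P}/\|a^i\|_i^{Q}>0$ we immediately get $P_i=\lambda_i Q_i$ for all $i$. It remains to check $\lambda_1\cdots\lambda_l=1$: this follows because, by the multiplicativity in~(\ref{eq: for principal theorem}) (or equivalently $g_Q(a^1\otimes\cdots\otimes a^l)=\prod_i g_{Q_i}(a^i)=\prod_i g_{P_i}(a^i)=1$), we have $\prod_i \|a^i\|_i^{Q}=\prod_i\|a^i\|_i^{P}=1$, hence $\prod_i\lambda_i=\prod_i\|a^i\|_i^{P}/\prod_i\|a^i\|_i^{Q}=1$.

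I expect the only subtlety to be the clean use of the last sentence of Theorem~\ref{thm:caracterizacion bola de norma razon cruza}, namely that the section $Q_i^{a^1,\ldots,a^l}$ is genuinely intrinsic to $Q$ and the base point — once that is in hand, the argument is essentially immediate, since the two representations of the same section force the $P_i$ and $Q_i$ to be dilates. One should also remark that the choice of the base decomposable vector on $\partial Q$ does not matter for the conclusion: different choices rescale each $\|\cdot\|_i^{Q}$ by positive constants of product $1$, which is exactly the ambiguity already built into the statement. So the proof is short: apply Theorem~\ref{thm:caracterizacion bola de norma razon cruza} to both tuples at a common boundary decomposable vector, read off $P_i=\lambda_i Q_i$, and use multiplicativity of $g_Q$ on that vector to get $\prod_i\lambda_i=1$.
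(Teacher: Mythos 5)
Your proof is correct in substance and follows essentially the same route as the paper: fix a decomposable boundary point $a^1\otimes\cdots\otimes a^l\in\partial Q$, exploit multiplicativity of $g_Q$ on decomposables with respect to both factorizations, and read off that $P_i$ and $Q_i$ are dilates. The only difference is stylistic — you invoke the explicit formula $Q_i^{a^1,\ldots,a^l}=\|a^i\|_i\,B_{(\mathbb{R}^{d_i},\|\cdot\|_i)}$ from the last line of Theorem~\ref{thm:caracterizacion bola de norma razon cruza}, while the paper re-derives the same identity directly from the multiplicativity statement of Proposition~\ref{prop:razonable cruzada version convexa}; both are the same computation, and your version is a bit more economical because it reuses a result already on the books.

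One small slip: from
\[
\|a^i\|_i^Q\, Q_i \;=\; Q_i^{a^1,\ldots,a^l}\;=\;\|a^i\|_i^P\, P_i
\]
you get $P_i=\bigl(\|a^i\|_i^Q/\|a^i\|_i^P\bigr)\,Q_i$, so the correct choice is $\lambda_i:=\|a^i\|_i^Q/\|a^i\|_i^P$, the reciprocal of what you wrote (this matches the paper's $\lambda_i=g_{Q_i}(a^i)/g_{P_i}(a^i)$). It does not affect the conclusion $\prod_i\lambda_i=1$, which still follows because $\prod_i\|a^i\|_i^Q=\prod_i\|a^i\|_i^P=1$, but the formula for $\lambda_i$ as stated is inverted.
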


\begin{proof}
Let $g_{Q},g_{Q_{i}}$ and $g_{P_{i}}$ be the Minkowski functionals
associated to $Q,Q_{i}$ and $P_{i}$ respectively. If $Q$ is a tensorial body with respect to $P_{i},$ $i=1,\ldots,l,$ and with respect to $Q_{i},$ $i=1,\ldots,l,$ then Proposition \ref{prop:razonable cruzada version convexa} implies that:
\begin{equation*}
g_{P_{1}}\left(x^{1}\right)\cdots g_{P_{l}}\left(x^{l}\right)=g_{Q}\left(x^{1}\otimes\cdots\otimes x^{l}\right)=g_{Q_{1}}\left(x^{1}\right)\cdots g_{Q_{l}}\left(x^{l}\right).
\end{equation*}
Therefore, if we fix $a^{1}\otimes\cdots\otimes a^{l}\in\partial Q,$
then
\[
g_{Q_{1}}\left(a^{1}\right)\cdots g_{Q_{l}}\left(a^{l}\right)=g_{P_{1}}\left(a^{1}\right)\cdots g_{P_{l}}\left(a^{l}\right)=1.
\]
Analogously, for $a^1\otimes\cdots\otimes a^{i-1}\otimes x^i\otimes a^{i+1}\otimes\cdots\otimes a^{l},$ $i=1,...,l,$ we have:
\begin{gather*}
g_{Q_{1}}\left(a^{1}\right)\cdots g_{Q_{i-1}}\left(a^{i-1}\right)g_{Q_{i}}\left(x^{i}\right)g_{Q_{i+1}}\left(a^{i+1}\right)\cdots g_{Q_{l}}\left(a^{l}\right)=\\
g_{P_{1}}\left(a^{1}\right)\cdots g_{P_{i-1}}\left(a^{i-1}\right)g_{P_{i}}\left(x^{i}\right)g_{P_{i+1}}\left(a^{i+1}\right)\cdots g_{P_{l}}\left(a^{l}\right).
\end{gather*}

Now, if we multiply both sides of the above equation by $g_{Q_{i}}\left(a^{i}\right)g_{P_{i}}\left(a^{i}\right),$
we obtain that 
$
g_{P_{i}}\left(a^{i}\right)g_{Q_{i}}\left(x^{i}\right)=g_{Q_{i}}\left(a^{i}\right)g_{P_{i}}\left(x^{i}\right),
$
 which is equivalent to $g_{P_{i}}\left(x^{i}\right)=\frac{g_{P_{i}}\left(a^{i}\right)}{g_{Q_{i}}\left(a^{i}\right)}g_{Q_{i}}\left(x^{i}\right).$ Thus, if $\lambda_{i}:=\frac{g_{Q_{i}}\left(a^{i}\right)}{g_{P_{i}}\left(a^{i}\right)}$, $i=1,...,l$ then we have proved that $\lambda_{1}\cdots\lambda_{l}=1$ and $P_{i}=\lambda_{i}Q_{i},$ as  required.
\end{proof}

In order to simplify the arguments, we will choose the   convex bodies defined  (\ref{eq: Q1 Ql}) in a specific way: for every $0$-symmetric convex body $Q\subset\otimes_{i=1}^{l}\mathbb{R}^{d_{i}},$  $Q^{i}$  will denote the convex bodies generated by $e_{1}^{d_{1}}\otimes\cdots\otimes \left(\lambda e_{1}^{d_{l}}\right),$ $\lambda=\frac{1}{g_{Q}\left(e_{1}^{d_{1}}\otimes\cdots\otimes e_{1}^{d_{l}}\right)}.$ That is, $Q^i:=Q_i^{e_{1}^{d_1},\ldots,\lambda e_{l}^{d_l}}$ for $i=1,\ldots,l.$

\begin{prop}
\label{prop:set tensorial bodies closed}
If $Q_n,$ $n\in\mathbb{N},$ are tensorial bodies in $\otimes_{i=1}^{l}\mathbb{R}^{d_{i}}$ such that $g_{Q_n}$ converges uniformly on compact sets to $g_{Q},$ for some $0$-symmetric convex body $Q,$ then $Q$ is a tensorial body in $\otimes_{i=1}^{l}\mathbb{R}^{d_{i}}.$ In this case, $g_{Q_n^i}$ and $g_{\left(Q_n^i\right)^{\circ}}$  converge uniformly on compact sets to $g_{Q^i}$ and  $g_{\left(Q^i\right)^{\circ}},$ respectively.
\end{prop}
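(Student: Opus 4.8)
The plan is to use the characterization of tensorial bodies provided by Proposition \ref{prop:razonable cruzada version convexa} together with the explicit sections $Q_n^i$ of \eqref{eq: Q1 Ql}. First I would record that, by Proposition \ref{prop:razonable cruzada version convexa} applied to each $Q_n$ with respect to $Q_n^1,\ldots,Q_n^l$, for every decomposable vector $x^1\otimes\cdots\otimes x^l$ we have $g_{Q_n}(x^1\otimes\cdots\otimes x^l)=g_{Q_n^1}(x^1)\cdots g_{Q_n^l}(x^l)$ and the analogous identity for the polars. Specializing $x^1\otimes\cdots\otimes x^l$ to a vector of the form $e_1^{d_1}\otimes\cdots\otimes x^i\otimes\cdots\otimes e_1^{d_l}$ (with the appropriate normalizing scalar $\lambda_n=1/g_{Q_n}(e_1^{d_1}\otimes\cdots\otimes e_1^{d_l})$ absorbed), the definition of $Q_n^i$ gives a formula expressing $g_{Q_n^i}(x^i)$ directly in terms of $g_{Q_n}$ evaluated at a decomposable vector that depends on $x^i$ only through the $i$-th slot.

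The core step is then a convergence argument: since $g_{Q_n}\to g_Q$ uniformly on compact sets and $g_Q$ is the gauge of a $0$-symmetric convex body, the scalars $\lambda_n=1/g_{Q_n}(e_1^{d_1}\otimes\cdots\otimes e_1^{d_l})$ converge to $\lambda=1/g_Q(e_1^{d_1}\otimes\cdots\otimes e_1^{d_l})\in(0,\infty)$; plugging this into the slotwise formula for $g_{Q_n^i}$ and using uniform convergence on the compact image under the continuous embedding map shows that $g_{Q_n^i}$ converges uniformly on compact sets of $\mathbb{R}^{d_i}$ to a function that, by construction, equals $g_{Q^i}$ once we know $Q$ is tensorial — so we must get tensoriality simultaneously. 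To that end I would pass to the limit in the product identity: the pointwise limit $f_i(x^i):=\lim_n g_{Q_n^i}(x^i)$ is a gauge of a $0$-symmetric convex body $\tilde Q_i$ (it is positively homogeneous, subadditive as a limit of such, and bounded above and below by positive multiples of $\|\cdot\|_2$ because the $Q_n^i$ are uniformly sandwiched between Euclidean balls — this last uniformity follows from the uniform convergence $g_{Q_n}\to g_Q$ and the slotwise formulas). Then for every decomposable vector, $g_Q(x^1\otimes\cdots\otimes x^l)=\lim_n g_{Q_n}(x^1\otimes\cdots\otimes x^l)=\lim_n\prod_i g_{Q_n^i}(x^i)=\prod_i f_i(x^i)=\prod_i g_{\tilde Q_i}(x^i)$, and similarly for the polar; by Proposition \ref{prop:razonable cruzada version convexa} this forces \eqref{eq: inclusion convex bodies razonables} for $Q$ with respect to $\tilde Q_1,\ldots,\tilde Q_l$, i.e. $Q$ is a tensorial body, and by the uniqueness Proposition \ref{prop:unicidad de secciones} (together with the normalization built into $Q^i$) we get $\tilde Q_i=Q^i$, hence the claimed uniform convergence $g_{Q_n^i}\to g_{Q^i}$; the polar statement $g_{(Q_n^i)^\circ}\to g_{(Q^i)^\circ}$ follows from the same decomposable-polar identity or, alternatively, from the fact that on a fixed finite-dimensional space the map sending a gauge to the gauge of its polar is continuous for uniform convergence on compacta.

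The step I expect to be the main obstacle is establishing the \emph{uniform} (as opposed to merely pointwise) lower and upper bounds on the sections $Q_n^i$ — equivalently, showing that $\{Q_n^i\}_n$ stays in a fixed compact subset of $\mathcal{B}(d_i)$ for the Banach--Mazur/Hausdorff topology — so that the limit $f_i$ is genuinely the gauge of a nondegenerate convex body rather than possibly degenerating. This is where one must exploit that the convergence $g_{Q_n}\to g_Q$ is uniform on compacta (so in particular $g_{Q_n}$ is uniformly bounded above and below on the Euclidean sphere by positive constants), combined with the slotwise multiplicative formula $g_{Q_n^i}(x^i)=g_{Q_n}(\lambda_n e_1^{d_1}\otimes\cdots\otimes x^i\otimes\cdots\otimes e_1^{d_l})$ to transfer those bounds to each factor; once this is in place the rest is bookkeeping with continuity of $\otimes$ and the already-proved Propositions \ref{prop:razonable cruzada version convexa} and \ref{prop:unicidad de secciones}.
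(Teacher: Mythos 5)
Your proposal is correct and takes essentially the same route as the paper: the slot formula expressing $g_{Q_n^i}$ in terms of $g_{Q_n}$ (with the normalizing scalar $\lambda_n\to\lambda\in(0,\infty)$) transfers the convergence to the sections, and the decomposable-vector identities of Proposition \ref{prop:razonable cruzada version convexa} then pass to the limit, giving tensoriality of $Q$ via that proposition. The one simplification you miss is that $Q^i$ is defined by the \emph{same} slot formula applied to $Q$, which is already a $0$-symmetric convex body by hypothesis; hence the pointwise limit of $g_{Q_n^i}$ equals $g_{Q^i}$ on the nose and nondegeneracy is automatic, so the detour through $\tilde{Q}_i$, the appeal to Proposition \ref{prop:unicidad de secciones}, and the uniform sandwich bounds you identify as the main obstacle are all unnecessary. (For upgrading pointwise to uniform convergence of the gauges, the paper simply cites \cite[Theorem 1.8.12]{Schneider1993}.)
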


\begin{proof}

Since $Q_n,$ $n\in\mathbb{N},$ are tensorial bodies, then (2) of Corollary \ref{cor:equivalence of tensor body} implies that $Q_{n}^{1}\otimes_{\pi}\cdots\otimes_{\pi}Q_{n}^{l}\subseteq Q_n\subseteq Q_{n}^{1}\otimes_{\epsilon}\cdots\otimes_{\epsilon}Q_{n}^{l},$ for $n\in\mathbb{N}.$

Suppose that we already proved the uniform convergence (on compact sets) of $g_{Q_n^i}$ to $g_{Q^i}.$ From this, it follows that  $g_{\left(Q_n^i\right)^{\circ}}$ converges uniformly on compact sets to $g_{\left(Q^i\right)^{\circ}}.$
Thus,  we get:
\begin{align*}
g_{Q}\left(x^1\otimes\cdots\otimes x^l\right) &=\text{lim}_{n\rightarrow\infty}g_{Q_{n}}\left(x^1\otimes\cdots\otimes x^l\right)=\text{lim}_{n\rightarrow\infty}g_{Q^1_{n}}\left(x^1\right)\cdots g_{Q^l_{n}}\left(x^l\right) \\&=g_{Q^1}\left(x^1\right)\cdots g_{Q^l}\left(x^l\right).
\end{align*}
Similarly, since the uniform convergence of $g_{Q_n}$ to $g_{Q}$ implies the convergence $g_{Q_n^{\circ}}$ to $g_{Q^{\circ}},$  we have  $g_{Q^{\circ}}\left(x^1\otimes\cdots\otimes x^l\right)=g_{\left(Q^1\right)^{\circ}}\left(x^1\right)\cdots g_{\left(Q^l\right)^{\circ}}\left(x^l\right).$ Therefore, from Proposition \ref{prop:razonable cruzada version convexa}, $Q$ is a tensorial body w.r.t. $Q^i,$ $i=1,\ldots,l.$

Now, we turn to prove that for each $i=1,\ldots,l,$ $g_{Q_n^i}$ converges pointwise to $g_{Q^i}.$ Then, by \cite[Theorem 1.8.12]{Schneider1993}, we know that this implies the uniform convergence. From the convergence of  $g_{Q_n}$ to $g_{Q},$ and the definition of $Q^l,Q_n^{l},$ it follows directly that $g_{Q^l_{n}}$ converges pointwise to $g_{Q^l}.$
For the case $i=1,\dots,l-1,$ it is enough to observe that
\[
g_{Q^i_{n}}\left(x^i\right)=\frac{1}{g_{Q_n}\left(e_{1}^{d_{1}}\otimes\cdots\otimes e_{1}^{d_{l}}\right)}g_{Q_n}\left(e_{1}^{d_{1}}\otimes\cdots\otimes e_{1}^{d_{i-1}}\otimes x^{i}\otimes e_{1}^{d_{i+1}}\otimes\cdots\otimes e_{1}^{d_{l}}\right),
\]
for  $x^i\in\mathbb{R}^{d_i}.$
Thus, from the definiton of $Q^i$ and the convergence of $g_{Q_n}$ to $g_{Q},$ we know  that $g_{Q^i_{n}}$ converges pointwise $g_{Q^i}.$
\end{proof}

\subsection{Examples of tensorial bodies}

\

\subsubsection{The trivial case}
Every $0$-symmetric convex body $Q\subset\mathbb{R}\otimes\mathbb{R}^{d}$ is a tensorial body in $\mathbb{R}\otimes\mathbb{R}^{d}:$

 \begin{prop}
 Let $Q\subset\mathbb{R}\otimes\mathbb{R}^{d}$ be a $0$-symmetric convex body.  Then
$
 Q=\left[-1,1\right]\otimes_{\pi}\tilde{Q}
$
 where $\left[-1,1\right]=\left\{\lambda\in\mathbb{R}:-1\leq\lambda\leq1\right\}$ and $\tilde{Q}:=\{x\in\mathbb{R}^{d}:1\otimes x\in Q\}.$
 \end{prop}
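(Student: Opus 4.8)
The plan is to exploit the triviality of the single nontrivial factor. The canonical map $\iota\colon\mathbb{R}^{d}\to\mathbb{R}\otimes\mathbb{R}^{d}$, $x\mapsto 1\otimes x$, is a linear isomorphism, and \emph{every} vector of $\mathbb{R}\otimes\mathbb{R}^{d}$ is decomposable, since $\sum_{j}\lambda_{j}\otimes x_{j}=1\otimes\bigl(\sum_{j}\lambda_{j}x_{j}\bigr)$ and, more generally, $\lambda\otimes x=1\otimes(\lambda x)$. First I would record that, under $\iota$, the set $\tilde{Q}=\{x\in\mathbb{R}^{d}:1\otimes x\in Q\}=\iota^{-1}(Q)$ is a $0$-symmetric convex body in $\mathbb{R}^{d}$: compactness, convexity, nonempty interior and central symmetry all transfer through the linear homeomorphism $\iota$. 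Hence $[-1,1]\otimes_{\pi}\tilde{Q}$ is a well-defined $0$-symmetric convex body in $\mathbb{R}\otimes\mathbb{R}^{d}$ and $\iota(\tilde{Q})=Q$.

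Next I would compute the projective tensor product straight from its definition,
\[
[-1,1]\otimes_{\pi}\tilde{Q}=\mathrm{conv}\bigl\{\lambda\otimes x:\lambda\in[-1,1],\ x\in\tilde{Q}\bigr\}.
\]
The key observation is $\lambda\otimes x=1\otimes(\lambda x)=\iota(\lambda x)$, so the generating set equals $\iota\bigl(\{\lambda x:|\lambda|\le 1,\ x\in\tilde{Q}\}\bigr)$. Since $\tilde{Q}$ is convex and $0\in\tilde{Q}$ (a consequence of central symmetry and convexity), for $x\in\tilde{Q}$ and $\lambda\in[-1,1]$ the point $\lambda x$ is the convex combination $|\lambda|\,(\mathrm{sgn}\,\lambda)x+(1-|\lambda|)\,0$ of two points of $\tilde{Q}$, hence lies in $\tilde{Q}$; conversely $\tilde{Q}$ is recovered taking $\lambda=1$. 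Thus $\{\lambda x:|\lambda|\le1,\ x\in\tilde{Q}\}=\tilde{Q}$, the generating set is exactly $\iota(\tilde{Q})=Q$, and taking convex hulls (which leaves the convex set $Q$ unchanged) gives $[-1,1]\otimes_{\pi}\tilde{Q}=Q$.

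There is no genuinely hard step here: the statement is a sanity check showing that in the degenerate one-factor situation every $0$-symmetric convex body is tensorial, with the two extreme tensor products collapsing onto it (applying the same argument to $\tilde{Q}^{\circ}$ together with Proposition \ref{prop: duality} and biduality also yields $Q=[-1,1]\otimes_{\epsilon}\tilde{Q}$). The only points needing a line of care are verifying that $\tilde{Q}$ inherits the convex-body structure from $Q$ through $\iota$, and the elementary remark that a $0$-symmetric convex set is star-shaped about the origin, which is what makes the union of the scalings $\lambda\tilde{Q}$, $|\lambda|\le1$, collapse back to $\tilde{Q}$.
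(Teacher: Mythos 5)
Your proof is correct and follows essentially the same route as the paper's: identify that every element of $\mathbb{R}\otimes\mathbb{R}^{d}$ is decomposable as $1\otimes\tilde{u}$, so that membership in $Q$ translates to membership in $\tilde{Q}$, and then unwind the definition of $[-1,1]\otimes_{\pi}\tilde{Q}$. You supply more of the bookkeeping (checking that $\tilde{Q}$ is a convex body, that the generating set of the convex hull already equals $Q$ via the star-shapedness of $\tilde{Q}$) where the paper simply says ``from the definition of $[-1,1]\otimes_{\pi}\tilde{Q}$ we obtain the desired result,'' but the underlying argument is the same.
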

 \begin{proof}
 Let $u\in\mathbb{R}\otimes\mathbb{R}^{d},$ then $u=\sum_{i=1}^N\lambda_i\otimes\ x_{i}=1\otimes\left(\sum_{i=1}^N\lambda_{i}x_{i}\right).$ Thus, $u\in Q$ if and only if $u=1\otimes\tilde{u}$ with $\tilde{u}=\sum_{i=1}^N\lambda_{i}x_{i}\in \tilde{Q}.$  Therefore, from the definition of $\left[-1,1\right]\otimes_{\pi}\tilde{Q}$ we obtain the desired result.
 \end{proof}
The proposition above and (\ref{eq: projec convex bodies is unit ball proj norm}) show that every $0$-symmetric convex body $Q$ in $\mathbb{R}\otimes\mathbb{R}^{d}$ is the closed unit ball associated to the projective tensor norm on $\mathbb{R}\otimes\left(\mathbb{R}^{d},g_{\tilde{Q}}\right)$.
It is also worth to notice that on $\mathbb{R}\otimes\mathbb{R}^{d}$, the projective and the injective tensor product of $0$-symmetric convex bodies are equal.

\subsubsection{The closed unit balls of $\ell_{p}^d$.}

\begin{prop}
\label{prop: lp balls are tensorial bodies}
Let $d\in\mathbb{N}$. For every factorization of $d$ in natural numbers $d=d_1\cdots d_l$ and for every
$1\leq p\leq\infty$, $B_{p}^{d}=B_{p}^{d_{1},...,d_{l}}$ is a tensorial body in $\otimes_{i=1}^l\mathbb{R}^{d_i}$. It holds that
\begin{align*}
B_{p}^{d_{1}}\otimes_{\pi}\cdots\otimes_{\pi}B_{p}^{d_{l}}\subseteq B_{p}^{d}=B_{p}^{d_{1},...,d_{l}}\subseteq B_{p}^{d_{1}}\otimes_{\epsilon}\cdots\otimes_{\epsilon}B_{p}^{d_{l}},\textrm{ } 1\leq p\leq\infty.
\end{align*} In the cases where  $1<p<\infty$,  $d_i\geq2,$ $i=1,\ldots,l$, $B_{p}^{d_{1},...,d_{l}}$ is not the projective nor the injective tensor product of $B_p^{d_i},$ $i=1,\ldots,l.$
\end{prop}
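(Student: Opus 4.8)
The plan is to treat the two extreme cases $p=1$ and $p=\infty$ separately from the range $1<p<\infty$, since the former are already covered by Proposition \ref{prop: l1 linf balls}. For $p=1$ we have $B_1^d=B_1^{d_1}\otimes_\pi\cdots\otimes_\pi B_1^{d_l}$, so in particular $B_1^d\subseteq B_1^{d_1}\otimes_\epsilon\cdots\otimes_\epsilon B_1^{d_l}$ because the projective tensor product of convex bodies is always contained in the injective one (this follows from $\pi\geq\epsilon$ in (\ref{eq:Caracterizacion de normars razonables cruzadas}), together with (\ref{eq: projec convex bodies is unit ball proj norm}) and (\ref{eq: pinjec convex bodies is unit ball inj norm})). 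Symmetrically, $B_\infty^d=B_\infty^{d_1}\otimes_\epsilon\cdots\otimes_\epsilon B_\infty^{d_l}$ gives the $p=\infty$ case. So both endpoints are tensorial bodies with respect to the $B_p^{d_i}$.

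For $1<p<\infty$ the idea is to exhibit $B_p^d$ directly as the unit ball of a reasonable crossnorm on $\otimes_{i=1}^l\ell_p^{d_i}$ and then invoke Corollary \ref{cor:equivalence of tensor body} (equivalence of (1) and (3)), which immediately yields the desired sandwich $B_p^{d_1}\otimes_\pi\cdots\otimes_\pi B_p^{d_l}\subseteq B_p^d\subseteq B_p^{d_1}\otimes_\epsilon\cdots\otimes_\epsilon B_p^{d_l}$. Concretely, I would verify that the $\ell_p$-norm on $\mathbb{R}^d=\otimes_{i=1}^l\mathbb{R}^{d_i}$, read off the orthonormal basis $\{e_{j_1}^{d_1}\otimes\cdots\otimes e_{j_l}^{d_l}\}$, satisfies the two defining properties of a reasonable crossnorm relative to the $\ell_p^{d_i}$: first, on a decomposable vector $x^1\otimes\cdots\otimes x^l$ one has $\|x^1\otimes\cdots\otimes x^l\|_p=\|x^1\|_p\cdots\|x^l\|_p$ by the product formula for coordinates and Fubini for sums, so in particular the crossnorm inequality $\leq$ holds with equality; second, the dual of $\ell_p^d$ is $\ell_{p'}^d$ with $1/p+1/p'=1$, and under the identification of $(\otimes_{i=1}^l\mathbb{R}^{d_i})^*$ with $\otimes_{i=1}^l(\mathbb{R}^{d_i})^*$ via $\langle\cdot,\cdot\rangle_H$, a decomposable functional $x_1^*\otimes\cdots\otimes x_l^*$ acts with norm exactly $\|x_1^*\|_{p'}\cdots\|x_l^*\|_{p'}$, again by the same product-of-coordinates computation. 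Hence $\|\cdot\|_p$ is a reasonable crossnorm and $B_p^d$ is a tensorial body with respect to $B_p^{d_1},\ldots,B_p^{d_l}$.

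For the final assertion — that when $1<p<\infty$ and all $d_i\geq2$ the body $B_p^{d_1,\ldots,d_l}$ is neither the projective nor the injective tensor product of the $B_p^{d_i}$ — the plan is to compare Minkowski functionals on a suitable non-decomposable test vector, using the essential uniqueness of the defining $l$-tuple (Proposition \ref{prop:unicidad de secciones}). It suffices to do the case $l=2$, since for general $l$ one can group factors. Take $u=e_1^{d_1}\otimes e_1^{d_2}+e_2^{d_1}\otimes e_2^{d_2}$. A direct computation gives $\|u\|_p=2^{1/p}$. On the other hand, $\pi(u)$ with respect to $\ell_p^{d_1},\ell_p^{d_2}$ equals $2$ (the rank-one decompositions cannot beat the trivial one — this is the standard fact that $\pi$ on $\ell_p\hat\otimes\ell_p$ strictly exceeds the $\ell_p$-norm for $1<p<\infty$), and $\epsilon(u)=1$ (testing against functionals $e_j^*\otimes e_k^*$, the value $|\langle u, e_j^*\otimes e_k^*\rangle|$ is at most $1$, attained). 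Since $2^{1/p}\notin\{1,2\}$ for $1<p<\infty$, the body $B_p^{d_1,d_2}$ coincides with neither $B_p^{d_1}\otimes_\pi B_p^{d_2}$ nor $B_p^{d_1}\otimes_\epsilon B_p^{d_2}$.

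The main obstacle I anticipate is the clean justification that $\pi(u)=2$ and $\epsilon(u)=1$ in the test-vector step: the upper bound $\pi(u)\leq 2$ and the lower bound $\epsilon(u)\geq 1$ are trivial, but the matching estimates $\pi(u)\geq 2$ and $\epsilon(u)\leq 1$ require a short argument — for $\epsilon$ one uses that every extreme point of $B_{p'}^{d_i}$ pairs with $u$ to give at most $1$ by Hölder, and for $\pi$ one can either quote the known computation of the projective norm of the identity-type element in $\ell_p^n\hat\otimes\ell_p^n$ or argue via the trace duality $\pi(u)=\sup\{|\langle u,v\rangle_H| : \epsilon(v)\leq 1\}$ together with the choice $v=e_1^{d_1}\otimes e_1^{d_2}+e_2^{d_1}\otimes e_2^{d_2}$ renormalized, noting $\epsilon(v)=1$ again. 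Everything else is bookkeeping with coordinates in the fixed orthonormal basis.
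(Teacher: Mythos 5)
The first two parts of your proposal (the endpoints $p=1,\infty$ via Proposition \ref{prop: l1 linf balls}, and the direct verification that $\|\cdot\|_p$ is a reasonable crossnorm on decomposable vectors and their duals) are sound and match the paper's approach, which verifies exactly the two displayed identities for $g_{B_p^{d_1,\ldots,d_l}}$ and its polar and then invokes Proposition \ref{prop:razonable cruzada version convexa}.

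The last part has a genuine gap. First, the claimed values $\pi(u)=2$ and $\epsilon(u)=1$ for $u=e_1^{d_1}\otimes e_1^{d_2}+e_2^{d_1}\otimes e_2^{d_2}$ are each valid only on half the range of $p$. A direct computation shows $\epsilon(u)=\max\{1,2^{2/p-1}\}$, which equals $1$ only for $p\geq 2$: for $1<p<2$, one takes $x^*=y^*=2^{-1/p'}(1,1,0,\ldots)$ (with $\|x^*\|_{p'}=1$, but $\|x^*\|_2>1$) to get $\epsilon(u)=2^{2/p-1}>1$. Your extreme-point/H\"older argument for $\epsilon(u)\leq 1$ therefore fails for $1<p<2$, because for $1<p'<\infty$ every boundary point of $B_{p'}^{d_i}$ is extreme, not just the coordinate vectors. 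Dually, $\pi(u)=\min\{2,2^{2/p}\}$, which equals $2$ only for $p\leq 2$; your self-paired trace-duality bound gives $\pi(u)\geq 2/\epsilon_{\ell_{p'}}(u)$, and the denominator exceeds $1$ precisely when $p>2$. Although in both regimes the true values of $\pi(u)$ and $\epsilon(u)$ do differ from $\|u\|_p=2^{1/p}$, the comparison you actually need is against the correct formulas, not against the fixed numbers $2$ and $1$. Second, the reduction ``it suffices to do $l=2$ by grouping factors'' is not justified: grouping $\mathbb{R}^{d_1}\otimes\cdots\otimes\mathbb{R}^{d_{l-1}}$ replaces $B_p^{d_1}\otimes_\pi\cdots\otimes_\pi B_p^{d_{l-1}}$ by a convex body that is \emph{not} $B_p^{d_1\cdots d_{l-1}}$, so the resulting two-factor statement is not an instance of the $l=2$ case you would have proved.

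By contrast, the paper works directly with general $l$: it quotes the Arias--Farmer computation (\cite[Theorem 1.3]{farmerarias}) that the two-dimensional span of $e_1^{d_1}\otimes\cdots\otimes e_1^{d_l}$ and $e_2^{d_1}\otimes\cdots\otimes e_2^{d_l}$ inside $\otimes_{\pi,i=1}^l\ell_p^{d_i}$ is isometrically $\ell_r^2$ with $1/r=\min\{1,l/p\}$, so that $\pi$ restricted to that plane is an $\ell_r$-norm with $r\neq p$ for $l\geq 2$, $1<p<\infty$. This settles the $\pi$ case at once, and the $\epsilon$ case follows by polarity (Proposition \ref{prop: duality}), with no case split in $p$ and no reduction to $l=2$. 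If you want to avoid citing Arias--Farmer, your trace-duality idea can be repaired into a clean lower bound: using $v=u$ one gets $\pi(u)\geq 2/\epsilon_{\ell_{p'}}(u)=\min\{2,2^{l/p}\}>2^{1/p}$ for all $1<p<\infty$, $l\geq 2$; but you must compute $\epsilon_{\ell_{p'}}(u)$ honestly (it is $\max\{1,2^{1-l/p}\}$), rather than asserting it equals $1$.
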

\begin{proof}

 We will use the notation fixed in Example \ref{sec: proj-inj}. The cases $p=1,\infty$ were already proved in Proposition \ref{prop: l1 linf balls}. We will give the proof for $1<p<\infty.$

 Let  $x^i\in\mathbb{R}^{d_i},$ $i=1,\ldots,l$ then:
\begin{align*}
g_{B_{p}^{d_{1},\ldots,d_{l}}}\left(x^{1}\otimes\cdots\otimes x^{l}\right) &=\left({\sum}_{j_{1},\ldots,j_{l}}\left|\left\langle x^{1}\otimes\cdots\otimes x^{l},e_{j_{1}}^{d_{1}}\otimes\cdots\otimes e_{j_{l}}^{d_{l}}\right\rangle _{H}\right|^{p}\right)^{\frac{1}{p}}\\
&=\left\Vert x^{1}\right\Vert _{p}\cdots\left\Vert x^{l}\right\Vert _{p}.
\end{align*}
Thus, from the last equality for $p^*$ and the relation $\left(B_{p}^{d_{1},\ldots,d_{l}}\right)^{\circ}=B_{p^*}^{d_{1},\ldots,d_{l}}$ we have that $g_{\left(B_{p}^{d_{1},\ldots,d_{l}}\right)^{\circ}}\left(x^{1}\otimes\cdots\otimes x^{l}\right) =\left\Vert x^{1}\right\Vert _{p^{*}}\cdots\left\Vert x^{l}\right\Vert _{p^{*}}.$
Therefore, by Proposition \ref{prop:razonable cruzada version convexa}, $B_{p}^{d_1,\ldots,d_{l}}$ is a tensorial body w.r.t. $B_p^{d_i},$ $i=1\ldots,l.$

To prove the other statement, first observe that if $d_i=1,$ $i=1,\ldots,l-1$ then $B_{p}^{1,\ldots,d_{l}}= B_{p}^{1}\otimes_{\epsilon}\cdots\otimes_{\epsilon}B_{p}^{d_{l}}=B_{p}^{1}\otimes_{\epsilon}\cdots\otimes_{\epsilon}B_{p}^{d_{l}}.$ To avoid this case, we assume that each $d_i\geq2.$

Let $E\subset\otimes_{\pi,i=1}^{l}\ell_{p}^{d_i}$ be the vector space generated by $\left\{e_{1}^{d_{1}}\otimes\cdots\otimes e_{1}^{d_{l}},e_{2}^{d_{1}}\otimes\cdots\otimes e_{2}^{d_{l}}\right\}.$ Then, from \cite[Theorem 1.3,]{farmerarias}, it follows that $E$ is isometrically isomorphic to $\ell_{r}^2$ for $\frac{1}{r}=min\{1,\frac{l}{p}\}.$ Hence, for each $u=a_{1}e_{1}^{d_{1}}\otimes\cdots\otimes e_{1}^{d_{l}}+a_{2}e_{2}^{d_{1}}\otimes\cdots\otimes e_{2}^{d_{l}}\in E$ we have $\pi\left(u\right)=\left(\left|a_{1}\right|^r+\left|a_{2}\right|^r\right)^{\frac{1}{r}}.$ Therefore, $B_{p}^{d_{1},\ldots,d_{l}}\neq B_{p}^{d_{1}}\otimes_{\pi}\cdots\otimes_{\pi}B_{p}^{d_{l}}.$

Now, suppose that $B_{p}^{d_{1},\ldots,d_{l}}=B_{p}^{d_{1}}\otimes_{\epsilon}\cdots\otimes_{\epsilon}B_{p}^{d_{l}}$ for some $1<p<\infty.$ Then, from Proposition \ref{prop: duality}, $B_{p^*}^{d_{1},\ldots,d_{l}}=B_{p^*}^{d_{1}}\otimes_{\pi}\cdots\otimes_{\pi}B_{p^*}^{d_{l}}.$  Since the latter equality is not possible, we must have $B_{p}^{d_{1},\ldots,d_{l}}\neq B_{p}^{d_{1}}\otimes_{\epsilon}\cdots\otimes_{\epsilon}B_{p}^{d_{l}}$ for all $1<p<\infty.$
\end{proof}
Proposition \ref{prop: lp balls are tensorial bodies} together with Corollary \ref{cor:equivalence of tensor body} imply that $B_{p}^{d},$ $1\leq p\leq \infty,$ is the closed unit ball associated to a reasonable crossnorm on $\otimes_{i=1}^{l}\ell_{p}^{d_i}$ for any factorization $d=d_1\cdots d_l.$

\subsubsection{A  convex body in $\mathbb{R}^{mn}=\mathbb{R}^{m}\otimes\mathbb{R}^{n}$ which is not a tensorial body in $\mathbb{R}^{m}\otimes\mathbb{R}^{n}$. } Let $m,n\in\mathbb{N}, m,n\geq 2$. Let
\[
\mathcal{E}=\left\{ z=\stackrel[i,j=1]{m,n}{\sum}z_{ij}e_{i}^{m}\otimes e_{j}^{n}:\frac{\left|z_{11}\right|^2}{3}+\frac{\left|z_{mn}\right|^2}{2}+
\sum_{\left(i,j\right)\neq\left(1,1\right),\left(m,n\right)}\left|z_{ij}\right|^2\leq1\right\}.
\]
Then $\mathcal{E}\in\mathcal{B}(\mathbb{R}^m\otimes\mathbb{R}^n)
\setminus\mathcal{B}_{\otimes}(\mathbb{R}^m\otimes\mathbb{R}^n).$ To verify this, consider the convex bodies generated by $e_{1}^{m}\otimes \sqrt{3}e_{1}^{n}\text{ and }e_{m}^{m}\otimes\sqrt{2}e_{n}^{n}, $ according to the relation (\ref{eq: Q1 Ql}):
\[
\begin{array}{cc}
\mathcal{E}_{1}^{e_{1}^m,\sqrt{3}e_{1}^{n}}=\left\{ x\in\mathbb{R}^{m}:x\otimes\sqrt{3}e_{1}^{n}\in\mathcal{E}\right\},  & \mathcal{E}_{1}^{ e_{m}^{m},\sqrt{2}e_{n}^{n}}=\left\{ x\in\mathbb{R}^{m}:x\otimes\sqrt{2}e_{n}^{n}\in\mathcal{E}\right\}.
\end{array}
\]
We will proceed by contradiction. Suppose that $\mathcal{E}$ is a tensorial body in $\mathbb{R}^m\otimes\mathbb{R}^n,$ then Corollary \ref{cor:equivalence of tensor body} and Proposition \ref{prop:unicidad de secciones} imply that there exists $\lambda_{1}>0$ such that $\mathcal{E}_{1}^{e_{1}^m,\sqrt{3}e_{1}^{n}}=\lambda_{1}\mathcal{E}_{1}^{ e_{m}^{m},\sqrt{2}e_{n}^{n}}.$  However, for every $x=\left(x_1,\ldots,x_m\right)\in\mathbb{R}^m$ we have:
\begin{alignat*}{1}
g_{\mathcal{E}_{1}^{e_{1}^m,\sqrt{3}e_{1}^{n}}}\left(x\right) =g_{\mathcal{E}}\left(x\otimes\sqrt{3}e_{1}^{n}\right)& =\sqrt{\left|x_{1}\right|^{2}+3{\sum}_{i=2}^{m}\left|x_{i}\right|^{2}},\\
g_{\mathcal{E}_{1}^{ e_{m}^{m},\sqrt{2}e_{n}^{n}}}\left(x\right) =g_{\mathcal{E}}\left(x\otimes\sqrt{2}e_{n}^{n}\right)& =\sqrt{\left|x_{m}\right|^{2}+2{\sum}_{i=1}^{m-1}\left|x_{i}\right|^{2}}.
\end{alignat*}
Thus  $\mathcal{E}_{1}^{e_{1}^m,\sqrt{3}e_{1}^{n}}\neq\lambda\mathcal{E}_{1}^{ e_{m}^{m},\sqrt{2}e_{n}^{n}}$ for all $\lambda>0.$ This is a contradiction, hence $Q$ is not a tensorial body in $\mathbb{R}^m\otimes\mathbb{R}^n.$

Analogous examples $\mathcal{E}$  so that  $\mathcal{E}\in\mathcal{B}(\otimes_{i=1}^{l}\mathbb{R}^{d_{i}})
\setminus\mathcal{B}_{\otimes}(\otimes_{i=1}^{l}\mathbb{R}^{d_{i}}),$  can be constructed in $\otimes_{i=1}^{l}\mathbb{R}^{d_{i}}$ when $l\geq2.$

\begin{rem}
The previous example, in contrast with Example 3.1.1 (the trivial case), shows that if $d=mn$ is not a trivial factorization of $d$ (i.e. if $m\neq 1$ or $n\neq 1$) then $\mathcal{B}_{\otimes}(\mathbb{R}^m\otimes\mathbb{R}^n)
\subsetneq\mathcal{B}(\mathbb{R}^m\otimes\mathbb{R}^n).$ As a consequence being a tensorial body depends on the tensor decomposition defined on $\mathbb{R}^d$.
\end{rem}

\subsection{Linear isomorphisms preserving tensorial bodies}

A linear map $T:\otimes_{i=1}^{l}\mathbb{R}^{d_{i}}$ $\rightarrow\otimes_{i=1}^{l}\mathbb{R}^{d_{i}}$ \textsl{preserves decomposable vectors} if  $T\left(x^{1}\otimes\cdots\otimes x^{l}\right)$ is a decomposable vector for every $x^{i}\in\mathbb{R}^{d_i},$ $i=1,\ldots,l.$ By $GL_{\otimes}\left(\otimes_{i=1}^{l}\mathbb{R}^{d_{i}}\right)$ we denote the set of linear isomorphisms $T:\otimes_{i=1}^{l}\mathbb{R}^{d_{i}}\rightarrow\otimes_{i=1}^{l}\mathbb{R}^{d_{i}}$ preserving  decomposable vectors. To shorten notation we usually write $GL_{\otimes}.$

Linear mappings preserving decomposable vectors have been deeply studied. For an  account on this topic as well as for the fundamentals about it, we refer the reader to \cite{limcampocualquiera,marcus1,west1,west2}.
In \cite[Corollary 2.14]{limcampocualquiera}, it is proved that if $d_i\geq2,$ $i=1,\ldots,l,$ then for each element  $T\in GL_{\otimes}\left(\otimes_{i=1}^{l}\mathbb{R}^{d_{i}}\right)$ there exists a permutation $\sigma$ on $\left\{ 1,...,l\right\} $
and linear isomorphisms $T_{i}:\mathbb{R}^{d_{\sigma\left(i\right)}}\rightarrow\mathbb{R}^{d_{i}},$
 $i=1,...,l$ such that  for every $x^{1}\otimes\cdots\otimes x^{l}\in\otimes_{i=1}^{l}\mathbb{R}^{d_{i}},$
\begin{equation}
\label{eq:tensor group}
T\left(x^{1}\otimes\cdots\otimes x^{l}\right)=T_{1}\left(x^{\sigma\left(1\right)}\right)\otimes\cdots\otimes T_{l}\left(x^{\sigma\left(l\right)}\right).
\end{equation}
 Using this characterization and the fact that the set of decomposable vectors is closed in $\otimes_{H,i=1}^{l}\mathbb{R}^{d_{i}}$ (see Proposition \ref{prop:DECOMPOSABLE TENSORS ARE CLOSED}), we can easily  obtain the next result.

\begin{prop}
\label{prop:GLtensor is closed}
Let $d_i\geq2,$ $i=1,\ldots,l$ be natural numbers. Then $GL_{\otimes}\left(\otimes_{i=1}^{l}\mathbb{R}^{d_{i}}\right)$ is a closed subgroup of $GL\left(\otimes_{H,i=1}^{l}\mathbb{R}^{d_{i}}\right).$
\end{prop}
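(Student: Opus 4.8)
The goal is to show $GL_{\otimes}\left(\otimes_{i=1}^{l}\mathbb{R}^{d_{i}}\right)$ is a closed subgroup of $GL\left(\otimes_{H,i=1}^{l}\mathbb{R}^{d_{i}}\right)$. The plan is to verify the subgroup property directly and then prove closedness using the characterization (\ref{eq:tensor group}) together with Proposition \ref{prop:DECOMPOSABLE TENSORS ARE CLOSED}. First I would check that $GL_{\otimes}$ is a subgroup: the identity preserves decomposable vectors; if $S,T\in GL_{\otimes}$, then $ST$ sends $x^{1}\otimes\cdots\otimes x^{l}$ to $S$ of a decomposable vector, which is decomposable, so $ST\in GL_{\otimes}$; and for the inverse, if $T\in GL_{\otimes}$ then by (\ref{eq:tensor group}) we can write $T\left(x^{1}\otimes\cdots\otimes x^{l}\right)=T_{1}\left(x^{\sigma\left(1\right)}\right)\otimes\cdots\otimes T_{l}\left(x^{\sigma\left(l\right)}\right)$ with each $T_{i}$ a linear isomorphism and $\sigma$ a permutation, whence $T^{-1}$ is given by the analogous formula built from $T_{i}^{-1}$ and $\sigma^{-1}$, so $T^{-1}\in GL_{\otimes}$ as well.

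For closedness, suppose $T_{n}\in GL_{\otimes}$ and $T_{n}\to T$ in $GL\left(\otimes_{H,i=1}^{l}\mathbb{R}^{d_{i}}\right)$. Fix an arbitrary decomposable vector $v=x^{1}\otimes\cdots\otimes x^{l}$. Then $T_{n}(v)$ is decomposable for every $n$, and $T_{n}(v)\to T(v)$ because $T_{n}\to T$ in operator norm. Since the set of decomposable vectors is closed in $\otimes_{H,i=1}^{l}\mathbb{R}^{d_{i}}$ by Proposition \ref{prop:DECOMPOSABLE TENSORS ARE CLOSED}, the limit $T(v)$ is decomposable. As $v$ was an arbitrary decomposable vector, $T$ preserves decomposable vectors. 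Finally, $T\in GL\left(\otimes_{H,i=1}^{l}\mathbb{R}^{d_{i}}\right)$ by hypothesis (the limit is taken inside the general linear group), so $T\in GL_{\otimes}$; hence $GL_{\otimes}$ is closed.

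The main point requiring care — though it is not really an obstacle — is the use of the structure theorem (\ref{eq:tensor group}) from \cite[Corollary 2.14]{limcampocualquiera} to handle the inverse: one must note that each $T_{i}$ appearing there is genuinely invertible (which follows since $T$ itself is, by a rank argument on decomposable vectors), so that $T^{-1}$ again has the decomposable-preserving form. Everything else is a routine limiting argument once Proposition \ref{prop:DECOMPOSABLE TENSORS ARE CLOSED} is invoked; in particular one does not even need the structure theorem for the closedness half, only for the subgroup half.
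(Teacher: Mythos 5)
Your proof is correct and uses exactly the two ingredients the paper itself indicates (the structure theorem~(\ref{eq:tensor group}) for the inverse in the subgroup half, and Proposition~\ref{prop:DECOMPOSABLE TENSORS ARE CLOSED} for closedness); the paper does not spell out details beyond naming these tools, and your argument is the natural fleshing out of that sketch.
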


\begin{thm}
\label{thm:GLsigma preserves tensorial bodies}
($GL_{\otimes}$ preserves tensorial bodies).
Assume $d_{i}\geq2$ for $i=1,...,l.$ If $T\in GL_{\otimes}\left(\otimes_{i=1}^{l}\mathbb{R}^{d_{i}}\right)$ and $Q\in\mathcal{B}_{\otimes}\left(\otimes_{i=1}^{l}\mathbb{R}^{d_{i}}\right),$ then $TQ\in\mathcal{B}_{\otimes}\left(\otimes_{i=1}^{l}\mathbb{R}^{d_{i}}\right).$
\end{thm}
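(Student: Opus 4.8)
The plan is to reduce the statement to the structure theorem (\ref{eq:tensor group}) for elements of $GL_{\otimes}$ and then track what each factor does to the defining inclusions of a tensorial body. First I would fix $Q\in\mathcal{B}_{Q_{1},\ldots,Q_{l}}\left(\otimes_{i=1}^{l}\mathbb{R}^{d_{i}}\right)$, so that
\[
Q_{1}\otimes_{\pi}\cdots\otimes_{\pi}Q_{l}\subseteq Q\subseteq Q_{1}\otimes_{\epsilon}\cdots\otimes_{\epsilon}Q_{l},
\]
and write $T\left(x^{1}\otimes\cdots\otimes x^{l}\right)=T_{1}\left(x^{\sigma(1)}\right)\otimes\cdots\otimes T_{l}\left(x^{\sigma(l)}\right)$ with each $T_{i}:\mathbb{R}^{d_{\sigma(i)}}\to\mathbb{R}^{d_{i}}$ a linear isomorphism and $\sigma$ a permutation. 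The claim I want is that $TQ\in\mathcal{B}_{R_{1},\ldots,R_{l}}$ where $R_{i}:=T_{i}\left(Q_{\sigma(i)}\right)$, each of which is again a $0$-symmetric convex body since $T_{i}$ is a linear isomorphism.

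The key step is the commutation identity: applying $T$ to a decomposable generator of $Q_{1}\otimes_{\pi}\cdots\otimes_{\pi}Q_{l}$ yields a decomposable generator of $R_{1}\otimes_{\pi}\cdots\otimes_{\pi}R_{l}$, and since $T$ is linear (hence affine and continuous) it maps the convex hull onto the convex hull. Concretely, using the definition of the projective tensor product of convex bodies,
\[
T\left(Q_{1}\otimes_{\pi}\cdots\otimes_{\pi}Q_{l}\right)=R_{1}\otimes_{\pi}\cdots\otimes_{\pi}R_{l};
\]
here one uses that $x^{i}\in Q_{i}$ for all $i$ if and only if, after relabelling by $\sigma$, the vector $T_{i}(x^{\sigma(i)})$ ranges over $R_{i}=T_{i}(Q_{\sigma(i)})$, together with the fact that $T$ is a bijection so it commutes with $\mathrm{conv}$ and with closure (the compactness in Proposition \ref{prop:imagen de compactos es compacto} ensures no closure issue anyway). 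For the injective side I would pass to polars: $Q^{\circ}\in\mathcal{B}_{Q_{1}^{\circ},\ldots,Q_{l}^{\circ}}$ by Proposition \ref{prop:tensorial bodies closed for polars scalar mult}, apply the projective computation to $(T^{*})^{-1}Q^{\circ}$ (where $T^{*}$ is the adjoint with respect to $\langle\cdot,\cdot\rangle_{H}$, which again lies in $GL_{\otimes}$ and has factors $(T_{i}^{*})^{-1}$ acting with the same permutation), and then use $(TQ)^{\circ}=(T^{*})^{-1}Q^{\circ}$ together with Proposition \ref{prop: duality} to convert the projective inclusion back into the injective inclusion $TQ\subseteq R_{1}\otimes_{\epsilon}\cdots\otimes_{\epsilon}R_{l}$.

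Combining, applying $T$ to the two-sided inclusion for $Q$ gives
\[
R_{1}\otimes_{\pi}\cdots\otimes_{\pi}R_{l}\subseteq TQ\subseteq R_{1}\otimes_{\epsilon}\cdots\otimes_{\epsilon}R_{l},
\]
so $TQ$ is a tensorial body with respect to $R_{1},\ldots,R_{l}$. I expect the main obstacle to be the bookkeeping around the permutation $\sigma$ and the adjoint: one must check that the adjoint of a decomposable-vector-preserving isomorphism is again one (so that the polar argument stays inside $GL_{\otimes}$), and that the Hilbert structure $\langle\cdot,\cdot\rangle_{H}$ is compatible with the tensor decomposition in the sense that $T^{*}$ factors as $(T_{i}^{*})^{-1}$ along the same coordinates; this uses the multiplicativity $\langle x^{1}\otimes\cdots\otimes x^{l},y^{1}\otimes\cdots\otimes y^{l}\rangle_{H}=\prod_{i}\langle x^{i},y^{i}\rangle$. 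Once that compatibility is in hand the rest is a routine chase through the definitions. An alternative, perhaps cleaner route avoiding adjoints is to use Corollary \ref{cor:equivalence of tensor body}: verify directly that $g_{TQ}$ is multiplicative on decomposable vectors with respect to the norms $g_{R_{i}}$, and likewise $g_{(TQ)^{\circ}}$ with respect to $g_{R_{i}^{\circ}}$, then invoke Proposition \ref{prop:razonable cruzada version convexa}; this reduces everything to the single scalar identity $g_{TQ}(x^{1}\otimes\cdots\otimes x^{l})=g_{Q}(T_{1}^{-1}\otimes\cdots)=\prod_{i}g_{Q_{\sigma(i)}}(\cdots)=\prod_{i}g_{R_{i}}(x^{i})$.
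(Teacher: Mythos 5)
Your proposal is correct and follows essentially the same route as the paper: invoke the structure theorem for $GL_{\otimes}$ (equation (\ref{eq:tensor group})), show $T$ sends the projective tensor product of the $Q_i$ to that of the $R_i=T_i(Q_{\sigma(i)})$, and handle the injective side by passing through polars and transposes via Proposition \ref{prop: duality}. The only nit is a slip of wording — it is $(T^{*})^{-1}$ (not $T^{*}$) that factors through $(T_{i}^{*})^{-1}$ with the same permutation $\sigma$ — but your argument clearly intends and uses the correct statement.
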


\begin{proof}
Suppose that $Q$ is a tensorial body in $\otimes_{i=1}^{l}\mathbb{R}^{d_{i}},$ then (\ref{eq: inclusion convex bodies razonables}) holds for suitable $0$-symmetric convex bodies $Q_{i}\subset\mathbb{R}^{d_{i}},$ $i=1,\ldots,l.$

On the other hand, let $T$ be an element in $GL_{\otimes}\left(\otimes_{i=1}^{l}\mathbb{R}^{d_{i}}\right)$ and let $T_i,$ $i=1,...,l,$ be as in (\ref{eq:tensor group}).
Then, by the definition of $\otimes_{\pi},$ we have:
\begin{align}
\label{eq:Imgen de un projectivo por  una GLSIGMA}
T\left(Q_{1}\otimes_{\pi}\cdots\otimes_{\pi}Q_{l}\right)
 & =T_{1}Q_{\sigma\left(1\right)}\otimes_{\pi}\cdots\otimes_{\pi}T_{l}Q_{\sigma\left(l\right)}.
\end{align}
Similarly,
\begin{align*}
T\left(Q_{1}\otimes_{\epsilon}\cdots\otimes_{\epsilon}Q_{l}\right) & =\left((T^{t})^{-1}\left(Q^{\circ}_{1}\otimes_{\pi}\cdots\otimes_{\pi}Q^{\circ}_{l}\right)\right)^{\circ}\\
 & =\left((T^{t}_{1})^{-1}Q^{\circ}_{\sigma\left(1\right)}\otimes_{\pi}\cdots\otimes_{\pi}(T^{t}_{l})^{-1}Q^{\circ}_{\sigma\left(l\right)}\right)^{\circ}\\
 & =T_{1}Q_{\sigma\left(1\right)}\otimes_{\epsilon}\cdots\otimes_{\epsilon}T_{l}Q_{\sigma\left(l\right)}.
\end{align*}
Therefore, $T_{1}Q_{\sigma\left(1\right)}\otimes_{\pi}\cdots\otimes_{\pi}T_{l}Q_{\sigma\left(l\right)}\subseteq TQ\subseteq T_{1}Q_{\sigma\left(1\right)}\otimes_{\epsilon}\cdots\otimes_{\epsilon}T_{l}Q_{\sigma\left(l\right)}.$ This proves that $TQ$ is a tensorial body in $\otimes_{i=1}^{l}\mathbb{R}^{d_{i}}.$
\end{proof}

\subsection*{A Banach-Mazur type distance} From now on, we will assume that each space $\mathbb{R}^{d_i},$ $i=1,\ldots,l$ has dimension $d_i\geq2$. Using Theorem \ref{thm:GLsigma preserves tensorial bodies} we are  able to define  a  distance $\delta_{\otimes}^{BM}$ between tensorial bodies in $\otimes_{i=1}^{l}\mathbb{R}^{d_{i}},$  which is the analogue, for tensorial bodies, of the Banach-Mazur distance.

Recall that the Banach-Mazur distance between isomorphic Banach spaces $X$ and $Y$ is defined as:
\[
\delta^{BM}\left(X,Y\right):=\inf\left\{ \left\Vert T\right\Vert \left\Vert T^{-1}\right\Vert :T\in\mathcal{L}\left(X,Y\right)\text{ and }T^{-1}\in\mathcal{L}\left(Y,X\right)\right\}.
\]
Between $0$-symmetric convex bodies in a Euclidean space $\mathbb{E}$, it is defined as:
\[
\delta^{BM}\left(P,Q\right):=\inf\left\{ \lambda\geq1:T:\mathbb{E}\rightarrow\mathbb{E}\text{ is a bijective linear map and }Q\subseteq TP\subseteq\lambda Q\right\}.
\]
A complete exposition of the Banah-Mazur distance and its properties can be found in \cite{Tomczak-Jaegermann1989}.

Let $P,Q$ be tensorial bodies in $\otimes_{i=1}^{l}\mathbb{R}^{d_{i}}.$
We define the \textbf{tensorial Banach-Mazur distance} $\delta_{\otimes}^{BM}\left(P,Q\right)$ as follows:
\begin{equation}
\label{eq:tensorial bm distance}
\delta_{\otimes}^{BM}\left(P,Q\right):=\inf\left\{ \lambda\geq1:Q\subseteq TP\subseteq\lambda Q,\text{ for }T\in GL_{\otimes}\left(\otimes_{i=1}^{l}\mathbb{R}^{d_{i}}\right)\right\}.
\end{equation}

It  is  well defined, since for every $P,Q\in\mathcal{B}_{\otimes}\left(\otimes_{i=1}^{l}\mathbb{R}^{d_{i}}\right)$ there exist real numbers $r_{1},r_{2}>0$ such that $Q\subseteq r_{1}P\subseteq r_{2}Q$.  It holds that
\begin{equation}
\label{eq:distancia banac mazur sigma es mas gander que la bnach mazur-1}
\delta^{BM}\left(P,Q\right)\leq\delta_{\otimes}^{BM}\left(P,Q\right)
\textrm{ for }P,Q\in\mathcal{B}_{\otimes}\left(\otimes_{i=1}^{l}\mathbb{R}^{d_{i}}\right).
\end{equation}
 Using Proposition \ref{prop:GLtensor is closed} and Theorem \ref{thm:GLsigma preserves tensorial bodies}, it can be directly proved that for each pair $P,Q\subset\otimes_{i=1}^{l}\mathbb{R}^{d_{i}}$ of tensorial bodies, the infimum in (\ref{eq:tensorial bm distance}) attains its value at some $\lambda>0$ and some $T\in GL_{\otimes}\left(\otimes_{i=1}^{l}\mathbb{R}^{d_{i}}\right).$ Indeed, it is possible to define the following equivalence relation:
\begin{center}
 \textit{For every $P,Q\in\mathcal{B}_{\otimes}\left(\otimes_{i=1}^{l}\mathbb{R}^{d_{i}}\right),$ $P\sim Q$ if and only if $\delta_{\otimes}^{BM}\left(P,Q\right)=1.$}\\
\par\end{center}
\noindent We denote  $\mathcal{BM}_{\otimes}\left(\otimes_{i=1}^{l}\mathbb{R}^{d_{i}}\right)$  the set of equivalence classes of tensorial bodies determined by this relation. Elementary arguments show that $\log\delta_{\otimes}^{BM}$ is a metric on this set. Moreover,   this metric gives rise to a Banach-Mazur type compactum of tensorial bodies:

\begin{thm}
\label{thm:BMSIGMA ES COMPACTO} (The compactum of tensorial bodies) $\left(\mathcal{BM}_{\otimes}\left(\otimes_{i=1}^{l}\mathbb{R}^{d_{i}}\right),\log\delta_{\otimes}^{BM}\right)$ is a compact metric space.
\end{thm}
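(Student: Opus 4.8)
The plan is to realize $\bigl(\mathcal{BM}_{\otimes}(\otimes_{i=1}^{l}\mathbb{R}^{d_{i}}),\log\delta_{\otimes}^{BM}\bigr)$ as a continuous image of a compact space, or alternatively to verify directly that it is sequentially compact. First I would establish that $\mathcal{BM}_{\otimes}$ is totally bounded. The quickest route is the comparison \eqref{eq:distancia banac mazur sigma es mas gander que la bnach mazur-1}: actually that inequality goes the wrong way for totally boundedness (a small $\delta^{BM}$-ball need not be contained in a small $\delta_{\otimes}^{BM}$-ball), so instead I would bound $\delta_{\otimes}^{BM}$ above using the factorization structure. By Theorem \ref{thm:GLsigma preserves tensorial bodies} every tensorial body $Q$ is tensorial with respect to essentially unique bodies $Q^{1},\ldots,Q^{l}$ (Propositions \ref{prop:unicidad de secciones}), and the inclusions $Q^{1}\otimes_{\pi}\cdots\otimes_{\pi}Q^{l}\subseteq Q\subseteq Q^{1}\otimes_{\epsilon}\cdots\otimes_{\epsilon}Q^{l}$ together with John's theorem applied to each $Q^{i}$ in $\mathbb{R}^{d_{i}}$ let me replace each $Q^{i}$ by a Euclidean ball up to factor $\sqrt{d_{i}}$ via a map in $GL(\mathbb{R}^{d_{i}})$, hence by a map in $GL_{\otimes}$; this shows $\delta_{\otimes}^{BM}(Q, B_{2}^{d_{1},\ldots,d_{l}})$ is bounded by a constant depending only on the $d_{i}$ and on the gap between $\otimes_{\pi}$ and $\otimes_{\epsilon}$ on Euclidean balls. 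In particular $\mathcal{BM}_{\otimes}$ has finite diameter; this is presumably Corollary \ref{cor:diametro sigmaCompactum}, which I may cite.

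Next I would prove completeness together with total boundedness by a direct sequential-compactness argument, which is cleaner. Given a sequence $[Q_{n}]$ of classes, normalize representatives so that $B_{2}^{d_{1},\ldots,d_{l}}\subseteq Q_{n}\subseteq R\, B_{2}^{d_{1},\ldots,d_{l}}$ for a fixed $R$ (possible by the diameter bound and by acting with $GL_{\otimes}$). Then the $Q_{n}$ lie in a Blaschke-compact family of convex bodies, so after passing to a subsequence $Q_{n}\to Q$ in the Hausdorff metric, equivalently $g_{Q_{n}}\to g_{Q}$ uniformly on compacts, with $Q$ a $0$-symmetric convex body. By Proposition \ref{prop:set tensorial bodies closed}, $Q$ is again a tensorial body, so $[Q]\in\mathcal{BM}_{\otimes}$. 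It remains to check $\log\delta_{\otimes}^{BM}([Q_{n}],[Q])\to 0$. Here I would use that the optimal transformations are attained in $GL_{\otimes}$ (stated in the text, via Proposition \ref{prop:GLtensor is closed} and Theorem \ref{thm:GLsigma preserves tensorial bodies}): since all $Q_{n},Q$ sit between $B_{2}^{d_{1},\ldots,d_{l}}$ and $R\,B_{2}^{d_{1},\ldots,d_{l}}$, the identity map already gives $Q\subseteq \lambda_{n} Q_{n}\subseteq \mu_{n}\lambda_{n} Q$ with $\lambda_{n},\mu_{n}\to 1$ as $g_{Q_{n}}\to g_{Q}$ uniformly on the unit sphere; hence $\delta_{\otimes}^{BM}([Q_{n}],[Q])\to 1$ and $\log\delta_{\otimes}^{BM}\to 0$. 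This gives sequential compactness, and since $\log\delta_{\otimes}^{BM}$ is a metric (noted in the text), sequential compactness is equivalent to compactness.

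The main obstacle I anticipate is the normalization step: to reduce to a Blaschke-compact family I need, for every tensorial body $Q$ (in a given class), a transformation $T\in GL_{\otimes}$ with $B_{2}^{d_{1},\ldots,d_{l}}\subseteq TQ\subseteq R\,B_{2}^{d_{1},\ldots,d_{l}}$ and a uniform $R$. The subtlety is that the John ellipsoid of $Q$ itself need not be adapted to the tensor structure, so I cannot simply apply John to $Q$; instead I must apply John to each factor body $Q^{i}$ and then tensor, using $\eqref{eq:ecuacionnnnn}$ and the known two-sided estimate between $\otimes_{\pi}$ and $\otimes_{\epsilon}$ of Euclidean balls (which contributes the $d_{i}$-dependent but uniform loss). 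Care is needed because the $Q^{i}$ are only determined up to scalars $\lambda_{i}$ with $\prod\lambda_{i}=1$ (Proposition \ref{prop:unicidad de secciones}), but since $GL_{\otimes}$ contains all such rescalings this ambiguity is harmless. Once that uniform normalization is in hand, the rest is the standard Blaschke-selection plus the continuity of $g_{Q}\mapsto Q$ argument, combined with the closedness of $\mathcal{B}_{\otimes}$ under the relevant limits from Proposition \ref{prop:set tensorial bodies closed}.
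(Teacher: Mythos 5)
Your proposal is correct and takes essentially the same approach as the paper: normalize each factor body $P_n^i$ to a fixed reference ball (you use John ellipsoids and $B_2^{d_i}$, the paper uses $B_1^{d_i}\subseteq T_{i,n}P_n^i\subseteq d_iB_1^{d_i}$), tensor the normalizations to get a uniform enclosure of $(T_{1,n}\otimes\cdots\otimes T_{l,n})P_n$ via the $\pi$--$\epsilon$ comparison, extract a uniformly convergent subsequence of gauges (Blaschke/Arzel\`a--Ascoli), invoke Proposition \ref{prop:set tensorial bodies closed} to see the limit is tensorial, and conclude via the identity map estimate. The choice of reference body is the only cosmetic difference.
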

\begin{proof}
The proof is essentially  a standard  argument of compactness. Given a tensorial body $P\subset\otimes_{i=1}^{l}\mathbb{R}^{d_{i}},$ 
 $[P]$ denotes its associate equivalence class.

Let  $\left\{ \left[P_{n}\right]\right\} $ be a sequence 
in $\mathcal{BM}_{\otimes}\left(\otimes_{i=1}^{l}\mathbb{R}^{d_{i}}\right)$ and let  $P^i_n$ be the convex sets introduced in Proposition \ref{prop:set tensorial bodies closed}.
By  Corollary \ref{cor:equivalence of tensor body},     $P_{n}^{1}\otimes_{\pi}\cdots\otimes_{\pi}P_{n}^{l}\subseteq P_n\subseteq P_{n}^{1}\otimes_{\epsilon}\cdots\otimes_{\epsilon}P_{n}^{l}$. Thus, from \cite[Proposition 2.4]{maite},
it follows that
\begin{equation}
\label{eq: consequence P24 Maite}
P_{n}^{1}\otimes_{\pi}\cdots\otimes_{\pi}P_{n}^{l}\subseteq P_n\subseteq\frac{d}{d_{l}}P_{n}^{1}\otimes_{\pi}\cdots\otimes_{\pi}P_{n}^{l},
\end{equation}
for every $n\in\mathbb{N}$. On the other hand, from  a general well known fact, for every $P_{n}^{i},$ $i=1,...,l$ there exists a linear isomorphism $T_{i,n}:\mathbb{R}^{d_{i}}\rightarrow\mathbb{R}^{d_{i}},$ such that $B_{1}^{d_{i}}\subseteq T_{i,n}P_{n}^{i}\subseteq d_{i}B_{1}^{d_{i}}.$ Hence, applying $T_{1,n}\otimes\cdots\otimes T_{l,n}$ to (\ref{eq: consequence P24 Maite}) together with (\ref{eq:Imgen de un projectivo por  una GLSIGMA}), we have
\[
B_{1}^{d_{1}}\otimes_{\pi}\cdots\otimes_{\pi}B_{1}^{d_{l}}\subseteq\left(T_{1,n}\otimes\cdots\otimes T_{l,n}\right)P_n\subseteq\frac{d^{2}}{d_{l}}B_{1}^{d_{1}}\otimes_{\pi}\cdots\otimes_{\pi}B_{1}^{d_{l}}.
\]

Now, for each $n\in\mathbb{N}$ denote by  $Q_{n}$ the tensorial body $\left(T_{1,n}\otimes\cdots\otimes T_{l,n}\right)P_n.$
By the Arzela-Ascoli theorem, there is a subsequence $\left\{ g_{Q_{n_{k}}}\right\} $ converging uniformly (on compact sets of $\otimes_{i=1}^{l}\mathbb{R}^{d_{i}}$) to $g_Q$ for some $0$-symmetric convex body $Q$.
Hence, by Proposition \ref{prop:set tensorial bodies closed}, $Q$ is a tensorial body in  $\otimes_{i=1}^{l}\mathbb{R}^{d_{i}}.$

What is left is to show that $\left[P_{n_{k}}\right]$ converges to $[Q].$ To prove this, notice that the uniform convergence of $g_{Q_{n_{k}}}$ to $g_{Q}$ implies that the indentity map $I_{k}:\left(\otimes_{i=1}^{l}\mathbb{R}^{d_{i}},g_{Q_{n_{k}}}\right)\rightarrow\left(\otimes_{i=1}^{l}\mathbb{R}^{d_{i}},g_{Q}\right)$
 is such that $\text{lim}_{k\rightarrow\infty}\left\Vert I_{k}\right\Vert \left\Vert I_{k}^{-1}\right\Vert=1.$
Thus, $\text{lim}_{k\rightarrow\infty}\delta_{\otimes}^{BM}\left(Q_{n_{k}},Q\right)=1.$ Since $\delta_{\otimes}^{BM}\left(P_{n_{k}},Q\right)=\delta_{\otimes}^{BM}\left(Q_{n_{k}},Q\right),$ we conclude that $\left[P_{n_{k}}\right]$ converges to $[Q]$ as required.
\end{proof}

We finish this section by giving some upper bounds for the tensorial Banach-Mazur distance $\delta_{\otimes}^{BM}$.

\begin{prop}
Let $P_{i},Q_{i}\subset\mathbb{R}^{d_i},$ $i=1,\ldots,l$ be $0$-symmetric convex bodies. Then,
\begin{enumerate}
\item $\delta^{BM}\left(P_{1}\otimes_{\pi}\cdots\otimes_{\pi}P_{l},Q_{1}\otimes_{\pi}\cdots\otimes_{\pi}Q_{l}\right)\leq\delta^{BM}\left(P_{1},Q_{1}\right)\cdots\delta^{BM}\left(P_{l},Q_{l}\right).$
\item $\delta^{BM}\left(P_{1}\otimes_{\epsilon}\cdots\otimes_{\epsilon}P_{l},Q_{1}\otimes_{\epsilon}\cdots\otimes_{\epsilon}Q_{l}\right)\leq\delta^{BM}\left(P_{1},Q_{1}\right)\cdots\delta^{BM}\left(P_{l},Q_{l}\right).$
\end{enumerate}
\end{prop}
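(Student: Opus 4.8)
The plan is to prove (1) directly, by exhibiting for each prescribed $\varepsilon>0$ a single bijective linear map $T$ on $\otimes_{i=1}^{l}\mathbb{R}^{d_{i}}$ witnessing a Banach--Mazur distance below $(1+\varepsilon)^{l}\prod_i\delta^{BM}(P_i,Q_i)$, and then to deduce (2) from (1) by polar duality. There is no essential obstacle; the proof is an assembly of three formal properties of $\otimes_{\pi}$ with the polarity identity, all of which have essentially already been used earlier in the paper.

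For (1): fix $\varepsilon>0$. By the definition of $\delta^{BM}$ between $0$-symmetric convex bodies, for each $i$ there are a bijective linear map $T_{i}\colon\mathbb{R}^{d_{i}}\to\mathbb{R}^{d_{i}}$ and a scalar $\lambda_{i}\geq1$ with
\begin{equation*}
Q_{i}\subseteq T_{i}P_{i}\subseteq\lambda_{i}Q_{i},\qquad \lambda_{i}<(1+\varepsilon)\,\delta^{BM}(P_{i},Q_{i}).
\end{equation*}
Set $T:=T_{1}\otimes\cdots\otimes T_{l}$, a bijective linear map on $\otimes_{i=1}^{l}\mathbb{R}^{d_{i}}$ with inverse $T_{1}^{-1}\otimes\cdots\otimes T_{l}^{-1}$ (indeed $T\in GL_{\otimes}$). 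I would then invoke three elementary properties of $\otimes_{\pi}$, all immediate from its description as $\text{conv}\{x^{1}\otimes\cdots\otimes x^{l}:x^{i}\in Q_{i}\}$: monotonicity under inclusion of the factors; the identity $T\bigl(P_{1}\otimes_{\pi}\cdots\otimes_{\pi}P_{l}\bigr)=T_{1}P_{1}\otimes_{\pi}\cdots\otimes_{\pi}T_{l}P_{l}$ (this is exactly the computation already carried out in the proof of Theorem \ref{thm:GLsigma preserves tensorial bodies}, combining $T(\text{conv}\,S)=\text{conv}(TS)$ with $T(x^{1}\otimes\cdots\otimes x^{l})=T_{1}x^{1}\otimes\cdots\otimes T_{l}x^{l}$); and positive homogeneity in each slot, $(\lambda_{1}Q_{1})\otimes_{\pi}\cdots\otimes_{\pi}(\lambda_{l}Q_{l})=(\lambda_{1}\cdots\lambda_{l})\,Q_{1}\otimes_{\pi}\cdots\otimes_{\pi}Q_{l}$ (as in the proof of Proposition \ref{prop:tensorial bodies closed for polars scalar mult}). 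Applying these to the chain $Q_{i}\subseteq T_{i}P_{i}\subseteq\lambda_{i}Q_{i}$ yields
\begin{equation*}
Q_{1}\otimes_{\pi}\cdots\otimes_{\pi}Q_{l}\ \subseteq\ T\bigl(P_{1}\otimes_{\pi}\cdots\otimes_{\pi}P_{l}\bigr)\ \subseteq\ \Bigl(\prod_{i=1}^{l}\lambda_{i}\Bigr)Q_{1}\otimes_{\pi}\cdots\otimes_{\pi}Q_{l},
\end{equation*}
so $\delta^{BM}\bigl(P_{1}\otimes_{\pi}\cdots\otimes_{\pi}P_{l},\,Q_{1}\otimes_{\pi}\cdots\otimes_{\pi}Q_{l}\bigr)\leq\prod_{i}\lambda_{i}<(1+\varepsilon)^{l}\prod_{i}\delta^{BM}(P_{i},Q_{i})$; letting $\varepsilon\to0$ proves (1).

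For (2): I would first record that polarity is a $\delta^{BM}$-isometry. Taking polars turns $Q\subseteq TP\subseteq\lambda Q$ into $\lambda^{-1}Q^{\circ}\subseteq(T^{t})^{-1}P^{\circ}\subseteq Q^{\circ}$, i.e.\ $Q^{\circ}\subseteq\lambda(T^{t})^{-1}P^{\circ}\subseteq\lambda Q^{\circ}$, whence $\delta^{BM}(P^{\circ},Q^{\circ})\leq\delta^{BM}(P,Q)$; applying this to $P^{\circ},Q^{\circ}$ and using the bipolar theorem gives equality. Combining this with Proposition \ref{prop: duality}(1), which gives $(P_{1}\otimes_{\epsilon}\cdots\otimes_{\epsilon}P_{l})^{\circ}=P_{1}^{\circ}\otimes_{\pi}\cdots\otimes_{\pi}P_{l}^{\circ}$ and likewise for the $Q_{i}$, we obtain
\begin{equation*}
\delta^{BM}\bigl(P_{1}\otimes_{\epsilon}\cdots\otimes_{\epsilon}P_{l},\,Q_{1}\otimes_{\epsilon}\cdots\otimes_{\epsilon}Q_{l}\bigr)=\delta^{BM}\bigl(P_{1}^{\circ}\otimes_{\pi}\cdots\otimes_{\pi}P_{l}^{\circ},\,Q_{1}^{\circ}\otimes_{\pi}\cdots\otimes_{\pi}Q_{l}^{\circ}\bigr),
\end{equation*}
and part (1) together with $\delta^{BM}(P_{i}^{\circ},Q_{i}^{\circ})=\delta^{BM}(P_{i},Q_{i})$ gives the claimed bound.

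The only points that require any care are the three formal identities for $\otimes_{\pi}$ (monotonicity, the action of $T_{1}\otimes\cdots\otimes T_{l}$, and homogeneity) and the polarity formula $(TP)^{\circ}=(T^{t})^{-1}P^{\circ}$; none of these is a genuine obstacle, and all but the last have already appeared verbatim in earlier proofs. Thus the argument is essentially: build $T$ factorwise, push the two inclusions through $\otimes_{\pi}$, and transfer the injective case to the projective one by duality.
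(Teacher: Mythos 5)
Your argument is correct, and it differs from the paper's in its organization. For part (1) the paper first proves a single-slot inequality,
\begin{equation*}
\delta^{BM}\bigl(P_{1}\otimes_{\pi}\cdots\otimes_{\pi}P_{i}\otimes_{\pi}\cdots\otimes_{\pi}P_{l},\,P_{1}\otimes_{\pi}\cdots\otimes_{\pi}Q_{i}\otimes_{\pi}\cdots\otimes_{\pi}P_{l}\bigr)\leq\delta^{BM}(P_{i},Q_{i}),
\end{equation*}
using a map of the form $I\otimes\cdots\otimes T_{i}\otimes\cdots\otimes I$, and then chains $l$ such one-factor replacements via the multiplicative triangle inequality for $\delta^{BM}$. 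You instead assemble the full witness $T=T_{1}\otimes\cdots\otimes T_{l}$ at once and push both inclusions through $\otimes_{\pi}$ in a single step; this relies on exactly the same three formal identities (monotonicity in each factor, the action of $T_{1}\otimes\cdots\otimes T_{l}$ as in the proof of Theorem \ref{thm:GLsigma preserves tensorial bodies}, and positive homogeneity as in Proposition \ref{prop:tensorial bodies closed for polars scalar mult}) but avoids the intermediate telescoping, so it is somewhat more direct. For part (2) the paper merely remarks that the $\otimes_{\epsilon}$ case is ``analogous,'' whereas you give an explicit reduction to part (1) by noting that polarity is a $\delta^{BM}$-isometry (via $(TP)^{\circ}=(T^{t})^{-1}P^{\circ}$ and the bipolar theorem) and then applying Proposition \ref{prop: duality}(1); this is a clean way to make the ``analogous'' precise, and in particular, it exhibits $\otimes_{\epsilon}$ as dual to $\otimes_{\pi}$ rather than re-running a parallel calculation. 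Both proofs are sound; yours trades the paper's modular single-slot lemma for a one-shot construction plus a duality transfer.
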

\begin{proof}
We give the proof only for the projective tensor product of $0$-symmetric convex bodies. The proof for $\otimes_{\epsilon}$ is analogous. First, we will show that for each $i\in\left\{ 1,...,l\right\}$ the following inequality holds:
\begin{equation}
\delta^{BM}\left(P_{1}\otimes_{\pi}\cdots\otimes_{\pi}P_{i}\otimes_{\pi}\cdots\otimes_{\pi}P_{l},P_{1}\otimes_{\pi}\cdots\otimes_{\pi}Q_{i}\otimes_{\pi}\cdots\otimes_{\pi}P_{l}\right)\leq\delta^{BM}\left(P_{i},Q_{i}\right).\label{eq:cota banac mazur proposicion}
\end{equation}
Let  $\lambda\geq\delta^{BM}\left(P_{i},Q_{i}\right).$ Then, $Q_{i}\subseteq T_{i}\left(P_{i}\right)\subseteq\lambda Q_{i}$ for some linear isomorphism $T_{i}:\mathbb{R}^{d_i}\rightarrow\mathbb{R}^{d_i}.$  By 
(\ref{eq:Imgen de un projectivo por  una GLSIGMA}) we have $P_{1}\otimes_{\pi}\cdots\otimes_{\pi}T_{i}P_{i}\otimes_{\pi}\cdots\otimes_{\pi}P_{l}=I_{\mathbb{R}^{d_1}}\otimes\cdots\otimes T_{i}\otimes\cdots\otimes I_{\mathbb{R}^{d_l}}\left(P_{1}\otimes_{\pi}\cdots\otimes_{\pi}P_{i}\otimes_{\pi}\cdots\otimes_{\pi}P_{l}\right).$ Therefore, if $S=I_{\mathbb{R}^{d_1}}\otimes\cdots\otimes T_{i}\otimes\cdots\otimes I_{\mathbb{R}^{d_l}}$ then
\begin{align*}
P_{1}\otimes_{\pi}\cdots\otimes_{\pi}Q_{i}\otimes_{\pi}\cdots\otimes_{\pi}P_{l} & \subseteq S\left(P_{1}\otimes_{\pi}\cdots\otimes_{\pi}P_{i}\otimes_{\pi}\cdots\otimes_{\pi}P_{l}\right)\\
 & \subseteq P_{1}\otimes_{\pi}\cdots\otimes_{\pi}\lambda Q_{i}\otimes_{\pi}\cdots\otimes_{\pi}P_{l}\\
 & =\lambda\left(P_{1}\otimes_{\pi}\cdots\otimes_{\pi}Q_{i}\otimes_{\pi}\cdots\otimes_{\pi}P_{l}\right).
\end{align*}
From this, it follows (\ref{eq:cota banac mazur proposicion}).

To prove (1), observe that from the multiplicative triangle inequality of $\delta^{BM}$ and  (\ref{eq:cota banac mazur proposicion}) we have:
\begin{gather*}
\delta^{BM}\left(P_{1}\otimes_{\pi}\cdots\otimes_{\pi}P_{l},Q_{1}\otimes_{\pi}\cdots\otimes_{\pi}Q_{l}\right)\leq\\
{\prod}_{i=1}^{l}\delta^{BM}\left(Q_{1}\otimes_{\pi}\cdots\otimes_{\pi}Q_{i-1}\otimes_{\pi}P_{i}\otimes_{\pi}P_{i+1}\otimes_{\pi}\cdots\otimes_{\pi}P_{l},\right.\\
\left. Q_{1}\otimes_{\pi}\cdots\otimes_{\pi}Q_{i-1}\otimes_{\pi}Q_{i}\otimes_{\pi}P_{i+1}\otimes_{\pi}\cdots\otimes_{\pi}P_{l}\right)\leq\\
\delta^{BM}\left(P_{1},Q_{1}\right)\cdots\delta^{BM}\left(P_{l},Q_{l}\right).
\end{gather*}
\end{proof}

Using the previous proposition and \cite[Proposition 2.4]{maite}, we obtain the following upper bound for the tensorial Banach-Mazur distance.
\begin{cor}
\label{cor:diametro sigmaCompactum}
For every pair of tensorial bodies $P,Q\subset\otimes_{i=1}^{l}\mathbb{R}^{d_{i}}$ we have:
\begin{enumerate}
\item $\delta_{\otimes}^{BM}\left(P,Q\right)\leq\left(d_{1}\cdots d_{l-1}\right)^{2}\left({\prod}_{i=1}^{l}\delta^{BM}\left(P^{i},Q^{i}\right)\right).$
\item $\delta_{\otimes}^{BM}\left(P,Q\right)\leq\left(d_{1}\cdots d_{l-1}\right)^{2}\left(d_{1}\cdots d_{l}\right).$
\end{enumerate}
\end{cor}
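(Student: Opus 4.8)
The plan is to deduce both bounds from the preceding Proposition together with \cite[Proposition 2.4]{maite}, which (as used already in the proof of Theorem \ref{thm:BMSIGMA ES COMPACTO}) gives the sandwich $R_{1}\otimes_{\pi}\cdots\otimes_{\pi}R_{l}\subseteq R_{1}\otimes_{\epsilon}\cdots\otimes_{\epsilon}R_{l}\subseteq\frac{d}{d_{l}}R_{1}\otimes_{\pi}\cdots\otimes_{\pi}R_{l}$ for any $0$-symmetric convex bodies $R_{i}\subset\mathbb{R}^{d_{i}}$, so that $\delta^{BM}\left(R_{1}\otimes_{\pi}\cdots\otimes_{\pi}R_{l},R_{1}\otimes_{\epsilon}\cdots\otimes_{\epsilon}R_{l}\right)\leq\frac{d}{d_{l}}=d_{1}\cdots d_{l-1}$. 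First I would estimate $\delta_{\otimes}^{BM}(P,Q)$ for tensorial bodies $P,Q$ with sections $P^{i},Q^{i}$: writing $P^{\pi}:=P^{1}\otimes_{\pi}\cdots\otimes_{\pi}P^{l}$ and $Q^{\pi}:=Q^{1}\otimes_{\pi}\cdots\otimes_{\pi}Q^{l}$ (likewise $P^{\epsilon},Q^{\epsilon}$ for the injective products), Corollary \ref{cor:equivalence of tensor body} gives $P^{\pi}\subseteq P\subseteq P^{\epsilon}$ and $Q^{\pi}\subseteq Q\subseteq Q^{\epsilon}$, and by \cite[Proposition 2.4]{maite} one also has $P^{\epsilon}\subseteq\frac{d}{d_{l}}P^{\pi}$ and $Q^{\epsilon}\subseteq\frac{d}{d_{l}}Q^{\pi}$.

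For (1): by the previous Proposition there is a $T=T_{1}\otimes\cdots\otimes T_{l}\in GL_{\otimes}$ realizing $\delta^{BM}(P^{1},Q^{1})\cdots\delta^{BM}(P^{l},Q^{l})$ as a bound for $\delta^{BM}(P^{\pi},Q^{\pi})$; after rescaling $T$ by a positive scalar we may assume $Q^{\pi}\subseteq TP^{\pi}\subseteq\mu Q^{\pi}$ with $\mu\leq\prod_{i}\delta^{BM}(P^{i},Q^{i})$. Since $T\in GL_{\otimes}$, equation (\ref{eq:Imgen de un projectivo por una GLSIGMA}) and the duality of Proposition \ref{prop: duality} give $TP^{\pi}=(T_{1}P^{1})\otimes_{\pi}\cdots\otimes_{\pi}(T_{l}P^{l})$ and $TP^{\epsilon}=(T_{1}P^{1})\otimes_{\epsilon}\cdots\otimes_{\epsilon}(T_{l}P^{l})$ — the same computation as in the proof of Theorem \ref{thm:GLsigma preserves tensorial bodies}. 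Then
\[
Q^{\pi}\subseteq TP^{\pi}\subseteq TP\subseteq TP^{\epsilon}\subseteq\tfrac{d}{d_{l}}TP^{\pi}\subseteq\tfrac{d}{d_{l}}\mu Q^{\pi}\subseteq\tfrac{d}{d_{l}}\mu Q.
\]
This already shows $Q\subseteq\frac{d}{d_{l}}\mu Q\cdot$(something); to close the chain on the left one also needs $Q\subseteq\frac{d}{d_{l}}TP^{\pi}$, which follows from $Q\subseteq Q^{\epsilon}\subseteq\frac{d}{d_{l}}Q^{\pi}\subseteq\frac{d}{d_{l}}TP^{\pi}$ after possibly enlarging the scalar; combining, $Q\subseteq T'P\subseteq(\frac{d}{d_{l}})^{2}\mu Q$ for a suitable rescaling $T'$ of $T$, whence $\delta_{\otimes}^{BM}(P,Q)\leq(\frac{d}{d_{l}})^{2}\mu=(d_{1}\cdots d_{l-1})^{2}\prod_{i}\delta^{BM}(P^{i},Q^{i})$, which is (1).

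For (2): apply (1) together with the classical John-type bound $\delta^{BM}(P^{i},Q^{i})\leq\delta^{BM}(P^{i},B_{2}^{d_{i}})\delta^{BM}(B_{2}^{d_{i}},Q^{i})\leq d_{i}$ — more precisely, each $\delta^{BM}$ of a $d_{i}$-dimensional body to the Euclidean ball is at most $\sqrt{d_{i}}$ by F. John's theorem, so $\delta^{BM}(P^{i},Q^{i})\leq d_{i}$ — giving $\prod_{i}\delta^{BM}(P^{i},Q^{i})\leq d_{1}\cdots d_{l}$, and hence $\delta_{\otimes}^{BM}(P,Q)\leq(d_{1}\cdots d_{l-1})^{2}(d_{1}\cdots d_{l})$. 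The only mild obstacle is bookkeeping the rescalings: $\delta_{\otimes}^{BM}$ is defined via $Q\subseteq TP\subseteq\lambda Q$, so one must absorb the positive scalars coming from the previous Proposition and from \cite[Proposition 2.4]{maite} into the choice of $T\in GL_{\otimes}$ (legitimate, since scalar multiples of identity lie in $GL_{\otimes}$ and Proposition \ref{prop:tensorial bodies closed for polars scalar mult} shows the class is scaling-stable), making sure the two-sided inclusion really holds with the claimed constant; no deeper difficulty is expected.
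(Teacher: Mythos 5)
Your proof is correct and follows essentially the same approach the paper intends: combine the multiplicativity bound from the preceding proposition (via maps $T_1\otimes\cdots\otimes T_l\in GL_\otimes$) with the sandwich $P^\pi\subseteq P\subseteq P^\epsilon\subseteq\frac{d}{d_l}P^\pi$ coming from \cite[Proposition 2.4]{maite}, then for part (2) insert John's theorem to bound each factor $\delta^{BM}(P^i,Q^i)\leq d_i$. The rescaling bookkeeping you flag at the end is indeed the only point requiring care, and your chain $Q\subseteq\frac{d}{d_l}TP\subseteq\left(\frac{d}{d_l}\right)^2\mu Q$ handles it correctly.
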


\section{tensorial ellipsoids}

In this section we give  a complete description of the  ellipsoids in $\otimes_{i=1}^{l}\mathbb{R}^{d_i}$ which  are also tensorial bodies (Corollary \ref{cor:representation of ellipsoids 1}). To this end, we first introduce some definitions.

Recall that an ellipsoid $\mathcal{E}\subset V$ in a vector space of dimension $d$ is defined as the image of the Euclidean ball $B_{2}^{d}$ by a linear isomorphism $T:\mathbb{R}^{d}\rightarrow V$.

In the case of ellipsoids in $\otimes_{i=1}^{l}\mathbb{R}^{d_{i}}$, alternatively, we will say  that $\mathcal{E}\subset\otimes_{i=1}^{l}\mathbb{R}^{d_{i}}$ is an \textbf{ellipsoid} if $\mathcal{E}=T\left(B_{2}^{d_{1},\ldots,d_{l}}\right)$ for some linear isomorphism $T:\otimes_{i=1}^{l}\mathbb{R}^{d_{i}}\rightarrow\otimes_{i=1}^{l}\mathbb{R}^{d_{i}},$ providing we have identified $B_2^d=B_{2}^{d_{1},\ldots,d_{l}}$ (see Subsection 2.1).

\begin{defn}
An ellipsoid $\mathcal{E}\subset\otimes_{i=1}^{l}\mathbb{R}^{d_{i}}$
is called a \textbf{tensorial ellipsoid in}  $\mathbf{\otimes_{i=1}^{l}\mathbb{R}^{d_{i}}}$ if $\mathcal{E}$ is also a tensorial body in $\otimes_{i=1}^{l}\mathbb{R}^{d_{i}}.$

The set of tensorial ellipsoids in $\otimes_{i=1}^{l}\mathbb{R}^{d_{i}}$ will be denoted by $\mathscr{E}_{\otimes}\left(\otimes_{i=1}^{l}\mathbb{R}^{d_{i}}\right).$
\end{defn}

If $\mathcal{E}_i=T_{i}\left(B_2^{d_i}\right),$ $i=1,\ldots,l$ are ellipsoids in $\mathbb{R}^{d_{i}},$ $i=1,...,l$ respectively, then the \textbf{Hilbertian tensor product of $\mathcal{E}_1,\ldots,\mathcal{E}_l$}, introduced in \cite{Aubrun2006}, is defined as
\begin{equation*}
\mathcal{E}_1\otimes_2\cdots\otimes_2\mathcal{E}_l:=T_1\otimes\cdots\otimes T_l\left(B_{2}^{d_{1},\ldots,d_{l}}\right).
\end{equation*}
It can be directly proved that $\mathcal{E}_1\otimes_2\cdots\otimes_2\mathcal{E}_l$ is the closed unit ball of the Hilbert tensor product $\otimes_{H,i=1}^l\left(\mathbb{R}^{d_{i}},g_{\mathcal{E}_{i}}\right)$. Thus, Hilbertian tensor products of ellipsoids are the first examples of tensorial ellipsoids. In particular for the Euclidean ball we have:
\begin{equation*}
B_2^{d_1,\ldots,d_l}=B_2^{d_1}\otimes_2\cdots\otimes_{2}B_2^{d_l}\in\mathscr{E}_{\otimes}\left(\otimes_{i=1}^{l}\mathbb{R}^{d_{i}}\right).
\end{equation*}
Actually, in Theorem \ref{thm:caracterizacion elipsoides varios productos} we prove that $B_2^{d_1,\ldots,d_l}$ is the only ellipsoid between $B_{2}^{d_{1}}\otimes_{\pi}\cdots\otimes_{\pi}B_{2}^{d_{l}}$ and $B_{2}^{d_{1}}\otimes_{\epsilon}\cdots\otimes_{\epsilon}B_{2}^{d_{l}}$. From this, we obtain  that the only  tensorial ellipsoids are the  Hilbertian tensor product of  ellipsoids (Corollary \ref{cor:representation of ellipsoids 1}).

\begin{thm}
\label{thm:caracterizacion elipsoides varios productos}Let $\mathcal{E}\subset\otimes_{i=1}^{l}\mathbb{R}^{d_{i}}$ be an ellipsoid such that
\begin{equation}
B_{2}^{d_{1}}\otimes_{\pi}\cdots\otimes_{\pi}B_{2}^{d_{l}}\subset\mathcal{E}\subset B_{2}^{d_{1}}\otimes_{\epsilon}\cdots\otimes_{\epsilon}B_{2}^{d_{l}},\label{eq:ecuacion elipsoides}
\end{equation}
 then $\mathcal{E}=B_{2}^{d_{1},\ldots,d_{l}}.$
\end{thm}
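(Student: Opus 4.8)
The plan is to reduce the general multi-factor statement to the two-factor case and then analyze that case directly. Write $\mathcal{E} = T(B_2^{d_1,\ldots,d_l})$ for some $T \in GL(\otimes_{H,i=1}^l \mathbb{R}^{d_i})$; equivalently, $\mathcal{E}$ is the unit ball of a scalar product $\langle\cdot,\cdot\rangle_{\mathcal{E}}$ on $\otimes_{i=1}^l\mathbb{R}^{d_i}$. By Theorem \ref{thm:caracterizacion bola de norma razon cruza} (or rather the hypothesis (\ref{eq:ecuacion elipsoides}) directly together with Proposition \ref{prop:razonable cruzada version convexa}), $g_{\mathcal{E}}$ is a reasonable crossnorm on the spaces $(\mathbb{R}^{d_i},\|\cdot\|_2)$, and moreover, since $\mathcal{E}$ is a tensorial body with respect to $B_2^{d_1},\ldots,B_2^{d_l}$, Proposition \ref{prop:unicidad de secciones} forces those sections to be $B_2^{d_i}$ up to a scalar product $\lambda_1\cdots\lambda_l = 1$; rescaling, we may assume each section is exactly $B_2^{d_i}$, so that $g_{\mathcal{E}}(x^1\otimes\cdots\otimes x^l) = \|x^1\|_2\cdots\|x^l\|_2$ on decomposable vectors. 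So the Hilbert norm $\|\cdot\|_H$ and the norm $g_{\mathcal{E}}$ agree on all decomposable vectors; the task is to show they agree everywhere, i.e. that the scalar products $\langle\cdot,\cdot\rangle_{\mathcal{E}}$ and $\langle\cdot,\cdot\rangle_H$ coincide.

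First I would handle $l=2$. Here $\otimes_{i=1}^2\mathbb{R}^{d_i}$ is identified with the space of $d_1\times d_2$ matrices, $\langle\cdot,\cdot\rangle_H$ is the Frobenius inner product, and decomposable vectors are rank-one matrices. Both $\langle\cdot,\cdot\rangle_{\mathcal{E}}$ and $\langle\cdot,\cdot\rangle_H$ are symmetric bilinear forms agreeing on the cone of rank-one matrices; since $\langle x^1\otimes x^2, x^1\otimes x^2\rangle = \|x^1\|_2^2\|x^2\|_2^2$ and polarization recovers $\langle x^1\otimes x^2, y^1\otimes y^2\rangle$ from values on decomposables of the form $(x^1+ty^1)\otimes(x^2+sy^2)$ — wait, that is not linear — so instead I would use that the quadratic forms agree on the Zariski-dense (in fact, spanning with respect to a dense set of scaled copies) set of rank-one matrices: two quadratic forms on a finite-dimensional space that agree on a set whose linear span is the whole space and which is ``quadratically rich enough'' must be equal. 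Concretely: the difference $B := \langle\cdot,\cdot\rangle_{\mathcal{E}} - \langle\cdot,\cdot\rangle_H$ is a symmetric bilinear form vanishing on every rank-one matrix; plugging $x^1\otimes x^2 + y^1\otimes y^2$ and using bilinearity gives $B(x^1\otimes x^2, y^1\otimes y^2) = 0$ for all $x^i,y^i$; since decomposable vectors span the whole tensor product, $B\equiv 0$. That settles $l=2$, giving $\mathcal{E} = B_2^{d_1,d_2}$.

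For general $l$, I would proceed by induction, viewing $\otimes_{i=1}^l\mathbb{R}^{d_i} = \mathbb{R}^{d_1}\otimes\left(\otimes_{i=2}^l\mathbb{R}^{d_i}\right)$. The key point is that the sandwiching (\ref{eq:ecuacion elipsoides}) should descend: using associativity of the projective and injective tensor products of convex bodies (projective tensor products associate by definition; injective ones by Proposition \ref{prop: duality} and polarity), one gets $B_2^{d_1}\otimes_\pi\left(B_2^{d_2}\otimes_\pi\cdots\otimes_\pi B_2^{d_l}\right)\subseteq\mathcal{E}\subseteq B_2^{d_1}\otimes_\epsilon\left(B_2^{d_2}\otimes_\epsilon\cdots\otimes_\epsilon B_2^{d_l}\right)$, but the inner factor on the right is generally larger than an ellipsoid, so this does not immediately reduce. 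Instead I would argue on the quadratic-form level throughout: from the two-factor argument applied to the bracketing $\mathbb{R}^{d_1}\otimes(\text{rest})$, the form $\langle\cdot,\cdot\rangle_{\mathcal{E}}$ is determined on $x^1\otimes w$ for $w$ decomposable in the rest, namely $\|x^1\|_2^2\,g_{\mathcal{E}'}(w)^2$ for an induced norm on the rest; and the sandwich hypothesis restricts, on decomposables of the second factor, to exactly the Euclidean sandwich for $l-1$ factors. By induction the induced form on the rest is the Hilbert form, and then the $l=2$ polarization argument over the bracketing $\mathbb{R}^{d_1}\otimes(\text{rest})$ finishes it.

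\medskip

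\noindent\textbf{Main obstacle.} The delicate point is the reduction step: I need to know that if $\mathcal{E}$ is squeezed between the full projective and full injective tensor bodies of Euclidean balls, then the ``partial'' tensor norm it induces on a grouped factor $\otimes_{i=2}^l\mathbb{R}^{d_i}$ is again squeezed between that factor's projective and injective Euclidean tensor bodies — so that the inductive hypothesis applies to it and identifies it as Euclidean. This requires a careful use of the restriction/section maps (as in (\ref{eq: Q1 Ql})) and the associativity of $\otimes_\pi$ and $\otimes_\epsilon$ for convex bodies. Everything else — polarization of bilinear forms, the fact that decomposable vectors span, rescaling sections via Proposition \ref{prop:unicidad de secciones} — is routine.
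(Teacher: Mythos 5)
Your $l=2$ base case contains a fatal gap, and it is precisely the point where the theorem's hypothesis must be used in full strength. You assert that since the symmetric bilinear form $B := \langle\cdot,\cdot\rangle_{\mathcal{E}} - \langle\cdot,\cdot\rangle_H$ vanishes on every rank-one matrix, bilinearity forces $B(x^1\otimes x^2, y^1\otimes y^2)=0$, hence $B\equiv 0$. This is false. Plugging $u=x^1\otimes x^2+y^1\otimes y^2$ into $B(u,u)$ does not help, because $u$ is not rank one and so $B(u,u)=0$ is not given. What one actually gets by evaluating $B$ on the genuine rank-one curve $u_{s,t}=(x^1+sy^1)\otimes(x^2+ty^2)$ and matching coefficients in $s,t$ is only the antisymmetry relation
\begin{equation*}
B\bigl(x^1\otimes x^2,\,y^1\otimes y^2\bigr)=-B\bigl(x^1\otimes y^2,\,y^1\otimes x^2\bigr),
\end{equation*}
which is strictly weaker than vanishing. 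A concrete counterexample: on $\mathbb{R}^2\otimes\mathbb{R}^2\cong M_{2\times 2}(\mathbb{R})$, the symmetric bilinear form $D(A,B):=\tfrac12\bigl[\det(A+B)-\det A-\det B\bigr]$ satisfies $D(u,u)=\det u=0$ for every rank-one $u$, yet $D\not\equiv 0$; so $\langle\cdot,\cdot\rangle_H+\epsilon D$ is a positive definite form agreeing with $\langle\cdot,\cdot\rangle_H$ on all decomposables but not equal to it.

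The resolution — and this is where your proposal departs from the paper — is that you only extract one half of the sandwich. Proposition \ref{prop:razonable cruzada version convexa} gives two conditions: $g_{\mathcal{E}}$ equals the Euclidean product norm on decomposables \emph{and} $g_{\mathcal{E}^{\circ}}$ does as well. In matrix terms (Proposition \ref{prop:elipsoides producto interno}), writing $g_{\mathcal{E}}(u)^2=\langle S^{-1}u,u\rangle_H$ with $S=TT^t$, both $S-I$ and $S^{-1}-I$ satisfy the antisymmetry relation above. In the above $2\times 2$ example this condition on $S^{-1}$ holds, but the corresponding condition on $S$ fails for $\epsilon\neq 0$. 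The paper translates the joint conditions on $S$ and $S^{-1}$ into the block structure (\ref{eq:matrices theorem ellipsoids1}) — identity blocks on the diagonal, antisymmetric off-diagonal blocks — and proves via a dedicated matrix Lemma \ref{lem:BIG lema} that a positive definite $S$ with this structure shared by $S^{-1}$ must equal $I_d$. That matrix argument (carried by the identity $SS^{-1}=I$ and positive definiteness of a sub-block) is the genuine content of the two-factor case, and it cannot be replaced by the polarization shortcut you propose. Your inductive reduction to two factors is in the right spirit (the paper's Lemma \ref{lem:para pruema de inducion} makes the section argument precise and also needs both $\mathcal{E}$ and $\mathcal{E}^{\circ}$), but without a correct two-factor base case the proof does not go through.
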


We will give the proof of the theorem at the end of the section. Before, we will prove Corollary \ref{cor:representation of ellipsoids 1} and several related results.

\begin{cor}
\label{cor:representation of ellipsoids 1}If $\mathcal{E}$ is a tensorial ellipsoid in $\otimes_{i=1}^{l}\mathbb{R}^{d_{i}}$, then there exist linear isomorphisms $T_{i}:\mathbb{R}^{d_i}\rightarrow\mathbb{R}^{d_i}$ for $i=1,\ldots,l$ such that
\[
\mathcal{E}=T_{1}\otimes\cdots\otimes T_{l}\left(B_2^{d_1,\ldots,d_l}\right)=T_{1}\left(B_{2}^{d_{1}}\right)\otimes_2\cdots\otimes_{2}T_{l}\left(B_{2}^{d_{l}}\right).
\]
\end{cor}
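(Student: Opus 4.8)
The plan is to reduce the corollary to Theorem~\ref{thm:caracterizacion elipsoides varios productos}: I will transport $\mathcal{E}$ by a suitable element of $GL_{\otimes}$ so that all its ``one-index sections'' become Euclidean balls, and then quote that uniqueness theorem.

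First I would record that, since $\mathcal{E}$ is a tensorial body, part~(2) of Corollary~\ref{cor:equivalence of tensor body} yields the sandwich $\mathcal{E}^{1}\otimes_{\pi}\cdots\otimes_{\pi}\mathcal{E}^{l}\subseteq\mathcal{E}\subseteq\mathcal{E}^{1}\otimes_{\epsilon}\cdots\otimes_{\epsilon}\mathcal{E}^{l}$, where $\mathcal{E}^{i}\subset\mathbb{R}^{d_{i}}$ are the sections built from the relation~\eqref{eq: Q1 Ql} with the normalizing decomposable boundary vector fixed as in Proposition~\ref{prop:set tensorial bodies closed}. The crucial observation is that each $\mathcal{E}^{i}$ is again an ellipsoid: by definition $\mathcal{E}^{i}=\iota_{i}^{-1}(\mathcal{E})$, where $\iota_{i}\colon\mathbb{R}^{d_{i}}\to\otimes_{j=1}^{l}\mathbb{R}^{d_{j}}$ is the \emph{injective} linear map $x^{i}\mapsto e_{1}^{d_{1}}\otimes\cdots\otimes x^{i}\otimes\cdots\otimes\left(\lambda e_{1}^{d_{l}}\right)$, and the preimage of a centred ellipsoid under an injective linear map is a centred ellipsoid (if $\mathcal{E}=\{v:\langle Av,v\rangle_{H}\le1\}$ with $A$ positive definite, then $\mathcal{E}^{i}=\{x^{i}:\langle\iota_{i}^{t}A\iota_{i}x^{i},x^{i}\rangle\le1\}$ and $\iota_{i}^{t}A\iota_{i}$ is positive definite). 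Since $\mathcal{E}^{i}$ is a genuine $0$-symmetric convex body by the construction in~\eqref{eq: Q1 Ql}, it is nondegenerate, so one may write $\mathcal{E}^{i}=T_{i}(B_{2}^{d_{i}})$ for linear isomorphisms $T_{i}\colon\mathbb{R}^{d_{i}}\to\mathbb{R}^{d_{i}}$.

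Next I would apply $S:=T_{1}^{-1}\otimes\cdots\otimes T_{l}^{-1}\in GL_{\otimes}(\otimes_{i=1}^{l}\mathbb{R}^{d_{i}})$ to the sandwich. By~\eqref{eq:Imgen de un projectivo por  una GLSIGMA} (with the identity permutation) and the dual computation already carried out inside the proof of Theorem~\ref{thm:GLsigma preserves tensorial bodies}, and using $T_{i}^{-1}\mathcal{E}^{i}=B_{2}^{d_{i}}$, one gets $S\left(\mathcal{E}^{1}\otimes_{\pi}\cdots\otimes_{\pi}\mathcal{E}^{l}\right)=B_{2}^{d_{1}}\otimes_{\pi}\cdots\otimes_{\pi}B_{2}^{d_{l}}$ and $S\left(\mathcal{E}^{1}\otimes_{\epsilon}\cdots\otimes_{\epsilon}\mathcal{E}^{l}\right)=B_{2}^{d_{1}}\otimes_{\epsilon}\cdots\otimes_{\epsilon}B_{2}^{d_{l}}$. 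Hence $B_{2}^{d_{1}}\otimes_{\pi}\cdots\otimes_{\pi}B_{2}^{d_{l}}\subseteq S\mathcal{E}\subseteq B_{2}^{d_{1}}\otimes_{\epsilon}\cdots\otimes_{\epsilon}B_{2}^{d_{l}}$, while $S\mathcal{E}$ is still an ellipsoid. Theorem~\ref{thm:caracterizacion elipsoides varios productos} then forces $S\mathcal{E}=B_{2}^{d_{1},\ldots,d_{l}}$, so $\mathcal{E}=S^{-1}(B_{2}^{d_{1},\ldots,d_{l}})=\left(T_{1}\otimes\cdots\otimes T_{l}\right)(B_{2}^{d_{1},\ldots,d_{l}})$, which by the very definition of the Hilbertian tensor product of ellipsoids equals $T_{1}(B_{2}^{d_{1}})\otimes_{2}\cdots\otimes_{2}T_{l}(B_{2}^{d_{l}})$.

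The substantive content of the corollary sits entirely in Theorem~\ref{thm:caracterizacion elipsoides varios productos}, which is assumed; in the reduction above the only nonautomatic points are that sections of a centred ellipsoid are centred ellipsoids and that $GL_{\otimes}$ carries ellipsoids to ellipsoids, both of which are routine linear algebra. I therefore do not expect a serious obstacle here; the one thing worth stating explicitly is the invertibility of the $T_{i}$, which is guaranteed precisely because each $\mathcal{E}^{i}$ is a genuine $0$-symmetric convex body.
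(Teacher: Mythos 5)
Your proposal is correct and follows essentially the same route as the paper's proof: extract the factor bodies from the sandwich, observe they must be ellipsoids, write each as $T_i(B_2^{d_i})$, transport by $T_1^{-1}\otimes\cdots\otimes T_l^{-1}$, and invoke Theorem~\ref{thm:caracterizacion elipsoides varios productos}. The only difference is that you spell out, via the injective section map $\iota_i$, \emph{why} the factor bodies are ellipsoids, a point the paper merely asserts (``Since $\mathcal{E}$ is an ellipsoid we must have that all $A_i$ are ellipsoids'').
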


\begin{proof}
Assume that $\mathcal{E}$ belongs to $\mathscr{E}_{\otimes}\left(\otimes_{i=1}^{l}\mathbb{R}^{d_{i}}\right).$ Then there exist $0$-symmetric convex bodies $A_{i}\subset\mathbb{R}^{d_i},$ $i=1,...,l$ such that
\[
A_{1}\otimes_{\pi}\cdots\otimes_{\pi}A_{l}\subset\mathcal{E}\subset A_{1}\otimes_{\epsilon}\cdots\otimes_{\epsilon}A_{l}.
\]

Since $\mathcal{E}$ is an ellipsoid we must have that all $A_{i},$ $i=1,\ldots,l$ are ellipsoids. Thus, there exist linear isomorphisms $T_{i}:\mathbb{R}^{d_i}\rightarrow\mathbb{R}^{d_i},$ $i=1,\ldots,l$ with $A_{i}=T_{i}\left(B_{2}^{d_{i}}\right).$
From this and Theorem \ref{thm:GLsigma preserves tensorial bodies}, we obtain:
\begin{gather*}
T_{1}\left(B_{2}^{d_{1}}\right)\otimes_{\pi}\cdots\otimes_{\pi}T_{l}\left(B_{2}^{d_{l}}\right)\subset \mathcal{E}\subset T_{1}\left(B_{2}^{d_{1}}\right)\otimes_{\epsilon}\cdots\otimes_{\epsilon}T_{l}\left(B_{2}^{d_{l}}\right)\\
B_{2}^{d_{1}}\otimes_{\pi}\cdots\otimes_{\pi}B_{2}^{d_{l}}\subset\left(T_{1}^{-1}\otimes\cdots\otimes T_{l}^{-1}\right)\mathcal{E}\subset B_{2}^{d_{1}}\otimes_{\epsilon}\cdots\otimes_{\epsilon}B_{2}^{d_{l}}.
\end{gather*}

Therefore, Theorem \ref{thm:caracterizacion elipsoides varios productos}
implies that
$\left(T_{1}^{-1}\otimes\cdots\otimes T_{l}^{-1}\right)\mathcal{E}=B_2^{d_1,\ldots,d_l}.$
 Thus, $\mathcal{E}=T_{1}\otimes\cdots\otimes T_{l}\left(B_2^{d_1,\ldots,d_l}\right)$ or equivalently $\mathcal{E}=T_{1}\left(B_{2}^{d_{1}}\right)\otimes_2\cdots\otimes_{2}T_{l}\left(B_{2}^{d_{l}}\right).$
\end{proof}

Every ellipsoid $\mathcal{E}=T\left(B_{2}^{d_{1},\ldots,d_{l}}\right)\subset\otimes_{i=1}^{l}\mathbb{R}^{d_i}$ is the closed unit ball associated to the scalar product $\left\langle\cdot,\cdot\right\rangle _{\mathcal{E}}:=\left\langle T^{-1}\left(\cdot\right),T^{-1}\left(\cdot\right)\right\rangle _{H}$. In view of this, the following proposition describes the relation between $\left\langle\cdot,\cdot\right\rangle _{\mathcal{E}}$  and $\left\langle\cdot,\cdot\right\rangle _{H}$ on decomposable vectors, when $\mathcal{E}$ is a tensorial ellipsoid in $\mathbb{R}^m\otimes\mathbb{R}^n.$
\begin{prop}
\label{prop:elipsoides producto interno}Let  $m,n$ be natural
numbers. If $\mathcal{E}=T\left(B_{2}^{m,n}\right)\subset\mathbb{R}^m\otimes\mathbb{R}^n$ is an ellipsoid, then
\begin{equation}
\label{eq: for prop elli and p interno}
B_{2}^{m}\otimes_{\pi}B_{2}^{n}\subset\mathcal{E}\subset B_{2}^{m}\otimes_{\epsilon}B_{2}^{n}
\end{equation}
 if and only if for $L=T^{-1},T^{t}$ the following relations hold :
\begin{equation}
\label{eq: ellipsoid Tt and T-1}
\left\langle x,z\right\rangle\left\langle y,w\right\rangle  =  \frac{\left\langle L \left(x\otimes y\right), L \left(z\otimes w\right)\right\rangle _{H}+\left\langle L \left(x\otimes w\right), L \left(z\otimes y\right)\right\rangle _{H}}{2}
\end{equation}
for each $x,z\in\mathbb{R}^{m}$ and $y,w\in\mathbb{R}^{n}.$
\end{prop}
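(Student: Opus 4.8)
The plan is to reduce both implications to Proposition~\ref{prop:razonable cruzada version convexa} together with a polarization argument. For an invertible linear map $L$ on $\mathbb{R}^m\otimes\mathbb{R}^n$ I write $\beta_L(u,v):=\langle Lu,Lv\rangle _H$ for the associated symmetric positive definite bilinear form. First I would record that $\mathcal{E}=T(B_2^{m,n})$ gives $g_{\mathcal{E}}(u)=\|T^{-1}u\|_H$, and that a direct computation from the definition of the polar yields $\mathcal{E}^{\circ}=(T^{t})^{-1}(B_2^{m,n})$, hence $g_{\mathcal{E}^{\circ}}(u)=\|T^{t}u\|_H$. Thus, on decomposable vectors, $g_{\mathcal{E}}(x\otimes y)^2=\beta_{T^{-1}}(x\otimes y,x\otimes y)$ and $g_{\mathcal{E}^{\circ}}(x\otimes y)^2=\beta_{T^{t}}(x\otimes y,x\otimes y)$. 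Setting $z=x$ and $w=y$ in (\ref{eq: ellipsoid Tt and T-1}) then shows that (\ref{eq: ellipsoid Tt and T-1}) for $L=T^{-1}$ (resp.\ $L=T^{t}$), restricted to the diagonal, is exactly the statement $g_{\mathcal{E}}(x\otimes y)=\|x\|_2\|y\|_2$ (resp.\ $g_{\mathcal{E}^{\circ}}(x\otimes y)=\|x\|_2\|y\|_2$).

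For the forward implication I would assume (\ref{eq: for prop elli and p interno}) and apply Proposition~\ref{prop:razonable cruzada version convexa} with $Q=\mathcal{E}$, $Q_1=B_2^m$, $Q_2=B_2^n$ (using $(B_2^m)^{\circ}=B_2^m$ and $(B_2^n)^{\circ}=B_2^n$) to obtain $g_{\mathcal{E}}(x\otimes y)=\|x\|_2\|y\|_2$ and $g_{\mathcal{E}^{\circ}}(x\otimes y)=\|x\|_2\|y\|_2$; by the previous paragraph this says $\beta_L(x\otimes y,x\otimes y)=\langle x,x\rangle\langle y,y\rangle$ for $L=T^{-1}$ and for $L=T^{t}$. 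It then remains to prove the following polarization fact: if $\beta$ is symmetric bilinear on $\mathbb{R}^m\otimes\mathbb{R}^n$ with $\beta(x\otimes y,x\otimes y)=\langle x,x\rangle\langle y,y\rangle$ for all $x,y$, then $\beta(x\otimes y,z\otimes w)+\beta(x\otimes w,z\otimes y)=2\langle x,z\rangle\langle y,w\rangle$. I would first polarize in the left factor ($x\mapsto x+z$, then cancel the two diagonal terms) to get $\beta(x\otimes y,z\otimes y)=\langle x,z\rangle\langle y,y\rangle$, and then polarize this identity in the right factor ($y\mapsto y+w$, cancelling the two now-known terms), which produces precisely the claimed symmetrized identity. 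Applying it to $\beta_{T^{-1}}$ and to $\beta_{T^{t}}$ yields (\ref{eq: ellipsoid Tt and T-1}) for $L=T^{-1},T^{t}$.

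For the converse I would set $z=x$, $w=y$ in (\ref{eq: ellipsoid Tt and T-1}) to recover $g_{\mathcal{E}}(x\otimes y)=\|x\|_2\|y\|_2=g_{\mathcal{E}^{\circ}}(x\otimes y)$ for every decomposable vector. Then, for $x\in B_2^m$ and $y\in B_2^n$, one has $g_{\mathcal{E}}(x\otimes y)\le1$, so $x\otimes y\in\mathcal{E}$; since $\mathcal{E}$ is convex and $B_2^m\otimes_{\pi}B_2^n$ is by definition the convex hull of such vectors, $B_2^m\otimes_{\pi}B_2^n\subseteq\mathcal{E}$. Running the same argument for $\mathcal{E}^{\circ}$ gives $B_2^m\otimes_{\pi}B_2^n\subseteq\mathcal{E}^{\circ}$, and then taking polars, using Proposition~\ref{prop: duality}(2) together with $(B_2^m)^{\circ}=B_2^m$, $(B_2^n)^{\circ}=B_2^n$, gives $\mathcal{E}=\mathcal{E}^{\circ\circ}\subseteq(B_2^m\otimes_{\pi}B_2^n)^{\circ}=B_2^m\otimes_{\epsilon}B_2^n$, which is (\ref{eq: for prop elli and p interno}).

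The step requiring the most care is the polarization, specifically making precise why one can recover only the \emph{symmetrized} form: the pairing $(x\otimes y,z\otimes w)\mapsto\beta(x\otimes y,z\otimes w)$ is not symmetric under exchanging $y$ and $w$, whereas the diagonal values $\beta(x\otimes y,x\otimes y)$ depend only on its symmetrization, so the double polarization necessarily produces the average on the right-hand side of (\ref{eq: ellipsoid Tt and T-1}). This is also why the statement is phrased for two factors: for $l\ge3$ the analogous identity would involve an average over all pairings of the entries of the two decomposable vectors. Finally, it is essential to use both $L=T^{-1}$ and $L=T^{t}$, since the left inclusion in (\ref{eq: for prop elli and p interno}) encodes the behavior of $g_{\mathcal{E}}$ on decomposable vectors and the right one that of $g_{\mathcal{E}^{\circ}}$.
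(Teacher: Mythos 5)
Your proof is correct and follows essentially the same route as the paper's: use Proposition~\ref{prop:razonable cruzada version convexa} to translate (\ref{eq: for prop elli and p interno}) into multiplicativity of $g_{\mathcal{E}}$ and $g_{\mathcal{E}^{\circ}}$ on decomposable vectors, then polarize twice (once in each tensor factor) to obtain the symmetrized identity (\ref{eq: ellipsoid Tt and T-1}), and conversely set $z=x$, $w=y$ to recover multiplicativity and hence (\ref{eq: for prop elli and p interno}). The only cosmetic differences are that you polarize first in the left factor while the paper starts with the right factor, and you package the double polarization as a clean lemma about symmetric bilinear forms; the mathematical content is the same.
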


\begin{proof}
Recall that if $\mathcal{E}=T\left(B_{2}^{m,n}\right)\subset\mathbb{R}^m\otimes\mathbb{R}^n$ is an ellipsoid  then $\mathcal{E}^{\circ}=\left(T^{t}\right)^{-1}\left(B_{2}^{m,n}\right).$

Assume that (\ref{eq: ellipsoid Tt and T-1}) holds for $T^{-1}$ and $T^t.$ Then, if we make $x=y$ and $z=w$ in (\ref{eq: ellipsoid Tt and T-1}), we have $g_{\mathcal{E}}\left(x\otimes y\right)=\Vert x\Vert_{2}\Vert y\Vert_{2}$ and $g_{\mathcal{E}^{\circ}}\left(x\otimes y\right)=\Vert x\Vert_{2}\Vert y\Vert_{2}.$ Thus, from Proposition \ref{prop:razonable cruzada version convexa}, we get that  (\ref{eq: for prop elli and p interno}) holds.

Assume that (\ref{eq: for prop elli and p interno}) holds. Let $x,z\in\mathbb{R}^{m}$ and $y,w\in\mathbb{R}^{n}.$ From Proposition \ref{prop:razonable cruzada version convexa}, we know that $g_{\mathcal{E}}\left(x\otimes y\right)=\Vert x\Vert_{2}\Vert y\Vert_{2}.$ Thus,
$
\left\Vert T^{-1}\left(x\otimes y\right)\right\Vert _{H}=\left\Vert x\right\Vert_{2}\left\Vert y\right\Vert_{2}.
$

Now, the polarization formula applied to $\left\langle T^{-1}\left(x\otimes y\right),T^{-1}\left(x\otimes w\right)\right\rangle _{H}$ and the latter equality imply:
\begin{equation}
\label{eq: for polarization}
\left\langle T^{-1}\left(x\otimes y\right),T^{-1}\left(x\otimes w\right)\right\rangle _{H}=\left\Vert x\right\Vert_{2}^{2}\left\langle y,w\right\rangle.
\end{equation}
From the polarization formula and (\ref{eq: for polarization}), we have
\begin{gather*}
\left\langle x,z\right\rangle\left\langle y,w\right\rangle  =  \left(\frac{\left\Vert x+z\right\Vert_{2}^{2}-\left\Vert x-z\right\Vert_{2}^{2}}{4}\right)\left\langle y,w\right\rangle 
\end{gather*}
Thus, using (\ref{eq: for polarization}) in the last equality, we get that (\ref{eq: ellipsoid Tt and T-1}) holds for $L=T^{-1}.$
To finish the proof, observe that $\mathcal{E}^{\circ}$ also satisfies (\ref{eq: for prop elli and p interno}), see Proposition \ref{prop:tensorial bodies closed for polars scalar mult}. Hence, (\ref{eq: ellipsoid Tt and T-1}) holds for $T^t$.
\end{proof}

\begin{lem}
\label{lem:para pruema de inducion}Let $\mathcal{E}$ be a tensorial ellipsoid
in $\otimes_{i=1}^{l}\mathbb{R}^{d_{i}}.$
For every $z^{l}\in\partial B_{2}^{d_{l}},$ let
\begin{align*}
i_{z^{l}}:\mathbb{R}^{d_{1}}\otimes\cdots\otimes\mathbb{R}^{d_{l-1}} & \rightarrow\mathbb{R}^{d_{1}}\otimes\cdots\otimes\mathbb{R}^{d_{l-1}}\otimes\mathbb{R}^{d_l}\\
x^{1}\otimes\cdots\otimes x^{l-1} & \rightarrow x^{1}\otimes\cdots\otimes x^{l-1}\otimes z^{l},
\end{align*}
 and $\mathcal{E}_{z^{l}}:=i_{z^{l}}^{-1}\left(\mathcal{E}\right).$
Then, if
\[
B_{2}^{d_{1}}\otimes_{\pi}\cdots\otimes_{\pi}B_{2}^{d_{l}}\subset\mathcal{E}\subset B_{2}^{d_{1}}\otimes_{\epsilon}\cdots\otimes_{\epsilon}B_{2}^{d_{l}}
\]
one has
\[
B_{2}^{d_{1}}\otimes_{\pi}\cdots\otimes_{\pi}B_{2}^{d_{l-1}}\subset\mathcal{E}_{z^{l}}\subset B_{2}^{d_{1}}\otimes_{\epsilon}\cdots\otimes_{\epsilon}B_{2}^{d_{l-1}}.
\]
\end{lem}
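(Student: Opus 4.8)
The plan is to prove the two inclusions defining the sandwich for $\mathcal{E}_{z^{l}}$ separately. The observation that makes everything work, and is used repeatedly, is that since $z^{l}\in\partial B_{2}^{d_{l}}$, i.e. $\|z^{l}\|_{2}=1$, the linear injection $i_{z^{l}}$ is an \emph{isometric} embedding of $\otimes_{H,i=1}^{l-1}\mathbb{R}^{d_{i}}$ into $\otimes_{H,i=1}^{l}\mathbb{R}^{d_{i}}$: on decomposables $\langle x^{1}\otimes\cdots\otimes x^{l-1}\otimes z^{l},\,y^{1}\otimes\cdots\otimes y^{l-1}\otimes z^{l}\rangle_{H}=\langle z^{l},z^{l}\rangle\prod_{i<l}\langle x^{i},y^{i}\rangle=\prod_{i<l}\langle x^{i},y^{i}\rangle$, so $\langle i_{z^{l}}(u),i_{z^{l}}(v)\rangle_{H}=\langle u,v\rangle_{H}$ for all $u,v$. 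As a preliminary I would also record that the hypothesis $B_{2}^{d_{1}}\otimes_{\pi}\cdots\otimes_{\pi}B_{2}^{d_{l}}\subset\mathcal{E}\subset B_{2}^{d_{1}}\otimes_{\epsilon}\cdots\otimes_{\epsilon}B_{2}^{d_{l}}$ means, by Proposition \ref{prop:razonable cruzada version convexa} with each $Q_{i}=B_{2}^{d_{i}}$ (self-polar, $g_{B_{2}^{d_{i}}}=\|\cdot\|_{2}$), that $g_{\mathcal{E}}(x^{1}\otimes\cdots\otimes x^{l})=g_{\mathcal{E}^{\circ}}(x^{1}\otimes\cdots\otimes x^{l})=\|x^{1}\|_{2}\cdots\|x^{l}\|_{2}$ for every decomposable vector. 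Finally, since $i_{z^{l}}$ is linear and injective, $\mathcal{E}_{z^{l}}=i_{z^{l}}^{-1}(\mathcal{E})$ is compact and $0$-symmetric, and the first inclusion below exhibits an interior point, so it is a $0$-symmetric convex body.

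For the inclusion $B_{2}^{d_{1}}\otimes_{\pi}\cdots\otimes_{\pi}B_{2}^{d_{l-1}}\subset\mathcal{E}_{z^{l}}$, I would check that $\mathcal{E}_{z^{l}}$ contains every generator of the left-hand body and conclude by convexity. If $x^{i}\in B_{2}^{d_{i}}$ for $i=1,\dots,l-1$, then $g_{\mathcal{E}}(x^{1}\otimes\cdots\otimes x^{l-1}\otimes z^{l})=\|x^{1}\|_{2}\cdots\|x^{l-1}\|_{2}\,\|z^{l}\|_{2}\le 1$, hence $i_{z^{l}}(x^{1}\otimes\cdots\otimes x^{l-1})=x^{1}\otimes\cdots\otimes x^{l-1}\otimes z^{l}\in\mathcal{E}$, i.e. $x^{1}\otimes\cdots\otimes x^{l-1}\in\mathcal{E}_{z^{l}}$. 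Since $\mathcal{E}_{z^{l}}$ is convex and $B_{2}^{d_{1}}\otimes_{\pi}\cdots\otimes_{\pi}B_{2}^{d_{l-1}}$ is by definition the convex hull of such vectors, the inclusion follows.

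For the inclusion $\mathcal{E}_{z^{l}}\subset B_{2}^{d_{1}}\otimes_{\epsilon}\cdots\otimes_{\epsilon}B_{2}^{d_{l-1}}$, I would pass to polars. By Proposition \ref{prop: duality} and self-polarity of the Euclidean balls, $B_{2}^{d_{1}}\otimes_{\epsilon}\cdots\otimes_{\epsilon}B_{2}^{d_{l-1}}=(B_{2}^{d_{1}}\otimes_{\pi}\cdots\otimes_{\pi}B_{2}^{d_{l-1}})^{\circ}$, so it suffices to show $|\langle u,v\rangle_{H}|\le 1$ for all $u\in\mathcal{E}_{z^{l}}$ and all $v\in B_{2}^{d_{1}}\otimes_{\pi}\cdots\otimes_{\pi}B_{2}^{d_{l-1}}$; since that body is the convex symmetric hull of the decomposables $x^{1}\otimes\cdots\otimes x^{l-1}$ with $x^{i}\in B_{2}^{d_{i}}$, it is enough to treat $v$ of this form. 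Using the isometry property of $i_{z^{l}}$, $\langle u,x^{1}\otimes\cdots\otimes x^{l-1}\rangle_{H}=\langle i_{z^{l}}(u),\,x^{1}\otimes\cdots\otimes x^{l-1}\otimes z^{l}\rangle_{H}=\langle u\otimes z^{l},\,x^{1}\otimes\cdots\otimes x^{l-1}\otimes z^{l}\rangle_{H}$. Now $u\otimes z^{l}\in\mathcal{E}$ by definition of $\mathcal{E}_{z^{l}}$, and $g_{\mathcal{E}^{\circ}}(x^{1}\otimes\cdots\otimes x^{l-1}\otimes z^{l})=\|x^{1}\|_{2}\cdots\|x^{l-1}\|_{2}\,\|z^{l}\|_{2}\le 1$ puts $x^{1}\otimes\cdots\otimes x^{l-1}\otimes z^{l}$ in $\mathcal{E}^{\circ}$; hence the inner product has absolute value at most $1$ by the definition of the polar set. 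This yields the second inclusion and finishes the proof.

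I expect the only delicate point to be the second inclusion: one has to recognize that $\mathcal{E}\subset B_{2}^{d_{1}}\otimes_{\epsilon}\cdots\otimes_{\epsilon}B_{2}^{d_{l}}$ is really a statement about $\mathcal{E}^{\circ}$, and that the section operation $i_{z^{l}}^{-1}$ is compatible with polarity precisely because $i_{z^{l}}$ preserves $\langle\cdot,\cdot\rangle_{H}$ — which is exactly where the normalization $z^{l}\in\partial B_{2}^{d_{l}}$ enters. The first inclusion and all the identifications are routine manipulations of the definitions of $\otimes_{\pi}$, $\otimes_{\epsilon}$ and the polar set.
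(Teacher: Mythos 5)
Your proof is correct and follows essentially the same route as the paper's: the paper also reduces the two inclusions to the identities $g_{\mathcal{E}_{z^{l}}}(x^{1}\otimes\cdots\otimes x^{l-1})=\prod_{i<l}\|x^{i}\|_{2}$ and $g_{(\mathcal{E}_{z^{l}})^{\circ}}(x^{1}\otimes\cdots\otimes x^{l-1})\leq g_{\mathcal{E}^{\circ}}(x^{1}\otimes\cdots\otimes x^{l-1}\otimes z^{l})=\prod_{i<l}\|x^{i}\|_{2}$, using exactly that $i_{z^{l}}$ preserves $\langle\cdot,\cdot\rangle_{H}$ because $\|z^{l}\|_{2}=1$, and then invokes Proposition \ref{prop:razonable cruzada version convexa}. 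Your version simply phrases the same two computations as set inclusions instead of Minkowski-functional (in)equalities.
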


\begin{proof}
Let $\left\langle\cdot,\cdot\right\rangle_{\mathcal{E}}$ be the scalar product associated to $\mathcal{E}$. From the definition of $\mathcal{E}_{z^l},$ we know that it is an ellipsoid. By $\left\langle \cdot,\cdot\right\rangle _{z^{l}},$ $g_{\mathcal{E}_{z^l}}\left(\cdot\right)$ we denote the scalar product and the Minkowski functional determined by $\mathcal{E}_{z^{l}}.$ Thus, for every $u\in\mathbb{R}^{d_{1}}\otimes\cdots\otimes\mathbb{R}^{d_{l-1}},$ we have $g_{\mathcal{E}_{z^l}}\left(u\right)=g_{\mathcal{E}}\left(i_{z^{l}}\left(u\right)\right).$
Since $\mathcal{E}$ is a tensorial ellipsoid, from Proposition \ref{prop:razonable cruzada version convexa}, $g_{\mathcal{E}_{z^l}}\left(x^{1}\otimes\cdots\otimes x^{l-1}\right) =\left\Vert x^{1}\right\Vert _{2}\cdots\left\Vert x^{l-1}\right\Vert _{2}.$
We also have:
\begin{align*}
g_{\left(\mathcal{E}_{z^{l}}\right)^{\circ}}\left(x^{1}\otimes\cdots\otimes x^{l-1}\right) &=\underset{g_{\mathcal{E}_{z^l}}\left(a\right)\leq1}{\sup}\left|\left\langle a,x^{1}\otimes\cdots\otimes x^{l-1}\right\rangle _{H}\right|\\
 &=\underset{g_{\mathcal{E}}\left(i_{z^{l}}\left(a\right)\right)\leq1}{\sup}\left|\left\langle i_{z^{l}}\left(a\right),x^{1}\otimes\cdots\otimes x^{l-1}\otimes z^{l}\right\rangle _{H}\right|\\
  & \leq g_{\mathcal{E}^{\circ}}\left(x^{1}\otimes\cdots\otimes x^{l-1}\otimes z^{l}\right)\\
  &=\left\Vert x^{1}\right\Vert _{2}\cdots\left\Vert x^{l-1}\right\Vert _{2}.
\end{align*}

Therefore, from Proposition \ref{prop:razonable cruzada version convexa}, we know $\mathcal{E}_{z^{l}}$ is a tensorial body w.r.t. $B_{2}^{d_{i}},$ $i=1,\ldots,l.$
\end{proof}

\begin{proof}
(of Theorem \ref{thm:caracterizacion elipsoides varios productos}) The proof will be divided into two parts. First, we will prove the theorem for tensorial ellipsoids in $\mathbb{R}^{m}\otimes\mathbb{R}^{n}.$ Then, for the general case we will use induction on $l,$ the number of factors on the tensor product $\otimes_{i=1}^l\mathbb{R}^{d_i}.$

\textit{Step 1}. Suppose $\mathcal{E}\subset\mathbb{R}^{m}\otimes\mathbb{R}^{n}$ is an ellipsoid such that $B_{2}^{m}\otimes_{\pi}B_{2}^{n}\subset\mathcal{E}\subset B_{2}^{m}\otimes_{\epsilon}B_{2}^{n}.$
If $\mathcal{E}=T\left(\mathbb{R}^{m}\otimes\mathbb{R}^{n}\right)$
for some linear isomorphism on $\mathbb{R}^{m}\otimes\mathbb{R}^{n},$
then from Proposition \ref{prop:elipsoides producto interno}, (\ref{eq: ellipsoid Tt and T-1})  holds for $T^{-1},T^t.$ Thus, for
$x,z\in\mathbb{R}^{m}$ and $y,w\in\mathbb{R}^{n}$ and
 $S=TT^{t}$ we have:
\begin{eqnarray*}
\left\langle x,z\right\rangle\left\langle y,w\right\rangle & = & \frac{\left\langle S^{-1}\left(x\otimes y\right),z\otimes w\right\rangle _{H}+\left\langle S^{-1}\left(x\otimes w\right),z\otimes y\right\rangle _{H}}{2},\\
\left\langle x,z\right\rangle\left\langle y,w\right\rangle & = & \frac{\left\langle S\left(x\otimes y\right),z\otimes w\right\rangle _{H}+\left\langle S\left(x\otimes w\right),z\otimes y\right\rangle _{H}}{2}.
\end{eqnarray*}

On the other hand, for the canonical basis $\left\{ e_{\sigma}\right\} _{\sigma=1,\ldots,d}\subseteq\mathbb{R}^{d},$
$\left\{ e_{i}\right\} _{i=1,\ldots,m}\subseteq\mathbb{R}^{m}$ and
$\left\{ e_{j}\right\} _{j=1,\ldots,n}\subseteq\mathbb{R}^{n}$ let
$\Phi_{\left(m,n\right)}:\mathbb{R}^{m}\otimes\mathbb{R}^{n}\rightarrow\mathbb{R}^{d}$
be the bijective map such that
$
\Phi_{\left(m,n\right)}\left(e_{i}\otimes e_{j}\right)=e_{(i-1)n+j}
$
 for $1\leq i\leq m$ and $1\leq j\leq n$. Clearly, $\Phi_{\left(m,n\right)}$
preserves $\left\langle \cdot,\cdot\right\rangle _{H}$
and the standard scalar product $\left\langle \cdot,\cdot\right\rangle$ on $\mathbb{R}^d.$
Hence if $\tilde{S}:=\Phi_{\left(m,n\right)}S\Phi^{-1}_{\left(m,n\right)}$ we have:
\begin{eqnarray*}
\left\langle e_{(i-1)n+j},e_{(k-1)n+l}\right\rangle & = & \frac{\left\langle \tilde{S}^{-1}e_{(i-1)n+j},e_{(k-1)n+l}\right\rangle +\left\langle \tilde{S}^{-1}e_{(i-1)n+l},e_{(k-1)n+j}\right\rangle }{2},\\
\left\langle e_{(i-1)n+j},e_{(k-1)n+l}\right\rangle  & = & \frac{\left\langle \tilde{S}e_{(i-1)n+j},e_{(k-1)n+l}\right\rangle +\left\langle \tilde{S}e_{(i-1)n+l},e_{(k-1)n+j}\right\rangle }{2},
\end{eqnarray*}
for $1\leq i,k\leq m$ and $1\leq j,l\leq n.$ Therefore, for $W=\tilde{S},\tilde{S}^{-1}:$
\begin{equation}
\label{eq:matrices theorem ellipsoids1}
\left\langle We_{(i-1)n+j},e_{(k-1)n+l}\right\rangle =\begin{cases}
1 & \text{if }k=i,l=j\\
0 & \text{if }k=i,l\neq j\\
0 & \text{if }k\neq i,l=j\\
-\left\langle We_{(i-1)n+l},e_{(k-1)n+j}\right\rangle  & \text{if }k\neq i,l\neq j
\end{cases}
\end{equation}
Hence, the positive definite matrices associated to $\tilde{S},\tilde{S}^{-1}$ can
be written using the matrices:  $A_{ki}:=\left(\left\langle \tilde{S}e_{(i-1)n+j},e_{(k-1)n+l}\right\rangle\right)_{l,j}$ and $B_{ki}:=\left(\left\langle\tilde{S}^{-1}e_{(i-1)n+j},e_{(k-1)n+l}\right\rangle\right)_{l,j}.$ That is, $\tilde{S}=\left(A_{ki}\right)_{k,i}$ and $\tilde{S}^{-1}=\left(B_{ki}\right)_{k,i}$ for $1\leq k,i\leq m.$  Clearly, $A_{ki},B_{ki}\in M_{n,n}\left(\mathbb{R}\right)$ for all $1\leq k,i\leq m.$ Moreover, from (\ref{eq:matrices theorem ellipsoids1}), it follows that $A_{ki}$ and $B_{ki},$ $k\neq i,$ are antysimmetric matrices and $A_{kk}=B_{kk}=I_n,$ $k=i$ ($I_{n}$ is the identity matrix).
From this and the symmetry of $\tilde{S},\tilde{S}^{-1},$ we know that $A_{ik}=-A_{ki}.$ Thus, $\tilde{S},\tilde{S}^{-1}$ satisfy  (\ref{eq:S is the identity matrix}) (see Lemma \ref{lem:BIG lema}
at the end of this section) and $\tilde{S}=I_{d}.$  The latter implies that  the linear isomorphism $T$ is such that $TT^t=I_{d},$ so it is an orthogonal map on $\otimes_{H,i=1}^{l}\mathbb{R}^{d_{i}}$ and  $\mathcal{E}=B_{2}^{m,n}.$ This finishes the first part of the proof.

\textit{Step 2.} As we mentioned at the beginning of the proof, this case will be proved by induction on the number $l$ of factors on the tensor product. To simplify the notation, in this part of the proof we use the symbol $\left\Vert \cdot\right\Vert _{Q}$ to denote  the Minkowski functional associated to a $0$-symmetric convex body $Q.$

The case $l=2$ was already proved. Now we assume that the result holds for $l-1.$ This means that for every tensorial ellipsoid $\mathcal{E}\subset\otimes_{i=1}^{l-1}\mathbb{R}^{d_{i}}$
such that
$
B_{2}^{d_{1}}\otimes_{\pi}\cdots\otimes_{\pi}B_{2}^{d_{l-1}}\subset\mathcal{E}\subset B_{2}^{d_{1}}\otimes_{\epsilon}\cdots\otimes_{\epsilon}B_{2}^{d_{l-1}},
$
we have $\mathcal{E}=B_{2}^{d_{1},\ldots,d_{l-1}}.$

Let $\mathcal{E}$ be a tensorial ellipsoid in $\otimes_{i=1}^{l}\mathbb{R}^{d_{i}}$
satisfying (\ref{eq:ecuacion elipsoides}), and let $\left\Vert \cdot\right\Vert _{\mathcal{E}}$ be its Minkowski functional. By Lemma \ref{lem:para pruema de inducion}, for
every $z^{l}\in\partial B_{2}^{d_{l}}$ we have
$
B_{2}^{d_{1}}\otimes_{\pi}\cdots\otimes_{\pi}B_{2}^{d_{l-1}}\subset\mathcal{E}_{z^{l}}\subset B_{2}^{d_{1}}\otimes_{\epsilon}\cdots\otimes_{\epsilon}B_{2}^{d_{l-1}}.
$
Applying the induction hypothesis we obtain $\mathcal{E}_{z^{l}}=B_{2}^{d_{1},\ldots,d_{l-1}}.$
Therefore, for every $\sum_{i=1}^{N}x_{i}^{1}\otimes\cdots\otimes x_{i}^{l-1}\in\otimes_{i=1}^{l-1}\mathbb{R}^{d_{i}}$
we have
\begin{align*}
\left\Vert {\sum}_{i=1}^{N}x_{i}^{1}\otimes\cdots\otimes x_{i}^{l-1}\otimes z^{l}\right\Vert _{\mathcal{E}} 
 & =\left\Vert{\sum}_{i=1}^{N}x_{i}^{1}\otimes\cdots\otimes x_{i}^{l-1}\right\Vert _{\mathcal{E}_{z^{l}}}\\
 & =\left\Vert{\sum}_{i=1}^{N}x_{i}^{1}\otimes\cdots\otimes x_{i}^{l-1}\right\Vert _{H}.
\end{align*}
Since, by Proposition \ref{prop:tensorial bodies closed for polars scalar mult}, $\mathcal{E}^{\circ}$ also satisfies (\ref{eq:ecuacion elipsoides}) ,
we also have
\[
\left\Vert {\sum}_{i=1}^{N}x_{i}^{1}\otimes\cdots\otimes x_{i}^{l-1}\otimes z^{l}\right\Vert _{\mathcal{E}^{\circ}}=\left\Vert {\sum}_{i=1}^{N}x_{i}^{1}\otimes\cdots\otimes x_{i}^{l-1}\right\Vert _{H}.
\]

Now,  consider the canonical isomorphism
\begin{align*}
\psi:\left(\mathbb{R}^{d_{1}}\otimes\cdots\otimes\mathbb{R}^{d_{l-1}}\right)\otimes\mathbb{R}^{d_{l}} & \rightarrow\mathbb{R}^{d_{1}}\otimes\cdots\otimes\mathbb{R}^{d_{l-1}}\otimes\mathbb{R}^{d_{l}}\\
\left(x_{i}^{1}\otimes\cdots\otimes x_{i}^{l-1}\right)\otimes x^{l} & \rightarrow x_{i}^{1}\otimes\cdots\otimes x_{i}^{l-1}\otimes x^{l},
\end{align*}
 and  denote by $\mathcal{\tilde{E}}$ the ellipsoid in $\left(\mathbb{R}^{d_{1}}\otimes\cdots\otimes\mathbb{R}^{d_{l-1}}\right)\otimes\mathbb{R}^{d_{l}}$
determined by this isomorphism and $\mathcal{E}$. Then, for each non-zero $x^{l}\in\mathbb{R}^{d_{l}},$ and $u=\sum_{i=1}^{N}x_{i}^{1}\otimes\cdots\otimes x_{i}^{l-1}\in\mathbb{R}^{d_{1}}\otimes\cdots\otimes\mathbb{R}^{d_{l-1}}$
we have
\begin{align*}
\left\Vert u\otimes x^{l}\right\Vert _{\mathcal{\tilde{E}}} & =\left\Vert{\sum}_{i=1}^{N}x_{i}^{1}\otimes\cdots\otimes x_{i}^{l-1}\otimes x^{l}\right\Vert _{\mathcal{E}}\\
 & =\left\Vert{\sum}_{i=1}^{N}x_{i}^{1}\otimes\cdots\otimes x_{i}^{l-1}\right\Vert _{H}\left\Vert x^{l}\right\Vert _{2}
 =\left\Vert u\right\Vert _{H}\left\Vert x^{l}\right\Vert _{2}.
\end{align*}
And
\begin{align*}
\left\Vert u\otimes x^{l}\right\Vert _{\tilde{\mathcal{E}}^{\circ}} & =\underset{a\in\tilde{\mathcal{E}}}{\sup}\left|\left\langle a,u\otimes x^{l}\right\rangle _{H}\right|\\
 & =\underset{\left\Vert \psi\left(a\right)\right\Vert _{\mathcal{E}}\leq1}{\sup}\left|\left\langle \psi\left(a\right),{\sum}_{i=1}^{N}x_{i}^{1}\otimes\cdots\otimes x_{i}^{l-1}\otimes x^{l}\right\rangle _{H}\right|\\
 & \leq\left\Vert{\sum}_{i=1}^{N}x_{i}^{1}\otimes\cdots\otimes x_{i}^{l-1}\otimes x^{l}\right\Vert _{\mathcal{E^{\circ}}}\\
 & =\left\Vert{\sum}_{i=1}^{N}x_{i}^{1}\otimes\cdots\otimes x_{i}^{l-1}\right\Vert _{H}\left\Vert x^{l}\right\Vert _{2}
 =\left\Vert u\right\Vert _{H}\left\Vert x^{l}\right\Vert_{2}.
\end{align*}
Thus, by Proposition \ref{prop:razonable cruzada version convexa}, $\mathcal{\tilde{E}}$ is a tensorial ellipsoid in $\left(\mathbb{R}^{d_{1}}\otimes\cdots\otimes\mathbb{R}^{d_{l-1}}\right)\otimes\mathbb{R}^{d_{l}},$ and $B^{d_1,\ldots,d_{l-1}}_2\otimes_{\pi} B_2^{d_l}\subset\mathcal{\tilde{E}}\subset B^{d_1,\ldots,d_{l-1}}_2\otimes_{\epsilon} B_2^{d_l}.$

 Now, let $d=d_{1}\cdots d_{l-1}.$ Then, by the identification $B^{d_1,\ldots,d_{l-1}}_2=B_2^d$ (see Subsection 2.1) and the case of  two factors proved in Step 1., we know that $\mathcal{\tilde{E}}=B_{2}^{d,d_l}.$
Finally, since $\mathcal{E}=\psi\left(\tilde{\mathcal{E}}\right)$ and $\psi$
is an orthogonal map, we have that $\mathcal{E}=B_{2}^{d_{1},\ldots,d_{l}}$ which is the desired result.
\end{proof}

The next lemma is used in the proof of Theorem \ref{thm:caracterizacion elipsoides varios productos}. For a given $n\in \mathbb{N}$, $I_{n}\in M_{n\times n}\left(\mathbb{R}\right)$  will denote the identity matrix of dimension $n.$
\begin{lem}
\label{lem:BIG lema}Let $m,n\in \mathbb{N}$ and  $d=mn.$
If $S\in M_{d\times d}\left(\mathbb{R}\right)$ is a positive definite matrix
and there exist antisymmetric matrices $A_{ki},B_{ki}\in M_{n,n}\left(\mathbb{R}\right),$ $k=1,\ldots,m-1,$ $i=k+1,\ldots,m$
such that
\begin{equation}
\label{eq:S is the identity matrix}
S=\begin{bmatrix}I_{n} & A_{12}  & \ldots & A_{1,m}\\
-A_{12} & I_{n}  & \ldots & A_{2,m}\\
\vdots & \vdots  & \ddots & \vdots\\
-A_{1,m} & -A_{2,m}  & \ldots & I_{n}
\end{bmatrix}, \,
S^{-1}=\begin{bmatrix}I_{n} & B_{12}  & \ldots & B_{1,m}\\
-B_{12} & I_{n} & \ldots & B_{2,m}\\
\vdots & \vdots  & \ddots & \vdots\\
-B_{1,m} & -B_{2,m} & \ldots & I_{n}
\end{bmatrix}.
\end{equation}
 Then $S=I_{d}.$
\end{lem}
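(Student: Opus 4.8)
The plan is to bypass the block arithmetic entirely and argue spectrally. The only input I would keep from the hypothesis is that $S$ is a real symmetric positive definite matrix (symmetry is automatic here, since the lower-triangular block $-A_{ki}$ of $S$ is exactly the transpose of the upper-triangular block $A_{ki}$, because each $A_{ki}$ is antisymmetric), and that both $S$ and $S^{-1}$ carry $I_n$ in every one of their $m$ diagonal blocks, so that $\mathrm{tr}(S)=\mathrm{tr}(S^{-1})=mn=d$.

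First I would pass to eigenvalues: write $\lambda_1,\dots,\lambda_d>0$ for the eigenvalues of $S$ (with multiplicity), so those of $S^{-1}$ are $\lambda_1^{-1},\dots,\lambda_d^{-1}$. The two trace identities become
\[
\sum_{i=1}^{d}\lambda_i=d\qquad\text{and}\qquad\sum_{i=1}^{d}\frac{1}{\lambda_i}=d.
\]
Then I would apply the Cauchy--Schwarz inequality to the vectors $(\sqrt{\lambda_i})_i$ and $(1/\sqrt{\lambda_i})_i$:
\[
d^{2}=\left(\sum_{i=1}^{d}\sqrt{\lambda_i}\cdot\frac{1}{\sqrt{\lambda_i}}\right)^{2}\le\left(\sum_{i=1}^{d}\lambda_i\right)\left(\sum_{i=1}^{d}\frac{1}{\lambda_i}\right)=d^{2}.
\]
Equality throughout forces $(\sqrt{\lambda_i})_i$ and $(1/\sqrt{\lambda_i})_i$ to be proportional, hence $\lambda_1=\cdots=\lambda_d$; combined with $\sum_i\lambda_i=d$ this gives $\lambda_i=1$ for all $i$. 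Since, by the spectral theorem, a real symmetric matrix all of whose eigenvalues equal $1$ is the identity, we conclude $S=I_d$.

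I do not expect a genuine obstacle: the main point is recognizing that the elaborate antisymmetric block pattern is a red herring, and that what actually does the work is the normalization $\mathrm{tr}(S)=\mathrm{tr}(S^{-1})=d$ coming from the $I_n$ diagonal blocks; positive definiteness is used only to guarantee that the $\lambda_i$ are positive reals, so that Cauchy--Schwarz (equivalently, the equality case of the AM--HM inequality applied to the $\lambda_i$) is available. A more computational alternative would be to induct on $m$ by a Schur-complement reduction of the last block row and column, verifying that the complement again has the stated shape; that route also works but is considerably more laborious, so I would not pursue it.
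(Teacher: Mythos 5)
Your proof is correct, and it is a genuinely different and considerably slicker argument than the one in the paper. The paper proceeds by induction on $m$ via a Schur-complement-style block computation: writing $S=\left[\begin{smallmatrix}E & F\\ F^{t} & I_n\end{smallmatrix}\right]$ and $S^{-1}=\left[\begin{smallmatrix}G & H\\ H^{t} & I_n\end{smallmatrix}\right]$, it extracts from $SS^{-1}=I_d$ the identities $F^{t}H=0$ and $H=-GF$, hence $F^{t}GF=0$; positive definiteness of $G$ then kills $F$ and $H$, and the inductive hypothesis applied to the pair $(E,G=E^{-1})$ finishes the step (the base case $m=2$ uses the antisymmetry of $A_{12}$ to deduce $A_{12}=0$ from $A_{12}^{2}=0$). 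Your route instead distills the hypothesis down to two scalar facts, $\operatorname{tr}(S)=\operatorname{tr}(S^{-1})=d$, coming solely from the $I_n$ diagonal blocks, and then invokes AM--HM (equivalently Cauchy--Schwarz) on the positive eigenvalues to force them all to equal $1$; symmetry plus unit spectrum gives $S=I_d$. This is shorter, avoids all block bookkeeping, and exposes what is really doing the work: the antisymmetry of the off-diagonal blocks is used only to guarantee $S=S^{t}$ (which is anyway built into the usual convention for ``positive definite''), and your argument in fact proves the stronger statement that any real symmetric positive definite $S$ with $\operatorname{tr}(S)=\operatorname{tr}(S^{-1})=d$ must be $I_d$. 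The one small point worth stating explicitly, which you do address, is that $S$ symmetric is needed before passing to a real orthogonal eigenbasis; beyond that the proof is complete.
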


\begin{proof}
Let $n\geq1$ be fixed. We will prove
the result by induction on $m.$

Step 1. If $m=1$, then $d=n$.  By definition of $S$, $S=I_d$ and the result is proved. Assume now that  $m=2.$ In this case,
\[
S=\begin{bmatrix}I_{n} & A_{12}\\
-A_{12} & I_{n}
\end{bmatrix}\text{ and }S^{-1}=\begin{bmatrix}I_{n} & B_{12}\\
-B_{12} & I_{n}.
\end{bmatrix}
\]
Then
\[
SS^{-1}=\begin{bmatrix}I_{n}-A_{12}B_{12} & A_{12}+B_{12}\\
-A_{12}-B_{12} & I_{n}-A_{12}B_{12}
\end{bmatrix}=\begin{bmatrix}I_{n} & 0\\
0 & I_{n}
\end{bmatrix}.
\]
Thus, $B_{12}=-A_{12}$ and $A_{12}^{2}=0.$ Since $A_{12}$ is antisymmetric,
the latter equality implies that $A_{12}^{t}A_{12}=0$ so $A_{12}=0$ which completes the proof.

Step 2. Assume the result is valid for $m-1.$ By $E,F,G,H$ we denote the following matrices:
\[
E:=\begin{bmatrix}I_{n} & A_{12} & \ldots & A_{1,m-1}\\
-A_{12} & I_{n} & \cdots & A_{2,m-1}\\
\vdots & \vdots & \ddots & \vdots\\
-A_{1,m-1} & -A_{2,m-1} & \cdots & I_{n}
\end{bmatrix}_{n(m-1),n(m-1)},F:=\begin{bmatrix}A_{1,m}\\
A_{2,m}\\
\vdots\\
A_{m-1,m}
\end{bmatrix}_{n(m-1),n},
\]

\[
G:=\begin{bmatrix}I_{n} & B_{12} & \ldots & B_{1,m-1}\\
-B_{12} & I_{n} & \cdots & B_{2,m-1}\\
\vdots & \vdots & \ddots & \vdots\\
-B_{1,m-1} & -B_{2,m-1} & \cdots & I_{n}
\end{bmatrix}_{n(m-1),n(m-1)},H:=\begin{bmatrix}B_{1,m}\\
B_{2,m}\\
\vdots\\
B_{m-1,m}
\end{bmatrix}_{n(m-1),n},
\]
Clearly, since $S$ is a positive definite matrix then $S^{-1},E,G$ are positive definite matrices. Also,
\[
S=\begin{bmatrix}E & F\\
F^{t} & I_{n}
\end{bmatrix}{\text{, }}S^{-1}=\begin{bmatrix}G & H\\
H^{t} & I_{n}
\end{bmatrix}
\]
and
\[
SS^{-1}=\begin{bmatrix}EG+FH^{t} & EH+FI_{n}\\
F^{t}G+I_{n}H^{t} & F^{t}H+I_{n}
\end{bmatrix}=\begin{bmatrix}I_{n(m-1)} & 0_{n(m-1),n}\\
0_{n,n(m-1)} & I_{n}
\end{bmatrix}.
\]
Therefore, $F^{t}H+I_{n}=I_{n}$ and $F^{t}H=0_{n,n}.$ Since we also have $F^{t}G+I_{n}H^{t}=0_{n,n(m-1)}$ then $H^{t}=-F^{t}G.$ This yields to
\begin{equation}
\label{eq:elacio h,f}
H=-GF.
\end{equation}

From the previous equations we get $0=F^{t}H=-F^{t}GF$ and
\begin{equation}
\label{eq:postivo definida}
F^{t}GF=0_{n,n}.
\end{equation}

Now, if we write $F_{i},$ $i=1,2,\ldots,n$ for the columns of $F,$
then from (\ref{eq:postivo definida}) we have
\begin{equation}
\label{eq:F es cero}
F_{i}^{t}GF_{i}=0.
\end{equation}
Since $G$ are positive definite matrix, from (\ref{eq:F es cero}) we know that each $F_{i}=0$,  $i=1,2,\ldots,n$  and $F=0.$ This and (\ref{eq:elacio h,f}) imply $H=0.$

Finally, we are in position to apply our inductive hypothesis to $E$ and $E^{-1}=G.$ Then, $E=I_{n(m-1)}$ which implies $S=I_{d}.$
\end{proof}


\end{document}